\newtheorem{theorem}{Theorem}[section]
\newtheorem{lemma}[theorem]{Lemma}
\newtheorem{corollary}[theorem]{Corollary}
\newtheorem{proposition}[theorem]{Proposition}
\theoremstyle{definition}
\newtheorem{definition}[theorem]{Definition}
\theoremstyle{remark}
\newtheorem{remark}[theorem]{Remark}
\numberwithin{equation}{section}
\newcommand{\bbN}{{\mathbb{N}}}
\newcommand{\bbR}{{\mathbb{R}}}
\newcommand{\bfJ}{{\mathbf{J}}}
\newcommand{\bbC}{{\mathbb{C}}}
\newcommand{\cB}{{\mathcal{B}}}
\newcommand{\bbJ}{{\mathbb{J}}}
\newcommand{\cR}{{\mathcal{R}}}
\newcommand{\calB}{{\mathcal{B}}}
\newcommand{\calU}{{\mathcal{U}}}
\newcommand{\ol}{\overline}
\newcommand{\linspan}{{\text{span}}}
\newcommand{\Tr}{{\text{Tr}}}
\newcommand{\vspan}{{\textrm{Span}}}
\newcommand{\ran}{{\textrm{ran}}}
\newcommand{\calR}{\mathcal{R}}
\newcommand{\calO}{\mathcal{O}}
\renewcommand{\Im}{\text{\rm Im\ }}
\allowdisplaybreaks \numberwithin{equation}{section}
\begin{document}
\date{\today}
\title[A Weyl matrix perspective]{A Weyl matrix perspective on Unbounded Non-Self-Adjoint Jacobi Matrices}

\author[B.\ Eichinger]{Benjamin Eichinger}

\address{School of Mathematical Science, Lancaster University, Lancaster LA1 4YF, United Kingdom}

\email{b.eichinger@lancaster.ac.uk}

\thanks{B.\ E.\ was supported by the Austrian Science Fund FWF, project no: P33885}

\author[M.\ Luki\'c]{Milivoje Luki\'c}

\address{Department of Mathematics, Rice University, Houston, TX~77005, USA}

\email{milivoje.lukic@rice.edu}

\thanks{M. L.\ was supported in part by NSF grant DMS--2154563.}

\author[G.\ Young]{Giorgio Young}
\address{Department of Mathematics,
  The University of Michigan,
 Ann Arbor, MI 48109, USA}
  \email[]{gfyoung@umich.edu}
  \thanks{G.Y.\  was supported by the NSF through grant DMS--2303363.}

\maketitle

%    General info
%\subjclass[2020]{Primary 54C40, 14E20; Secondary 46E25, 20C20}

%\date{January 1, 2001 and, in revised form, June 22, 2001.}

%\dedicatory{This paper is dedicated to our advisors.}

%\keywords{Differential geometry, algebraic geometry}

\begin{abstract}
A new way of encoding a non-self-adjoint Jacobi matrix $J$ by a spectral measure of $|J|$  together with a phase function was described by Pushnitski--\v Stampach in the bounded case. We present another perspective on this correspondence, based on Weyl functions instead of moments, which simplifies some proofs and generalizes the correspondence to the unbounded case. In particular, we find a bijection between proper Jacobi matrices with positive off-diagonal elements, and a class of spectral data. We prove that this mapping is continuous in a suitable sense. To prove injectivity of the map, we prove a local Borg--Marchenko theorem for unbounded non-self-adjoint Jacobi matrices in this class that may be of independent interest. 
\end{abstract}

\section{Introduction}
Half-line Jacobi matrices 
\begin{align*}
J = \begin{pmatrix}
b_0& a_0&0&&&&\\
a_0& b_1&a_1&0&&&\\
0&a_1&b_2&a_2&0&&\\
&\ddots&\ddots&\ddots&\ddots&\ddots\\
\end{pmatrix}
\end{align*}
are most often studied in the formally self-adjoint setting $a_n > 0$ and $b_n \in \bbR$, and a central object in their spectral theory is the canonical spectral measure associated with the cyclic vector $\delta_0$. By a classical result known as Favard's theorem, this is a bijective correspondence between  bounded self-adjoint Jacobi matrices and compactly supported probability measures on $\bbR$.

Pushnitski--\v Stampach \cite{PushnitskiStampach} studied the more general setting of bounded \emph{non-self-adjoint} Jacobi matrices, with coefficients $a_n > 0$ and $b_n \in \bbC$, and described a bijective correspondence with a new kind of spectral data for $J$: a pair $(\nu,\psi)$ where $\nu$ is a spectral measure for $\lvert J \rvert$ and $\psi$ a phase function. More precisely, $\nu$ is a positive measure on $[0,+\infty)$ and $\psi \in L^\infty(\nu)$ a function with $\lvert \psi \rvert \le 1$ $\nu$-a.e.\ such that 
\begin{align}
\langle  \delta_0, f(|J|) \delta_0\rangle & =\int\limits_{[0,+\infty)} f \,d\nu,  \qquad \forall f \in \cB_b(\bbR), \label{eq:nudef} \\
\langle \delta_0,Jf(|J|)\delta_0\rangle & =\int\limits_{[0,+\infty)} sf(s)\psi(s)d\nu(s),  \qquad \forall f \in \cB_b(\bbR). \label{eq:phasedef}
\end{align}
We denote the set of bounded Borel functions on $\bbR$ by $\cB_b(\bbR)$ and use the convention that inner products are linear in the second parameter. Since $s\psi(s)$ appears in the integral \eqref{eq:phasedef}, the value $\psi(0)$ is arbitrary, and to ensure uniqueness, a value of $\psi(0)$ must be fixed. Since we will make use of the odd extension of $\psi$ to $\bbR$, we find it natural to set $\psi(0)=0$. The same authors studied this setting from the perspective of symmetric anti-linear operators in \cite{PushStamp}.

In this paper, we study unbounded, non-self-adjoint Jacobi matrices, with Jacobi parameters $a=(a_n)_{n\in \bbN_0}\in (0,\infty)^{\bbN_0}$ and $b=(b_n)_{n\in\bbN_0}\in \bbC^{\bbN_0}$. We emphasize that the diagonal parameters $b_n$ are complex valued, and that the coefficient sequences are not assumed to be bounded. Accordingly, $J$ is an unbounded operator on $\ell^2(\bbN_0)$, which we take to act on the maximal domain
\[
D(J) =  \left\{ u \in \ell^2(\bbN_0) \mid \sum_{n=1}^\infty |a_{n-1}u_{n-1}+b_nu_n+a_nu_{n+1}|^2<\infty\right \}.
\]
This is commonly called the maximal operator. We will also denote it by $J(a,b)$ when we need to emphasize the Jacobi parameters $a,b$.  Let us denote by $\ell^2_c(\bbN_0)$ the set of compactly supported sequences in $\ell^2(\bbN_0)$ and denote by $J_0 = J_0(a,b)$ the restriction of the operator $J = J(a,b)$ to the domain $D(J_0) = \ell^2_c(\bbN_0)$. Its closure $\ol{J_0}$ is commonly called the minimal operator.

In \cite{Beckermann}, Beckermann introduced the class of proper complex Jacobi matrices with coefficients $a_n,b_n\in \bbC, a_n\ne 0$, and studied their connections to formal othogonal polynomials: a matrix is said to be proper if $J = \ol{J_0}$. 
%Through conjugation by a diagonal unitary operator, any Jacobi matrix may be brought to a Jacobi matrix with $a_n > 0$, thus, as is common in the self-adjoint setting, we restrict our attention to Jacobi matrices with positive off-diagonal entries.  Denoting by $J_0(a,b)$ the restriction of $J(a,b)$ to the domain of compactly supported sequences, Beckermann's definition of a proper Jacobi matrix is that $\ol{J_0(a,b)}=J(a,b)$. We show in Corollary~\ref{cor:proper} that a Jacobi matrix with $a_n > 0$ is proper in the sense of Beckermann if and only if $J(a,b)^* = J(a,b^*)$, and we will take this as our definition of proper throughout this manuscript. 
% In this manuscript, we will say that $J = J(a,b)$ is proper if $J(a,b)^* = J(a,b^*)$. While our definition differs from Beckermann's, we show in Corollary~\ref{cor:proper} that it is equivalent to Beckermann's for Jacobi matrices with positive off-diagonal entries.
 We also note that in the formally self-adjoint setting $a_n > 0$, $b_n\in \mathbb R$, a Jacobi matrix is proper if and only if it is limit-point, making operators in this class a natural point of departure from the perspective of direct spectral theory, as well as moment problems, since in the self-adjoint setting being limit point corresponds to determinacy of the associated moment problem, cf. \cite{simon2010szegHo}. We also note that Beckermann and Smirnova \cite{Beckermann_Smirnova_2004} shows that a complex Jacobi matrix is proper if and only if it corresponds to a determinate moment problem in a suitably generalized sense. While this notion of determinacy will not be relevant to us here, it may serve to further motivate the study of proper Jacobi matrices as a point of departure in the unbounded setting. Furthermore, in Section 2, we find that a sufficient condition for being proper is that $a\in\ell^\infty(\bbN_0)$; in particular, this condition is satisfied for discrete Schr\"odinger operators with complex, unbounded potentials.

The operator $J$ is closed and densely defined, so by von Neumann's theorem, $J^*J$ is self-adjoint on the natural domain $\{u \in D(J) \mid Ju \in D(J^*) \}$. We denote by
\[
\lvert J \rvert = \sqrt{ J^* J}
\]
its (positive) square root, which is self-adjoint; in particular, the Borel functional calculus in \eqref{eq:nudef}, \eqref{eq:phasedef} is still well-defined. The following theorem constructs the direct spectral map; it is a generalization of \cite[Theorem 2.2]{PushnitskiStampach}. The proof follows similarly, with some extra care taken regarding domains.

\begin{theorem}\label{thm:2.2}
Let $a=(a_n)_{n\in \bbN_0}\in (0,\infty)^{\bbN_0}$ and $b=(b_n)_{n\in\bbN_0}\in \bbC^{\bbN_0}$ and suppose $J(a,b)$ is proper. There is a unique $\nu$ on $[0,+\infty)$ such that \eqref{eq:nudef} holds. There exists $\psi\in L^\infty(\nu)$ with norm $\leq 1$ (as element of $L^\infty(\nu)$) such that \eqref{eq:phasedef} holds. After normalizing by $\psi(0)=0$, $\psi$ is unique. 
\end{theorem}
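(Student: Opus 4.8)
The plan is to realize $\nu$ as a spectral measure, to encode the off-diagonal data $\langle\delta_0,Jf(|J|)\delta_0\rangle$ as a complex measure absolutely continuous with respect to $\nu$, and to extract $\psi$ as the corresponding Radon--Nikodym density. Let $P(\cdot)$ denote the projection-valued spectral measure of the self-adjoint operator $|J|$, supported on $[0,\infty)$, and set $\nu(A)=\langle\delta_0,P(A)\delta_0\rangle$. This is a probability measure (total mass $\|\delta_0\|^2=1$), it satisfies \eqref{eq:nudef} by the Borel functional calculus, and it is unique because a finite measure on $[0,\infty)$ is determined by its integrals against $\cB_b(\bbR)$. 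A point worth isolating up front, since it governs the domain questions in \eqref{eq:phasedef}, is that $\nu$ has finite moments of all orders: because $J$ and $J^*$ are tridiagonal, every word in $J,J^*$ applied to the finitely supported vector $\delta_0$ is again finitely supported, so $\delta_0\in D((J^*J)^k)=D(|J|^{2k})$ for all $k$, whence $\int s^{2k}\,d\nu=\||J|^{k}\delta_0\|^2<\infty$. Consequently, for bounded Borel $f$ we have $\int s^2|f(s)|^2\,d\nu<\infty$, so $f(|J|)\delta_0\in D(|J|)=D(J)$ and the left-hand side of \eqref{eq:phasedef} is well defined.

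Next I would invoke von Neumann's polar decomposition $J=U|J|$, with $U$ a partial isometry of initial space $\ol{\ran|J|}$ and final space $\ol{\ran J}$, and $D(J)=D(|J|)$. For bounded Borel $f$, writing $g(s)=sf(s)\in L^2(\nu)$, one has
\[
\langle\delta_0,Jf(|J|)\delta_0\rangle=\langle U^*\delta_0,|J|f(|J|)\delta_0\rangle=\int_{[0,\infty)}sf(s)\,d\rho(s),
\]
where $\rho(A):=\langle U^*\delta_0,P(A)\delta_0\rangle$ is a finite complex measure; note $\rho(\{0\})=0$, since $U^*\delta_0\in\ol{\ran|J|}=\ran P((0,\infty))$. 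Thus \eqref{eq:phasedef} reduces to showing $\rho\ll\nu$ with density $\psi:=d\rho/d\nu$ satisfying $|\psi|\le1$ $\nu$-a.e., after which we set $\psi(0)=0$.

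The heart of the matter is the sharp bound $|\psi|\le1$, as a naive estimate only yields $\psi\in L^2(\nu)$. Positivity of $P(A)$ gives, by Cauchy--Schwarz, $|\rho(A)|\le\sigma(A)^{1/2}\nu(A)^{1/2}$ with $\sigma(A)=\|P(A)U^*\delta_0\|^2$, so it suffices to identify $\sigma$ with $\nu$ on $(0,\infty)$; here the reflection symmetry of Jacobi matrices is decisive. Let $C$ be entrywise complex conjugation on $\ell^2$. Since the $a_n$ are real and $J$ is proper, $J^*=CJC$, hence $|J^*|=C|J|C$ and the spectral measure $\tilde P$ of $|J^*|$ obeys $\tilde P(A)=CP(A)C$; moreover $U$ intertwines $|J|$ and $|J^*|$, giving $\tilde P(A)=UP(A)U^*$ for bounded $A\subseteq(0,\infty)$. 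Combining these, for such $A$,
\[
\sigma(A)=\langle\delta_0,UP(A)U^*\delta_0\rangle=\langle\delta_0,\tilde P(A)\delta_0\rangle=\langle\delta_0,CP(A)C\delta_0\rangle=\ol{\langle\delta_0,P(A)\delta_0\rangle}=\nu(A),
\]
using $C\delta_0=\delta_0$, the identity $\langle\delta_0,Cw\rangle=\ol{\langle\delta_0,w\rangle}$, and $\nu(A)\in\bbR$. Therefore $|\rho(A)|\le\nu(A)$ for all bounded Borel $A\subseteq(0,\infty)$, hence (by inner regularity and summing over partitions) $|\rho|\le\nu$, so $\rho\ll\nu$ and $|\psi|=d|\rho|/d\nu\le1$ $\nu$-a.e.

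Uniqueness of $\psi$ then follows by testing \eqref{eq:phasedef} against $f=\mathbf 1_A$ for bounded $A\subseteq(0,\infty)$: if $\psi_1,\psi_2$ both work, then $\int_A s(\psi_1-\psi_2)\,d\nu=0$ for all such $A$, forcing $\psi_1=\psi_2$ $\nu$-a.e.\ on $(0,\infty)$, while the convention $\psi(0)=0$ fixes the possible atom at $0$. I expect the main obstacle to be the passage to the unbounded, non-self-adjoint setting: one must justify the operator identities $J^*=CJC$, $|J^*|=C|J|C$, and especially the polar intertwining $\tilde P(A)=UP(A)U^*$ with care for domains and for the partial-isometry defect of $U$ at $0$ — this is where properness of $J$ is used, and where the ``extra care regarding domains'' is concentrated. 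Establishing the finiteness of all moments of $\nu$ at the outset is what renders \eqref{eq:phasedef} meaningful for every $f\in\cB_b(\bbR)$ in the unbounded case.
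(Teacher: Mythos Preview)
Your argument is correct, and it reaches the same conclusion through a genuinely different route. The paper proves the bound $|\langle\delta_0,Jf(|J|)\delta_0\rangle|\le\int s|f(s)|\,d\nu$ for $f\in C_c(0,\infty)$ by the factorization $f=hg$ with $h(s)=s^{-1/2}|f(s)|^{1/2}$, $g(s)=s^{1/2}f(s)^{1/2}$, then uses the intertwining identity $Jg(|J|)=g(|J^*|)J$ (their Lemma~\ref{lem:BorelJJstar}) together with Cauchy--Schwarz and the conjugation symmetry $C|J|C=|J^*|$ to bound both resulting factors by $(\int s|f|\,d\nu)^{1/2}$; the existence of $\psi$ then falls out of $L^1$--$L^\infty$ duality. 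You instead go through the polar decomposition $J=U|J|$, build the complex measure $\rho(A)=\langle U^*\delta_0,P(A)\delta_0\rangle$ directly, and extract $\psi$ as its Radon--Nikodym derivative, using the same conjugation symmetry (now in the form $\tilde P(A)=CP(A)C$) combined with the polar intertwining $\tilde P(A)=UP(A)U^*$ on $(0,\infty)$ to get the sharp pointwise bound $|\rho|\le\nu$. The underlying ingredients---Cauchy--Schwarz, the anti-unitary equivalence $|J^*|=C|J|C$, and an intertwining between functions of $|J|$ and $|J^*|$---are the same, but your packaging is more measure-theoretic and gives $\psi$ explicitly rather than via duality; the paper's version avoids invoking the polar decomposition and the attendant partial-isometry bookkeeping at $0$. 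Your identification of $\tilde P(A)=UP(A)U^*$ for $A\subset(0,\infty)$ is exactly the point that needs the domain care you flag: it follows from $|J^*|^2=U|J|^2U^*$ together with the fact that $U$ restricts to a unitary between $(\ker|J|)^\perp$ and $(\ker|J^*|)^\perp$, which is straightforward but should be spelled out.
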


Given a pair $(\nu, \psi)$ as above, we denote by $\psi_{o}$ the odd extension of the function $\psi:[0,+\infty)\to \bbC$ 
to the real line, and $\nu_e$ the even extension of the half-line probability measure $\nu$ to the full-line: the unique measure which is even and whose pushforward under $x\mapsto \lvert x \rvert$ is $\nu$. Then we consider the matrix-valued measure $\mu$ given by
\begin{align}\label{eq:measureform}
d\mu = \begin{pmatrix}
1&\psi_{o} \\\psi_{o}^*&1
\end{pmatrix}d\nu_{e}
\end{align}
for $f^*$ denoting the complex conjugate of $f$, and similarly for elementwise conjugation of a sequence in $\bbC^{\bbN_0}$. We will always assume that $\mu$ is related to $(\nu,\psi)$ in this way; in particular, $\mu$ is always a matrix measure of the form \eqref{eq:measureform}, with the even/odd symmetries described here. We will refer to the pair $(\nu,\psi)$, or the matrix measure $\mu$, as the spectral data of $J$.

In order to state our main theorem, we will need to define a subclass of non-degenerate and determinate matrix measures. A matrix measure $\tilde \mu$ with all moments finite is called determinate if it is uniquely determined by its moments, and non-degenerate if any matrix polynomial $P(\cdot)\in \bbC^{2\times 2}$ vanishing under the matrix inner product defined by $\tilde \mu$ is trivial,
\begin{align}\label{eq:nondeg}
\int\limits_\bbR P(x)^*d\tilde \mu(x) P(x)=0\implies P(x)= 0\quad \forall x\in\bbR.
\end{align}
A sufficient condition for determinacy is exponential decay of tails, we offer a proof of this in Lemma~\ref{prop:determinacy}. More generally, determinacy of the trace measure is sufficient, but not necessary for determinacy of the matrix measure, cf. \cite[Corollary 3.7]{berg2008matrix} and the ensuing discussion. Non-degeneracy is equivalent to the existence of a sequence of sequence $(P_n)_{n\geq 0}$ of orthonormal polynomials \cite[Theorem 4.6]{berg2008matrix}. By \cite[Lemma 6.1]{PushnitskiStampach}, a matrix measure of the form \eqref{eq:measureform} for $\nu_e$ of infinite support is always non-degenerate.

From a non-degenerate measure with all moments finite, one may form a block Jacobi matrix by assembling the coefficients appearing in the three term recurrence satisfied by a sequence of orthonormal polynomials into a tri-diagonal block matrix $\bbJ$, which may be viewed as an operator on the domain of compactly supported $\ell^2(\bbN_0)$ sequences. Building on earlier results \cite{Krein49mm}, Krein's work \cite{Krein49} shows that $\bbJ$ constructed this way has at least one deficiency index $0$ if and only if $\mu$ is determinate, see \cite[Theorem 3]{krein2016infinite} for an english translation. We define completely determinate measures to be the subset of non-degenerate measures with all moments finite that produce $\bbJ$ with both deficiency indices $0$. We note that this notion is well-defined, since different $\bbJ$ matrices corresponding to different orthonormal sequences differ by conjugation by a diagonal unitary operator, leaving the deficiency indices unchanged. 

\begin{definition}
Let $\tilde \mu$ be a non-degenerate matrix measure with all moments finite. We say $\tilde \mu$ is completely determinate if the associated block Jacobi matrices have deficiency indices both equal to $0$. 
\end{definition}
\begin{remark}\mbox{\,}
\begin{enumerate}
\item Our choice of terminology is inspired by complete determinacy being both a strengthening of the criterion for determinacy following from Krein's work, making this class a subset of the set of determinate non-degenerate matrix measures, as well as the already established definition of ``complete indeterminacy," where both deficiency indices are maximized, cf. \cite{guardeno2001matrix}. 
\item As discussed below \cite[Theorem 2]{krein2016infinite}, reality of the coefficients of $\bbJ$ is sufficient to conclude equality of the deficiency indices. This is by Von Neumann's theorem \cite[Theorem X.3]{ReedSimon2}. Generally, by the same argument, determinacy combined with a conjugation symmetry of the associated $\bbJ$ is sufficient to ensure equality of the deficiency indices at $0$ and complete determinacy. 
\end{enumerate}
\end{remark}

We may now state our main theorem, proving that conditions \eqref{eq:nudef}, \eqref{eq:phasedef}, \eqref{eq:measureform} produce a bijection between proper Jacobi operators $J$ and completely determinate matrix measures $\mu$ of the form \eqref{eq:measureform}.

\begin{theorem}
\label{thm:inverse}
The correspondence between $J$ and its spectral data $\mu$ determined by \eqref{eq:nudef}, \eqref{eq:phasedef}, \eqref{eq:measureform} is a bijection between the set of proper Jacobi matrices $J(a,b)$, with all $a_n>0, b_n\in \bbC$ and the set of completely determinate matrix measures $\mu$ of the form \eqref{eq:measureform}.
\end{theorem}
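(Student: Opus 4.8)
The forward map $J\mapsto\mu$ is already supplied by Theorem~\ref{thm:2.2} together with \eqref{eq:measureform}, so the content of the theorem is that this map takes values among completely determinate measures of the form \eqref{eq:measureform} and is a bijection onto that set. The plan is to organize everything around the self-adjoint \emph{doubling}
\begin{align*}
\cA = \begin{pmatrix} 0 & J \\ J^* & 0\end{pmatrix}
\end{align*}
on $\ell^2(\bbN_0)\oplus\ell^2(\bbN_0)$, together with $e_1 = \delta_0\oplus 0$ and $e_2 = 0\oplus\delta_0$. A short computation using \eqref{eq:nudef}, \eqref{eq:phasedef}, $\cA^2 = \diag(JJ^*, J^*J)$, and the conjugation identity $J^* = CJC$ (with $C$ entrywise complex conjugation, so $C\delta_0=\delta_0$) shows that $\mu$ is exactly the spectral matrix measure of $\cA$ relative to $\{e_1,e_2\}$: the moments $\langle e_i, \cA^k e_j\rangle$ reproduce the diagonal even moments $\int s^{2j}d\nu$ and the off-diagonal odd moments $\int s^{2j+1}\psi\,d\nu$. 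This identifies the cyclic part of $\cA$ with multiplication by $x$ on $L^2(\mu)$, hence with the block Jacobi matrix $\bbJ$ built from the orthonormal polynomials of $\mu$, while the scalar $J$ is recovered as the off-diagonal block of $\cA$. I would use this scalar/matrix bridge throughout.

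First I would check the image lands in the target. Finiteness of all moments of $\mu$ is automatic: since $a_n>0$ and $b_n\in\bbC$, both $J$ and $J^*=CJC$ map $\ell^2_c(\bbN_0)$ into itself, so $\delta_0\in\bigcap_k D((J^*J)^k)$ and every moment $\langle e_i,\cA^k e_j\rangle$ is a finite sum. Non-degeneracy holds by \cite[Lemma 6.1]{PushnitskiStampach}, since the vectors $(J^*J)^k\delta_0$ are linearly independent (as $a_n>0$), forcing $\nu_e$ to have infinite support. For complete determinacy I would relate the deficiency indices of $\bbJ$ to those of the minimal doubling $\cA_0$ (the closure of $\cA$ on $\ell^2_c\oplus\ell^2_c$): solving $\cA_{\max}(u\oplus v)=\pm i(u\oplus v)$ reduces to $(J^*J+1)$- and $(JJ^*+1)$-type conditions in the maximal sense, and I would show these vanish precisely when $J=\ol{J_0}$ is proper. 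The conjugation symmetry of $\mu$ built into the form \eqref{eq:measureform} (which, as noted in the Remark, upgrades one vanishing deficiency index to both) then yields complete determinacy.

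For injectivity I would invoke the local Borg--Marchenko theorem proved in the paper: the measure $\mu$ determines the $2\times2$ Weyl matrix function associated with $\mu$ (equivalently with $\cA$), whose asymptotic expansion determines the Jacobi parameters $(a_n,b_n)$, and these in turn determine the maximal operator $J=J(a,b)$. (Alternatively, since all moments are finite, one can extract $(a_n,b_n)$ directly by the usual recursion: $b_0=\int s\psi\,d\nu$, then $a_0^2=\int s^2\,d\nu-\abs{b_0}^2$, and so on, the positivity $a_n>0$ fixing each square root; the Weyl-function route is cleaner and uniform in the unbounded case.) For surjectivity I would start from a completely determinate $\mu$ of the form \eqref{eq:measureform}, form the self-adjoint block Jacobi matrix $\bbJ\cong M_x$ on $L^2(\mu)$, and fold it using the reflection symmetry $d\mu(-x)=\diag(1,-1)\,d\mu(x)\,\diag(1,-1)$: the unitary involution $(Vf)(x)=\diag(1,-1)f(-x)$ anticommutes with $M_x$, so $M_x$ is block off-diagonal in the $\pm1$ eigenspaces of $V$, and its off-diagonal block, expressed in the basis of even/odd orthonormal polynomials, is a scalar tridiagonal operator $J$ with $a_n>0$ (after a diagonal gauge) and $J^*=CJC$. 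Complete determinacy makes $\cA$ self-adjoint, which forces this $J$ to be proper, and determinacy guarantees that the spectral data of $J$ reproduce $\mu$, closing the loop via the uniqueness in Theorem~\ref{thm:2.2}.

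The main obstacle is the equivalence \emph{$J$ proper $\iff$ $\mu$ completely determinate}, together with the attendant bookkeeping of domains for the unbounded, non-self-adjoint doubling $\cA$: one must carefully match the minimal and maximal domains of $J$ and $J^*$ with those of $\cA$ and of $\bbJ$, and track how a single vanishing deficiency index (Krein's determinacy criterion) interacts with the conjugation symmetry to give both indices zero. A secondary difficulty is verifying that the folded off-diagonal block is genuinely tridiagonal with strictly positive off-diagonal entries, which requires the even/odd grading of the orthonormal matrix polynomials and a correct gauge normalization, and the determinacy-based round-trip in surjectivity, ensuring the reconstructed $J$ reproduces the original $\mu$ rather than merely sharing its moments.
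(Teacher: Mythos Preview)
Your overall architecture matches the paper's: the self-adjoint doubling $\cA=\bfJ$, identification of $\mu$ as the spectral matrix measure of $\bfJ$ for $\{\delta_0\oplus 0,\,0\oplus\delta_0\}$ (the paper packages this as Proposition~\ref{prop:RMrelationship}), non-degeneracy via \cite[Lemma~6.1]{PushnitskiStampach}, and injectivity via the local Borg--Marchenko Theorem~\ref{thm:LocalBorgMarchenko}. The substantive divergence is in surjectivity. The paper does \emph{not} use your reflection/folding argument on $L^2(\mu)$; instead it builds a block Jacobi matrix $\bbJ$ from the right-orthonormal polynomials of $\mu$, then runs an \emph{inductive normalization by coefficient stripping}: Lemma~\ref{lem:blocks} reads off from the symmetric form of $\calR$ that $B_0$ already has the shape \eqref{eq:blockform} and $A_0$ has it up to a right unitary factor $U_1$, Proposition~\ref{prop:stripping} shows the stripped Weyl matrix $\calR_1$ is again symmetric, and one absorbs $U_1$ into a block-diagonal unitary $W$ and iterates. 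This gives a completely explicit mechanism for the step you flag as a ``secondary difficulty,'' namely that the off-diagonal block is scalar tridiagonal with $a_n>0$. Your anticommutation $VM_x=-M_xV$ is correct and conceptually appealing, but the claim that the off-diagonal block becomes tridiagonal ``in the basis of even/odd orthonormal polynomials'' hides the same normalization issue: the matrix orthonormal polynomials need not respect the $\sigma_3$-grading without a gauge choice, and producing that gauge is exactly what the paper's Lemma~\ref{lem:blocks}/Proposition~\ref{prop:stripping} loop accomplishes. For the equivalence ``$J$ proper $\Leftrightarrow$ $\mu$ completely determinate,'' the paper's route is shorter than your proposed deficiency-index computation for $\cA_{\max}$: since $J$ proper means $\overline{\bfJ_0}=\bfJ$ is self-adjoint (opening of Section~\ref{section:injectivity}), $\bbJ_0=V^*\bfJ_0 V$ is essentially self-adjoint, giving both indices zero at once; conversely, essential self-adjointness of $\bbJ_0$ transports back through $V$ to give $\overline{\bfJ_0}=\bfJ_0^*=\bfJ$, which forces $\overline{J_0(a,b)}=J(a,b)$ by projecting onto one summand.
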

\begin{remark}
As discussed above, measures of the form \eqref{eq:measureform} are non-degenerate provided the trace measure has infinite (in the sense of cardinality) support \cite[Lemma 6.1]{PushnitskiStampach}. 
%\item As noted in \cite{PushnitskiStampach}, the condition $a_n>0$ is a normalization condition, and this bijection may be found between the set of proper Jacobi matrices with $a_n\in \bbC\setminus \{0\}$ and prescribed arguments and spectral data $(\nu,\psi)$. Since this is we are mainly interested in the case of traditional Jacobi matrices with complex potentials, we offer a sketch of the necessary modifications after the proof of this theorem.
\end{remark}

This extends a bijection of Pushnitski--\v Stampach \cite{PushnitskiStampach}, who considered bounded $J$ and compactly supported $\nu$. We follow the central idea of their paper, to embed $J$ into a self-adjoint block operator.  However, our proofs are different, as they rely on matrix Weyl functions and the theory of Herglotz functions instead of moments. This allows us to offer some additional intuition for the phase function $\psi$. From our perspective, $\psi$ appears as one of the off-diagonal entries of a matrix probability measure that arises as the canonical spectral measure for a block Jacobi matrix unitarily equivalent to the self-adjoint block matrix formed through $J$. Such measures may be written as $d\mu=W(\cdot) d\nu_e(\cdot)$ with $W(\cdot)\in \bbC^{2\times 2}$ having constant trace $\nu_e$ almost everywhere, with $W\geq 0$ forcing $|\psi|\leq 1$. We compute that the diagonals of $W$ are equal, meaning that only $\nu_e$ and $\psi$ are needed to parametrize $\mu$; and $\nu_e$ is precisely the even extension of $\nu$, the spectral measure for $|J|$ described above.

To define the fundamental object in our work, we will use the following pair of vectors:
\[
\mathbf{e}_0:=\delta_0,\;\mathbf{e}_1:=J^*\delta_0,
\]
which is a cyclic set for $J^* J$ (see Theorem~\ref{thm: 2.1}). We use the notation 
\begin{align*}
R_{A}(z)=(A-z)^{-1}
\end{align*}
for the resolvent of an operator $A$ at the point $z\in \bbC$. We encode $J$ by the $2\times 2$ matrix % $M(z) = (M_{ij}(z))_{i,j=0}^1$,
\begin{align}\label{eq:distortedM}
M(z)  = (M_{i,j}(z))_{i,j=0}^1, \quad  %\in \bbC^{2\times 2},\;
M_{i,j}(z)=\langle \mathbf{e}_j,R_{J^*J}(z)\mathbf{e}_i\rangle.
\end{align}
This is a matrix Herglotz function; just as the Weyl $m$-function is a basic tool for self-adjoint Jacobi matrices, the Weyl matrix $M$ is a central object in this paper. We prove the following counterpart of a local Borg--Marchenko theorem for unbounded non-self-adjoint Jacobi matrices. This theorem will be used to prove the injectivity portion of Theorem~\ref{thm:inverse}. For $w\in \bbC\setminus [0,+\infty)$, we abbreviate the matrix exponential by
\begin{align*}
w^{\frac{\sigma_3}{4}}=\begin{pmatrix} w^{1/4} & 0 \\  0 & 1/w^{1/4} \end{pmatrix}, \quad \arg(w^{1/4})\in (0,\pi/2).
\end{align*}

\begin{theorem}\label{thm:LocalBorgMarchenko}
Let $J,\tilde J$, be unbounded proper Jacobi matrices with Jacobi parameters $(a_n)_{n\in\bbN_0},(\tilde a_n)_{n\in\bbN_0}\in (0,\infty)^{\bbN_0}$, $(b_n)_{n\in\bbN_0},(\tilde b_n)_{n\in\bbN_0} \in \bbC^{\bbN_0}$, and Weyl matrix functions $M,\tilde M$ defined by \eqref{eq:distortedM}. Suppose $N\geq 2$. Then the following are equivalent:
\begin{enumerate}
\item\label{item:fixedsim} For a fixed sequence $w_j$, and an $\epsilon>0$ so that $\arg(w_j)\in [\epsilon,2\pi-\epsilon]$, we have
\begin{align}\label{eq:asymptotic}
\left\| w_j^{\frac{\sigma_3}{4}}  \left( M(w_j)- \tilde M(w_j)  \right)w_j^{\frac{\sigma_3}{4}} \right\|=\mathcal{O}(w_j^{-N}),\; j\to \infty.
\end{align}
\item \label{item:coeffeq} $a_n= \tilde a_n$ and $b_n=\tilde b_n$ for all $n\leq N-2$.
\item\label{item:simall} For every $\epsilon>0$, and $w$ in the sector $\arg(w)\in [\epsilon,2\pi-\epsilon]$, we have
\begin{align}\label{eq:asymptotic1}
\left\|w^{\frac{\sigma_3}{4}}\left(M(w)- \tilde M(w)\right)w^{\frac{\sigma_3}{4}} \right\|=\mathcal{O}(w^{-N}), w\to \infty.
\end{align}

\end{enumerate}

\end{theorem}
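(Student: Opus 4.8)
The plan is to prove the cyclic chain \eqref{item:simall}$\Rightarrow$\eqref{item:fixedsim}$\Rightarrow$\eqref{item:coeffeq}$\Rightarrow$\eqref{item:simall}. The implication \eqref{item:simall}$\Rightarrow$\eqref{item:fixedsim} is immediate, since $(w_j)$ lies in the sector $\arg w\in[\epsilon,2\pi-\epsilon]$ and so \eqref{eq:asymptotic1} specializes to \eqref{eq:asymptotic}. The whole content therefore lies in relating the coefficient equality \eqref{item:coeffeq} to the decay of the scaled Weyl matrix, and the central tool for both remaining implications is a uniform asymptotic expansion of $M$.

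First I would establish the expansion. For each fixed $K$ one has the operator identity
\[
R_{J^*J}(w)=-\sum_{k=0}^{K}\frac{(J^*J)^k}{w^{k+1}}+\frac{1}{w^{K+1}}(J^*J)^{K+1}R_{J^*J}(w),
\]
valid on $D((J^*J)^{K+1})$. Since $J^*J$ is pentadiagonal and maps $\ell^2_c(\bbN_0)$ into itself, the finitely supported vectors $\mathbf{e}_0=\delta_0$ and $\mathbf{e}_1=J^*\delta_0$ lie in $\bigcap_{k\ge 0}D((J^*J)^k)$, so pairing the identity with $\mathbf{e}_i,\mathbf{e}_j$ yields
\[
M_{i,j}(w)=-\sum_{k=0}^{K}\frac{s^{(k)}_{i,j}}{w^{k+1}}+\mathcal{O}\bigl(w^{-K-2}\bigr),\qquad s^{(k)}_{i,j}:=\langle\mathbf{e}_j,(J^*J)^k\mathbf{e}_i\rangle,
\]
uniformly in the sector; the remainder is controlled using self-adjointness of $(J^*J)^{K+1}$ together with the resolvent bound $\|R_{J^*J}(w)\|\le\operatorname{dist}(w,[0,\infty))^{-1}\le(|w|\sin\epsilon)^{-1}$ on the sector. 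Conjugating by $w^{\sigma_3/4}$ multiplies the $(0,0),(0,1),(1,0),(1,1)$ entries by $w^{1/2},1,1,w^{-1/2}$, so the scaled matrix has a genuine asymptotic expansion in half-integer powers of $w$ whose coefficients are the moments $s^{(k)}_{i,j}$. Tracking the orders, the estimate $\mathcal{O}(w^{-N})$ forces agreement of $s^{(k)}_{0,0}$ for $k\le N-1$ and of $s^{(k)}_{0,1},s^{(k)}_{1,0},s^{(k)}_{1,1}$ for $k\le N-2$.

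For \eqref{item:coeffeq}$\Rightarrow$\eqref{item:simall} I would argue locality: because $J^*J$ is pentadiagonal, a walk count on the lattice $\bbN_0$ shows that each moment above involves only $a_n,b_n$ with $n\le N-2$ (e.g.\ reaching site $N-1$ in $(J^*J)^{N-1}$ from $\delta_0$ and back uses at most $a_0,\dots,a_{N-2}$ but no index-$(N-1)$ parameter), so the expansions of $w^{\sigma_3/4}M w^{\sigma_3/4}$ and $w^{\sigma_3/4}\tilde M w^{\sigma_3/4}$ agree through all orders strictly below $w^{-N}$, giving \eqref{eq:asymptotic1}. For the converse \eqref{item:fixedsim}$\Rightarrow$\eqref{item:coeffeq} I would first use an elementary fact: a scalar function with a uniform asymptotic expansion $\sum_k c_kw^{-\alpha_k}$ in the sector that is $\mathcal{O}(w^{-N})$ along a single sequence $w_j\to\infty$ must have $c_k=0$ for every $\alpha_k<N$, since otherwise the lowest surviving term would dominate along $w_j$. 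Applying this entrywise to $M-\tilde M$ forces exactly the moment equalities listed above, and it then remains to invert the moment map. This I would do by induction on $n$: the unique ``peak'' walk shows $s^{(n)}_{0,1}=(a_0\cdots a_{n-1})^2\,b_n^{(\ast)}+(\text{terms in }a_j,b_j,\ j\le n-1)$, where $b_n^{(\ast)}$ is $b_n$ or $\overline{b_n}$ according to parity, so $b_n$ is determined once $a_0,b_0,\dots,a_{n-1},b_{n-1}$ are known; similarly $s^{(n+1)}_{0,0}=(a_0\cdots a_{n-1})^2a_n^2+(\text{known})$ determines $a_n$, the constraint $a_n>0$ removing the square-root ambiguity. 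Running this for $n=0,\dots,N-2$ recovers \eqref{item:coeffeq}.

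I expect the technical heart to be twofold. The first obstacle is making the walk/band bookkeeping precise enough to pin down the exact threshold $n\le N-2$, i.e.\ verifying that the index-$n$ parameters enter the first available moments affinely and with nonvanishing leading coefficient $(a_0\cdots a_{n-1})^2$, so that the recovery is genuinely triangular and invertible. The second is the rigor of the asymptotic expansion for the \emph{unbounded} operator $J^*J$: one must justify the remainder estimate uniformly in the sector, and it is exactly here that properness is used, ensuring $J^*J$ is self-adjoint with $\ell^2_c(\bbN_0)\subset\bigcap_{k\ge0}D((J^*J)^k)$ so that all moments $s^{(k)}_{i,j}$ and the truncated resolvent identity are meaningful.
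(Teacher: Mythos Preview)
Your approach is correct and genuinely different from the paper's. The paper never expands $M$ directly in moments of $J^*J$; instead it passes to the Weyl matrix $\calR$ of the associated block Jacobi matrix $\bbJ$ via the identity $\calR(z)=w^{\sigma_3/4}M(w)w^{\sigma_3/4}$ with $w=z^2$ (Proposition~\ref{prop:RMrelationship} and Lemma~\ref{lem:RvMsim}), and then proves both nontrivial implications by induction using \emph{coefficient stripping} (Lemma~\ref{lem:inductivecrank}): the leading three terms of $\calR$ determine $A_0,B_0$, and the remainder is controlled by $\calR_1$, the Weyl matrix of the once-stripped operator. Your route is a direct moment expansion of $R_{J^*J}(w)$ combined with walk combinatorics on the pentadiagonal matrix $J^*J$, bypassing the block-Jacobi machinery entirely. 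What the paper's method buys is that the bookkeeping is trivial---each inductive step peels off exactly one pair $(a_n,b_n)$ via Lemma~\ref{lem:blocks}, with no need to track which parameters enter which moment; what your method buys is that it is self-contained and does not rely on the auxiliary operator $\bfJ$ or the symmetry analysis of Section~\ref{sectionBlockJacobi}. Your triangular recovery $b_n\leftarrow s^{(n)}_{0,1}$, $a_n\leftarrow s^{(n+1)}_{0,0}$ is exactly right (I checked the leading terms and thresholds), and the uniqueness-of-asymptotic-expansion argument for \eqref{item:fixedsim}$\Rightarrow$\eqref{item:coeffeq} is sound. One small correction: properness is \emph{not} what makes $J^*J$ self-adjoint---that follows from von~Neumann's theorem for any closed densely defined $J$, and the maximal operator is always closed. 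In your argument properness is in fact inessential: $\ell^2_c\subset D(J^*)$ with $J^*\!\upharpoonright_{\ell^2_c}=J(a,b^*)\!\upharpoonright_{\ell^2_c}$ holds for any maximal $J$, so the pentadiagonal walk calculus for $(J^*J)^k$ on $\ell^2_c$ goes through regardless.
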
 
 
Simultaneously and independently, Pushnitski and \v Stampach have examined this question in the setting of Schr\"odinger operators \cite{PushStampSchroed}. Earlier results of this flavor in non-self-adjoint settings \cite{Weikard,Brown} used a different function (a scalar Weyl function that isn't Herglotz) and the results were conditional on asymptotic similarity along ``admissible" rays. Our  $M$ matrix  serves as a different kind of extension of the Weyl  function of the self-adjoint theory.

In \cite{PushnitskiStampach}, the authors also prove their bijection is a homeomorphism. In the unbounded setting, this statement is essentially more delicate on both on the direct and inverse side, as we will comment on further below. Broadly, the operators are only closed and there is not a natural notion of convergence for them, while on the inverse side, the measures are not compactly supported. However, we recover the following continuity statement for the direct spectral map.

\begin{theorem}\label{thm:continuity} 
The bijection between $J$ and its spectral data $(\nu,\psi)$ given by Theorem~\ref{thm:inverse} is continuous in the following sense. 
Let $a^N,a^\infty\in (0,\infty)^{\bbN_0}$ and $b^N,b^\infty\in \bbC^{\bbN_0} $, and suppose $J(a^N,b^N)$, $J(a^\infty,b^\infty)$ are proper. If $J(a^N,b^N)\to J(a^\infty,b^\infty)$ in the sense that 
for all $k\in \bbN_0$,
\begin{align}\label{eq:strong}
J(a^{N},b^{N})\delta_k\to J(a^\infty,b^\infty)\delta_k, \;N\to \infty,
\end{align}
then for all $h\in C_0(\bbR):=\{h\in C(\mathbb R):\lim_{x\to\pm\infty}h(x)=0\}$, 
\begin{align}\label{eq:weak}
\begin{split}
&\lim_{N\to\infty}\int\limits_{[0,+\infty)}h(x)d\nu_N(x)=\int\limits_{[0,+\infty)}h(x)d\nu(x),\\
&\lim_{N\to\infty}\int\limits_{[0,+\infty)}h(x)\psi_N(x)d\nu_N(x)=\int\limits_{[0,+\infty)}h(x)\psi(x)d\nu(x).
\end{split}
\end{align}

\end{theorem}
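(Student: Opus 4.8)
The plan is to route the entire argument through the Weyl matrix $M$, whose entries are, after composing with $s\mapsto s^2$, exactly the Cauchy transforms of the two measures we must control. Writing $J_N=J(a^N,b^N)$, $A_N=J_N^*J_N$, and using \eqref{eq:nudef}, \eqref{eq:phasedef} together with the functional calculus, one has
\[
M^N_{0,0}(z)=\int_{[0,\infty)}\frac{d\nu_N(s)}{s^2-z},\qquad M^N_{0,1}(z)=\int_{[0,\infty)}\frac{s\,\psi_N(s)}{s^2-z}\,d\nu_N(s).
\]
So I would first prove pointwise convergence $M_N(z)\to M_\infty(z)$ on $\bbC\setminus[0,\infty)$, and then convert this into the two weak limits in \eqref{eq:weak}.

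To obtain pointwise convergence of $M_N$, I would establish strong resolvent convergence $R_{A_N}(z)\to R_{A_\infty}(z)$ of the self-adjoint operators $A_N$. The hypothesis \eqref{eq:strong} is precisely entrywise convergence of the Jacobi parameters, and since each $A_N\delta_k$ is a fixed finite linear combination of $\delta_j$ whose coefficients are polynomials in finitely many of the $a^N,b^N$, this gives $A_Nu\to A_\infty u$ in $\ell^2$ for every $u\in\ell^2_c(\bbN_0)$. The only structural input needed is that $\ell^2_c(\bbN_0)$ is a core for $A_\infty$; this is where properness enters. By Theorem~\ref{thm:inverse} the spectral data of $J(a^\infty,b^\infty)$ is completely determinate, i.e.\ the associated self-adjoint block operator (Theorem~\ref{thm: 2.1}) is essentially self-adjoint on compactly supported vectors, and hence so is $A_\infty$ on $\ell^2_c(\bbN_0)$. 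The standard core criterion then yields strong resolvent convergence, and since $\mathbf{e}_0^N=\delta_0$ and $\mathbf{e}_1^N=\overline{b_0^N}\,\delta_0+a_0^N\,\delta_1$ converge in norm, we get $M_N(z)\to M_\infty(z)$ for each $z\in\bbC\setminus[0,\infty)$.

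For the measure $\nu$ the argument is then routine: pushing forward under $s\mapsto s^2$ turns $M_{0,0}$ into an ordinary Stieltjes transform, and the $\nu_N$ are probability measures, so a Helly selection argument combined with injectivity of the Stieltjes transform forces $\nu_N\to\nu_\infty$ vaguely, with the uniform mass bound upgrading this to convergence against $C_0$. Because every $h\in C_0(\bbR)$ is continuous at $0$, the first line of \eqref{eq:weak} follows with no boundary issue.

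The second line is where the real difficulty lies. The complex measures $\psi_N\,d\nu_N$ have total variation $\le 1$, so along a subsequence they converge weakly-$*$ to some $\sigma$ with $|\sigma|\le\nu_\infty$. Since $s\mapsto \tfrac{s}{s^2-z}$ lies in $C_0([0,\infty))$, convergence of $M^N_{0,1}$ identifies $\int \tfrac{s}{s^2-z}\,d\sigma=\int \tfrac{s}{s^2-z}\,\psi_\infty\,d\nu_\infty$ for all $z$; however, the kernel of $\eta\mapsto\int \tfrac{s}{s^2-z}\,d\eta$ on finite measures is exactly $\bbC\delta_0$ (the factor $s$ annihilates any mass at the origin, as one checks by writing $\tfrac{s}{s^2-z}=\tfrac12\big(\tfrac1{s-\sqrt z}+\tfrac1{s+\sqrt z}\big)$ and comparing Cauchy transforms). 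Thus $\sigma=\psi_\infty\,d\nu_\infty+c\,\delta_0$, and the whole problem reduces to showing $c=0$: no spectral mass of $|J_N|$ drifting to the origin may leave a residual phase. I expect this to be the main obstacle, since it is exactly a continuity statement for the polar phase $U_N$ (equivalently $\operatorname{\sgn}$ of the block operator) across the eigenvalue $0$, which is delicate precisely when $\nu_\infty(\{0\})>0$. My intended route is to upgrade to the strong convergence $J_NR_{A_N}(z)\delta_0\to J_\infty R_{A_\infty}(z)\delta_0$ — the norms converge because $\|J_Nv_N\|^2=\langle v_N,A_Nv_N\rangle=\langle v_N,\delta_0+zv_N\rangle$ with $v_N=R_{A_N}(z)\delta_0\to v_\infty$, while weak convergence is tested on $\ell^2_c(\bbN_0)$ — and then to exploit the spectral-projection identity $\int_{(0,\delta)}\psi_N\,d\nu_N=\langle\delta_0,U_N\mathbf 1_{[0,\delta^2)}(A_N)\delta_0\rangle$ together with $\lim_{\delta\to0}\int_{(0,\delta)}\psi_\infty\,d\nu_\infty=0$ to show the near-origin phase contribution vanishes in the iterated limit. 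Once $c=0$ is secured, testing the weak-$*$ limit against continuous $h$ gives the second line of \eqref{eq:weak}, and uniqueness of the limit promotes subsequential convergence to convergence of the full sequence.
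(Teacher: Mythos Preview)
Your overall strategy---establish pointwise convergence of a Weyl-type object, then deduce weak convergence of the spectral data---matches the paper's, but the paper routes through the self-adjoint block operator $\bfJ$ and its matrix Weyl function $\calR$ rather than through $A=J^*J$ and $M$, and this choice dissolves both of your difficulties at once.

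First, your core claim has a gap. You assert that essential self-adjointness of the block Jacobi operator on compactly supported vectors implies that $\ell^2_c(\bbN_0)$ is a core for $A_\infty=J_\infty^*J_\infty$. But essential self-adjointness of $T$ on a domain $D$ does not in general pass to $T^2$ on $D$ (think of $-i\,d/dx$ on $C_c^\infty(\bbR\setminus\{0\})$: this is a core for the momentum operator on $L^2(\bbR)$, but the Laplacian on that domain decouples into two half-line problems and is not essentially self-adjoint). The paper avoids this entirely: properness of $J$ is \emph{equivalent} to $\overline{\bbJ_0}=\bbJ$ (beginning of Section~\ref{section:injectivity}), so $\ell^2_c(\bbN_0)\oplus\ell^2_c(\bbN_0)$ is automatically a core for each $\bfJ^N$ and for $\bfJ$, and the hypothesis \eqref{eq:strong} gives $\bfJ^N\psi\to\bfJ\psi$ on this core. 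Strong resolvent convergence of the $\bfJ^N$, and hence pointwise convergence $\calR^N(z)\to\calR(z)$, is then immediate.

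Second, and more strikingly, your ``main obstacle'' (the residual $c\,\delta_0$) evaporates in the $\calR$ picture. By Proposition~\ref{prop:RMrelationship}, $\calR$ is the Stieltjes transform of the \emph{positive} matrix measure $\left(\begin{smallmatrix}1&\psi_o\\ \psi_o^*&1\end{smallmatrix}\right)d\nu_e$, so for every $v\in\bbC^2$ the scalar function $v^*\calR^N(z)v$ is an honest Herglotz function of a positive measure. Pointwise convergence of these Herglotz functions gives convergence of the positive measures $v^*W_N v\,d\nu_e^N$ against $C_0(\bbR)$ directly; polarization then yields the off-diagonal limit $\int h\,\psi_o^N\,d\nu_e^N\to\int h\,\psi_o\,d\nu_e$ with no complex-measure ambiguity and no separate analysis near $s=0$. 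Your detour through $M_{0,1}$ as the transform of a signed measure, and the ensuing delicate phase-at-zero argument, is simply unnecessary once you work with the full $2\times2$ Herglotz object.
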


Finally we prove criteria characterizing self-adjointness of $J$, as well as the case $b_n\equiv 0$ in terms of the phase function $\psi$. These are analogs of parts ii) and iii) of \cite[Theorem 2.6]{PushStamp}. The proofs are of a similar flavor; the main differences are our proofs rely on resolvents and coefficient stripping instead of a combinatorial lemma. We note that the remaining parts of this theorem depend on a polar decomposition for bounded operators found in \cite{garcia2007complex}. 
\begin{theorem}\label{thm:specialclasses}
Let $a=(a_n)_{n\in \bbN_0}\in (0,\infty)^{\bbN_0}$ and $b=(b_n)_{n\in\bbN_0}\in \bbC^{\bbN_0}$ and suppose $J(a,b)=J$ is proper. Then:
\begin{enumerate}[(i)]
\item \label{itm:self-adjointness} $J$ is self-adjoint if and only if $\psi(s)\in \bbR$ for $\nu$-a.e. $s\geq 0$.
\item \label{itm:freecriterion} $b_n=0$ for all $n\geq 0$ if and only if $\psi(s)=0$ for $\nu$-a.e. $s\geq 0$.
\end{enumerate}
\end{theorem}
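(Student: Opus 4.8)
The plan is to deduce both parts from the injectivity half of the bijection in Theorem~\ref{thm:inverse}, combined with two symmetry computations that identify the spectral data of $J^*$ and of $J(a,-b)$ in terms of that of $J=J(a,b)$. First I record how the data is encoded in the Weyl matrix: since $f(s)=(s^2-z)^{-1}$ is a bounded Borel function on $[0,+\infty)$ for $z\notin[0,+\infty)$ and $f(\lvert J\rvert)=(J^*J-z)^{-1}$, equations \eqref{eq:nudef} and \eqref{eq:phasedef} give
\[
M_{00}(z)=\int_{[0,+\infty)}\frac{d\nu(s)}{s^2-z},\qquad M_{01}(z)=\int_{[0,+\infty)}\frac{s\,\psi(s)}{s^2-z}\,d\nu(s).
\]
Thus $(\nu,\psi)$ and $(M_{00},M_{01})$ determine one another through their Stieltjes transforms, and it suffices to track the effect of each symmetry on these two entries.

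The first lemma I would prove is that $J^*=J(a,\overline b)$ is proper with spectral data $(\nu,\overline\psi)$. Let $C$ be the antiunitary involution of complex conjugation on $\ell^2(\bbN_0)$; then $C\delta_0=\delta_0$, $C$ preserves $\ell^2_c(\bbN_0)$, and $CJC=J(a,\overline b)=J^*$, whence $\overline{(J^*)_0}=C\,\overline{J_0}\,C=J^*$ and $J^*$ is proper. From $C(J^*J)C=JJ^*=(J^*)^*J^*$ and the antilinearity of $C$ (which conjugates scalars, so $C(zI)C=\overline z I$) one obtains the bounded resolvent identity $CR_{J^*J}(z)C=R_{JJ^*}(\overline z)$, valid for $z\in\bbC\setminus[0,+\infty)$. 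The Weyl matrix $\widetilde M$ of $J^*$ is built from the vectors $\delta_0$ and $(J^*)^*\delta_0=J\delta_0$; using $C\delta_0=\delta_0$, $CJ^*C=J$, and $\langle Cu,Cv\rangle=\langle v,u\rangle$, a short computation gives $\widetilde M_{00}(z)=\overline{M_{00}(\overline z)}=M_{00}(z)$ and $\widetilde M_{01}(z)=\overline{M_{01}(\overline z)}$. By the Stieltjes formulas above, $\overline{M_{01}(\overline z)}$ is the transform of $s\,\overline{\psi}(s)\,d\nu(s)$, so the data of $J^*$ is exactly $(\nu,\overline\psi)$.

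The second lemma is the analogous statement for the grading symmetry. Let $G\delta_n=(-1)^n\delta_n$, a self-adjoint unitary with $G\delta_0=\delta_0$ preserving $\ell^2_c(\bbN_0)$. Then $-GJG=J(a,-b)$, which is again proper, and $(-GJG)^*(-GJG)=GJ^*JG$, so the relevant resolvent is $GR_{J^*J}(z)G$. A direct computation, again using $G\delta_0=\delta_0$ and that the vectors defining the Weyl matrix of $-GJG$ are $\delta_0$ and $-GJ^*\delta_0$, yields $\widehat M_{00}=M_{00}$ and $\widehat M_{01}=-M_{01}$; hence $J(a,-b)$ has spectral data $(\nu,-\psi)$.

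These two lemmas finish the proof. For part~(i), $J$ is self-adjoint if and only if $J=J^*$; since $J$ and $J^*$ are both proper, the injectivity in Theorem~\ref{thm:inverse} makes this equivalent to equality of their spectral data, i.e.\ $(\nu,\psi)=(\nu,\overline\psi)$, i.e.\ $\psi=\overline\psi$ and so $\psi(s)\in\bbR$ for $\nu$-a.e.\ $s$. For part~(ii), $b_n=0$ for all $n$ if and only if $J=J(a,-b)$, which by the same injectivity is equivalent to $(\nu,\psi)=(\nu,-\psi)$, i.e.\ $\psi=0$ for $\nu$-a.e.\ $s$. I expect the main obstacle to be the first lemma: because $\lvert J^*\rvert=\sqrt{JJ^*}$ differs from $\lvert J\rvert=\sqrt{J^*J}$ in general, one must genuinely invoke the conjugation symmetry to see that the $\delta_0$-spectral measure is unchanged and the phase merely conjugated. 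Carrying this out rigorously requires care with the antilinear $C$ (conjugation of scalars and reversal of inner products), with the domain and spectrum justification of the identity $CR_{J^*J}(z)C=R_{JJ^*}(\overline z)$, and with the verification that $J^*$ is proper so that the bijection of Theorem~\ref{thm:inverse} may be applied; the second lemma is comparatively routine since $G$ is unitary.
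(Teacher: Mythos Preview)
Your argument is correct and takes a genuinely different route from the paper. The paper proves each forward implication by a direct computation (for (i), using that $J$ commutes with $f(|J|)$ when $J=J^*$; for (ii), using the grading $\Omega$, which is your $G$) followed by Stieltjes inversion, and then proves each backward implication by an inductive coefficient-stripping argument based on Proposition~\ref{prop:stripping} and the moment formula \eqref{eq:b0a0moments}. You instead establish once and for all that $J^*$ has spectral data $(\nu,\overline\psi)$ and $J(a,-b)$ has spectral data $(\nu,-\psi)$, and then invoke the injectivity half of Theorem~\ref{thm:inverse} to turn each equivalence into an equality of spectral data. This is more conceptual and handles both directions simultaneously; it also makes transparent why only $\psi$, and not $\nu$, detects these symmetries. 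The cost is that your proof depends on Theorem~\ref{thm:inverse} (hence on the local Borg--Marchenko Theorem~\ref{thm:LocalBorgMarchenko}), whereas the paper's proof of Theorem~\ref{thm:specialclasses} is self-contained modulo the coefficient-stripping machinery of Section~\ref{sectionBlockJacobi}. Since Theorem~\ref{thm:specialclasses} is not used anywhere in the proof of Theorem~\ref{thm:inverse}, there is no circularity in your approach.
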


As noted above, the main inspiration for this work is \cite{PushnitskiStampach}, and we follow their idea of embedding $J$ into a simple self-adjoint operator that is unitarily equivalent to a block Jacobi operator. The direct and inverse spectral theory of these latter operators is well-studied and may be leveraged to prove direct and inverse spectral results for $J$. We note that in addition to extending this work to the unbounded setting, where there are the usual technical obstacles, the perspective taken below shortens many of the proofs of their main theorems in the bounded case, in particular avoiding some combinatorial lemmas in that work in favor of coefficient stripping. Furthermore, it provides a more general correspondence. 

The paper is organized as follows. In Section~\ref{section:Preliminary}, we state and prove some preliminary results on the operator theory for $J$, and state unbounded extensions of the two main technical inputs we use from \cite{PushnitskiStampach} that allow for the definition of the direct spectral map (Theorem~\ref{thm:2.2}). We also prove Theorem~\ref{thm:specialclasses} in this section. In Section~\ref{sectionBlockJacobi}, we consider block Jacobi matrices and study the interplay between a certain form of their coefficients and certain symmetries of their Weyl matrices.   In Section~\ref{section:injectivity}, we prove a simple connection between the matrix $M$ defined above, and the Weyl matrix $\mathcal R$ for a block Jacobi matrix related to $J$, and we prove Theorem~\ref{thm:LocalBorgMarchenko}, Theorem~\ref{thm:inverse}, Theorem~\ref{thm:continuity}, and \ref{thm:specialclasses}.

\subsection*{Acknowledgements} We are grateful to the anonymous referees, whose feedback greatly improved this manuscript.
 
\section{Preliminary results}\label{section:Preliminary}

 For $u,v\in \ell^2(\bbN_0)$ we define the Wronskian by
\begin{align*}
W_n(u,v)=a_{n}(u_{n+1}v_n-u_nv_{n+1}).
\end{align*}

\begin{lemma}\label{lem:101824}
For all $u,v\in D(J(a,b))$, the limit
\begin{align}\label{eq:Wronskian}
W_{+\infty}(u,v) = \lim_{n\to\infty}W_n(u,v)
\end{align}
 is convergent and
\begin{align}\label{eq:WronskiLimit}
W_{+\infty}(u,v)= \langle J(a,b^*)u^*,  v \rangle - \langle u^*,  J(a,b)v \rangle.
\end{align}
\end{lemma}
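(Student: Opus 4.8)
The plan is to prove \eqref{eq:WronskiLimit} by a discrete Green's (Lagrange) identity, reducing the Wronskian to a telescoping sum whose convergence is handed to us for free by the hypothesis $u,v\in D(J)$, and to obtain the limit \eqref{eq:Wronskian} as a byproduct.

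First I would record the conjugation symmetry relating $J(a,b)$ and $J(a,b^*)$. Since the off-diagonal entries $a_n$ are real and positive, one checks entrywise that $(J(a,b^*)u^*)_n = \overline{(J(a,b)u)_n}$ for every sequence $u$; in particular $u\in D(J(a,b))$ forces $u^*\in D(J(a,b^*))$, so $J(a,b^*)u^*\in\ell^2(\bbN_0)$ and the inner product $\langle J(a,b^*)u^*,v\rangle$ is well defined and finite. Using that the inner product is conjugate-linear in its first argument, this unwinds to $\langle J(a,b^*)u^*,v\rangle=\sum_n (J(a,b)u)_n v_n$, and likewise $\langle u^*,J(a,b)v\rangle=\sum_n u_n (J(a,b)v)_n$. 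Thus the right-hand side of \eqref{eq:WronskiLimit} equals $\sum_{n=0}^\infty\bigl[(J(a,b)u)_n v_n - u_n (J(a,b)v)_n\bigr]$, and it remains to match this series against the Wronskian.

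Next I would carry out the telescoping computation. Expanding $(J(a,b)u)_n v_n - u_n (J(a,b)v)_n$ for $n\ge 1$ and cancelling the diagonal terms $b_n u_n v_n$, the surviving off-diagonal contributions regroup into $a_n(u_{n+1}v_n-u_nv_{n+1}) - a_{n-1}(u_nv_{n-1}-u_{n-1}v_n)=W_n(u,v)-W_{n-1}(u,v)$, while the boundary row $n=0$ contributes exactly $W_0(u,v)$ (consistent with the convention $W_{-1}:=0$). Summing from $0$ to $N$ therefore telescopes to the exact identity $W_N(u,v)=\sum_{n=0}^N\bigl[(J(a,b)u)_n v_n - u_n (J(a,b)v)_n\bigr]$ valid for every $N$.

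Finally, since $u,v\in D(J)$ we have $u,v,J(a,b)u,J(a,b)v\in\ell^2(\bbN_0)$, so by Cauchy--Schwarz the series $\sum_n\lvert (J(a,b)u)_n v_n\rvert\le\|J(a,b)u\|\,\|v\|$ and $\sum_n\lvert u_n (J(a,b)v)_n\rvert\le\|u\|\,\|J(a,b)v\|$ both converge. Hence the partial sums, which coincide with $W_N(u,v)$, converge as $N\to\infty$, giving simultaneously the existence of \eqref{eq:Wronskian} and its value \eqref{eq:WronskiLimit}. I expect no genuine analytic obstacle here: the only points needing care are the boundary term at $n=0$ and bookkeeping the conjugations forced by the linear-in-the-second-slot convention, after which absolute convergence is guaranteed by the very definition of the maximal domain.
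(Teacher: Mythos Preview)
Your argument is correct and essentially identical to the paper's proof: both observe that $u\in D(J(a,b))$ implies $u^*\in D(J(a,b^*))$, compute that $(J(a,b)u)_n v_n - u_n (J(a,b)v)_n = W_n(u,v)-W_{n-1}(u,v)$, and invoke Cauchy--Schwarz to conclude the telescoping sum converges. The paper is simply terser about the inner-product bookkeeping and the $n=0$ boundary term.
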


\begin{proof}
Let $u,v\in D(J(a,b))$. Then $u^*,v^*\in D(J(a,b^*))$, so by the Cauchy-Schwarz inequality, the sequence
\[
(J(a,b^*)u^*)^*_n v_n-u_n (J(a,b)v)_n
\]
is summable. By direct calculation, it is equal to $W_n(u,v) - W_{n-1}(u,v)$, with the convention $W_{-1} = 0$. By telescoping, the limit \eqref{eq:Wronskian} is convergent and \eqref{eq:WronskiLimit} holds.
\end{proof}

\begin{lemma} For any coefficient sequences with $a_n > 0$, $b_n \in \bbC$,
\begin{align}\label{eq:J0adjoint}
J_0(a,b)^*=J(a,b^*).
\end{align}
In particular, $J(a,b)^* = J(a,b^*)$ if and only if  $W_{\infty}(u,v)=0$ for all $u,v \in D(J(a,b))$.
\end{lemma}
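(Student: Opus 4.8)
The statement to prove has two parts. First, the identity $J_0(a,b)^* = J(a,b^*)$: this is the claim that the adjoint of the minimal operator equals the maximal operator with conjugated diagonal. Second, the characterization that $J(a,b)^* = J(a,b^*)$ holds precisely when the Wronskian vanishes on the whole maximal domain.

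The plan is to prove the first identity by a direct two-sided inclusion. To show $J(a,b^*) \subseteq J_0(a,b)^*$, I would take $v \in D(J(a,b^*))$ and any $u \in D(J_0(a,b)) = \ell^2_c(\bbN_0)$; since $u$ is finitely supported, the pairing $\langle u, J(a,b^*) v\rangle$ is a finite sum and one checks by a direct reindexing of the tridiagonal action that it equals $\langle J_0(a,b) u, v\rangle$, which shows $v \in D(J_0(a,b)^*)$ with $J_0(a,b)^* v = J(a,b^*) v$. The reverse inclusion $J_0(a,b)^* \subseteq J(a,b^*)$ follows by testing the adjoint relation against the basis vectors $\delta_k$: if $v \in D(J_0(a,b)^*)$, then for each $k$ the relation $\langle J_0(a,b)\delta_k, v\rangle = \langle \delta_k, J_0(a,b)^* v\rangle$ reads off the $k$-th component of the formal action $J(a,b^*) v$ as the $k$-th component of the $\ell^2$ vector $J_0(a,b)^* v$, so $J(a,b^*) v \in \ell^2$ and $v \in D(J(a,b^*))$.

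For the second part, I would use the general operator-theoretic fact that $J(a,b) = \ol{J_0(a,b)}$ combined with the first identity. Indeed, taking adjoints in the first identity and using $J_0(a,b)^{**} = \ol{J_0(a,b)}$ (valid since $J_0(a,b)$ is densely defined and closable) gives $J(a,b^*)^* = \ol{J_0(a,b)}$. Properness is \emph{not} being assumed here, so I must be careful: the cleanest route is to observe that $J(a,b)^* = J(a,b^*)$ is equivalent to $J(a,b)$ being symmetric-like in the sense that the boundary form vanishes. Concretely, I would invoke Lemma~\ref{lem:101824}: for $u, v \in D(J(a,b))$ we have $W_{+\infty}(u,v) = \langle J(a,b^*) u^*, v\rangle - \langle u^*, J(a,b) v\rangle$. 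Applying this with $u^*$ in place of $u$ (note $u \in D(J(a,b))$ iff $u^* \in D(J(a,b^*))$), the condition $W_{+\infty}(u^*, v) = 0$ for all relevant $u, v$ says exactly that $\langle J(a,b^*) u, v\rangle = \langle u, J(a,b) v\rangle$, i.e. that $J(a,b^*) \subseteq J(a,b)^*$ as an operator identity. Combined with the reverse inclusion $J(a,b)^* \subseteq J(a,b^*)$, which holds in general since $J(a,b)^* = \ol{J_0(a,b)}^* = J_0(a,b)^* = J(a,b^*)$ requires only the closability already established, the equality of operators is equivalent to the vanishing of the Wronskian form.

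The main obstacle is the bookkeeping of domains in the second part: one must track carefully whether the Wronskian vanishing is being asserted as a \emph{consequence} of $D(J(a,b)^*)$ being contained in $D(J(a,b^*))$ or vice versa, since the two operators always agree as formal expressions but may differ in domain. The key is that $J(a,b)^* = J_0(a,b)^* = J(a,b^*)$ \emph{as operators with the maximal domain on the right} follows from the first identity together with $\ol{J_0(a,b)} = J(a,b)$ and $J_0(a,b)^* = (\ol{J_0(a,b)})^*$; the remaining subtlety is only whether the domain inclusion $D(J(a,b^*)) \subseteq D(J(a,b)^*)$ coming from the Wronskian is also automatic, and here the identity $W_{+\infty} = \langle J(a,b^*)u^*, v\rangle - \langle u^*, J(a,b) v\rangle$ of Lemma~\ref{lem:101824} does all the work by converting the adjoint-domain condition into the statement that this boundary pairing vanishes for all $u, v \in D(J(a,b))$.
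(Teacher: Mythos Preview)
Your approach is essentially the paper's: test against $\delta_k$ for the first identity, then use the Wronskian formula from Lemma~\ref{lem:101824} for the characterization. However, there is a genuine gap in your justification of the inclusion $J(a,b)^* \subseteq J(a,b^*)$.

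You write the chain $J(a,b)^* = \ol{J_0(a,b)}^* = J_0(a,b)^* = J(a,b^*)$ and call $J(a,b) = \ol{J_0(a,b)}$ a ``general operator-theoretic fact.'' It is not: that equality is precisely the definition of $J$ being proper (Corollary~\ref{cor:proper}), which is \emph{not} assumed in this lemma. Worse, if your chain of equalities were valid, it would prove $J(a,b)^* = J(a,b^*)$ unconditionally, rendering the ``if and only if'' statement vacuous and the Wronskian condition automatic---which it is not. You even flag this yourself (``Properness is not being assumed here, so I must be careful''), but then proceed to use it anyway.

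The fix is a one-line replacement. In general one has only the inclusion $\ol{J_0(a,b)} \subseteq J(a,b)$ (the closure of the minimal operator sits inside the maximal operator), and taking adjoints reverses this: $J(a,b)^* \subseteq \ol{J_0(a,b)}^* = J_0(a,b)^* = J(a,b^*)$. Equivalently and more directly, as the paper does, $J_0(a,b) \subset J(a,b)$ implies $J(a,b)^* \subset J_0(a,b)^* = J(a,b^*)$. With this corrected inclusion in hand, your use of Lemma~\ref{lem:101824} to show that $W_{+\infty}\equiv 0$ on $D(J(a,b))^2$ is equivalent to the reverse inclusion $J(a,b^*) \subseteq J(a,b)^*$ is correct, and the equivalence follows.
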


\begin{proof}
Vectors $u,w\in \ell^2(\bbN_0)$ satisfy $\langle u,J_0 v\rangle =\langle w,v\rangle$ for all $v\in \ell^2_c(\bbN_0)$ if and only if $\langle u,J_0 \delta_k\rangle =\langle w,\delta_k\rangle$ for all $k\in\bbN_0$. This in turn is equivalent to $w_k=a_{k-1}u_{k-1}+b_k^*u_k+a_ku_{k+1}$ for $k\in\bbN_0$, taking the convention $u_{-1}=0$ for the sake of brevity. Since $w\in \ell^2$ by assumption, this is in turn equivalent to $u\in D(J(a,b^*))$ with $J(a,b^*)u=w$, verifying \eqref{eq:J0adjoint}.

Thus, $J_0(a,b) \subset J(a,b)$ implies $J(a,b)^* \subset J_0(a,b)^* = J(a,b^*)$. Then \eqref{eq:WronskiLimit} implies that
\[
D(J(a,b)^*) = \{ u^* \in D(J(a,b^*)) \mid W_\infty(u,v)=0 \; \forall v \in D(J(a,b)) \},
\]
and the characterization is proven.
\end{proof}

As an immediate Corollary, we may prove that for Jacobi matrices with $a_n>0$, Beckermann's \cite{Beckermann} definition of proper matrices is equivalent to $J(a,b)^* = J(a,b^*)$.

\begin{corollary}\label{cor:proper}
A Jacobi matrix $J(a,b)$ with $a_n>0$ and $b_n\in \bbC$ obeys $J(a,b)^* = J(a,b^*)$ if and only if 
\begin{align}\label{eq:proper}
\ol{J_0(a,b)}=J(a,b).
\end{align}
\end{corollary}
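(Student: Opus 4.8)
The plan is to derive Corollary~\ref{cor:proper} directly from the preceding lemma by combining its two assertions and invoking the fact that the closure of an operator equals the adjoint of its adjoint. The key observation is that the lemma already gives us \eqref{eq:J0adjoint}, namely $J_0(a,b)^* = J(a,b^*)$, and separately characterizes when $J(a,b)^* = J(a,b^*)$ in terms of the vanishing of the Wronskian. I would want to connect these to the closure $\ol{J_0(a,b)}$ via a standard operator-theoretic identity.

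First I would recall that for any densely defined closable operator $T$, one has $\ol T = T^{**}$; this is von Neumann's theorem, valid here because $J_0(a,b)$ is densely defined (its domain $\ell^2_c(\bbN_0)$ is dense in $\ell^2(\bbN_0)$) and closable (its adjoint $J_0(a,b)^* = J(a,b^*)$ is densely defined, as its maximal domain certainly contains $\ell^2_c$). Applying this to $T = J_0(a,b)$ and using \eqref{eq:J0adjoint} twice, I would compute
\begin{align*}
\ol{J_0(a,b)} = J_0(a,b)^{**} = \left( J_0(a,b)^* \right)^* = J(a,b^*)^*.
\end{align*}
So the condition \eqref{eq:proper}, that $\ol{J_0(a,b)} = J(a,b)$, becomes $J(a,b^*)^* = J(a,b)$.

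Next I would apply the adjoint identity \eqref{eq:J0adjoint} once more, but now with the diagonal sequence $b$ replaced by its conjugate $b^*$: since the lemma holds for arbitrary complex diagonal sequences, we have $J_0(a,b^*)^* = J(a, (b^*)^*) = J(a,b)$. Together with $\ol{J_0(a,b^*)} = J_0(a,b^*)^{**} = J(a,b^*)^*$, I would conclude that $J(a,b^*)^* = J(a,b)$ is equivalent to $J(a,b)^* = J(a,b^*)$, by taking adjoints of both sides (adjoints of closed densely defined operators being an involution in the appropriate sense, or simply by the symmetry of the computation under $b \leftrightarrow b^*$). Thus \eqref{eq:proper} holds if and only if $J(a,b)^* = J(a,b^*)$, which is exactly the claim.

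The main obstacle, I expect, is bookkeeping with the substitution $b \leftrightarrow b^*$ and making sure the double-adjoint/closure identities are applied to operators that are genuinely densely defined and closable, so that $T^{**} = \ol T$ is legitimate at each step. There is a mild subtlety in that one must verify closability (equivalently, dense definiteness of the adjoint) before writing $\ol{J_0} = J_0^{**}$, but this follows immediately from \eqref{eq:J0adjoint} since $J(a,b^*)$ is densely defined on its maximal domain. Once these domain issues are handled, the equivalence is essentially a formal consequence of the lemma, so the proof should be short.
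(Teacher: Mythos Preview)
Your approach is essentially identical to the paper's: both derive $\ol{J_0(a,b)} = J_0(a,b)^{**} = J(a,b^*)^*$ from \eqref{eq:J0adjoint} and then pass between $J(a,b) = J(a,b^*)^*$ and $J(a,b)^* = J(a,b^*)$ by taking adjoints and using closedness of the maximal operators. One harmless slip: the substitution $b \to b^*$ gives $\ol{J_0(a,b^*)} = J(a,b)^*$, not $J(a,b^*)^*$ as you wrote, but this aside is unnecessary since, as you also note, the final equivalence follows directly from taking adjoints of both sides.
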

\begin{proof}
Both directions will use that since $\ol{J_0(a,b)}=(J_0(a,b)^*)^*$, we have after taking adjoints of both sides of \eqref{eq:J0adjoint},
\begin{align*}
\ol{J_0(a,b)}=J(a,b^*)^*.
\end{align*}
If $J$ is proper,
\begin{align*}
J(a,b^*)^*=J(a,b).
\end{align*}
since $J(a,b)$ is closed. Thus, \eqref{eq:proper} holds. Similarly, if \eqref{eq:proper} holds, then 
\begin{align*}
J(a,b)=\ol{J_0(a,b)}=J(a,b^*)^*\implies J(a,b)^*=J(a,b^*)
\end{align*}
taking adjoints and using that $J(a,b^*)$ is closed.
\end{proof}

The following condition shows, in particular, that discrete Schr\"odinger operators ($a_n \equiv 1$) are always proper:

\begin{corollary}
If $\sup_n a_n < \infty$, the operator $J(a,b)$ is proper.
\end{corollary}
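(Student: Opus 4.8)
The plan is to reduce the statement to the vanishing of the Wronskian at infinity, using the characterizations already in hand. By Corollary~\ref{cor:proper}, $J(a,b)$ is proper if and only if $J(a,b)^*=J(a,b^*)$, and by the preceding lemma this holds precisely when $W_\infty(u,v)=0$ for all $u,v\in D(J(a,b))$. So it suffices to show, under the hypothesis $\sup_n a_n<\infty$, that $W_\infty(u,v)=0$ for every such pair.

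First I would invoke Lemma~\ref{lem:101824} to know that the limit $W_\infty(u,v)=\lim_{n\to\infty}W_n(u,v)$ already exists for all $u,v\in D(J(a,b))$; this is the only place where the precise structure of the maximal domain is needed, and it is handled there. It then remains only to pin down the value of the limit. Since $D(J(a,b))\subseteq\ell^2(\bbN_0)$, both sequences have square-summable entries, so $u_n\to 0$ and $v_n\to 0$ as $n\to\infty$. Next I would estimate directly
\[
\abs{W_n(u,v)}=a_n\,\abs{u_{n+1}v_n-u_nv_{n+1}}\le \Big(\sup_k a_k\Big)\big(\abs{u_{n+1}}\,\abs{v_n}+\abs{u_n}\,\abs{v_{n+1}}\big).
\]
Because $\sup_k a_k<\infty$ while each factor $\abs{u_n},\abs{v_n}$ tends to $0$, the right-hand side tends to $0$, whence $W_\infty(u,v)=0$. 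As this holds for all $u,v\in D(J(a,b))$, the characterization above yields $J(a,b)^*=J(a,b^*)$, i.e.\ $J(a,b)$ is proper.

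There is no substantial obstacle here; the proof is a short combination of the two lemmas with an elementary estimate. The only conceptual point worth flagging is where the hypothesis enters: a priori one knows merely that the Wronskian limit \emph{exists}, and the bound $\sup_n a_n<\infty$ is exactly what upgrades the coordinatewise decay $u_n,v_n\to 0$ into decay of $a_n(u_{n+1}v_n-u_nv_{n+1})$, thereby forcing the limit to be $0$. Without such control on $a_n$ the product need not vanish, which is precisely why properness can fail in the general unbounded case.
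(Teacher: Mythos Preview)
Your proof is correct and follows essentially the same route as the paper: reduce to showing $W_\infty(u,v)=0$ via the Wronskian characterization, then use boundedness of $(a_n)$ together with $u,v\in\ell^2$ to force $W_n(u,v)\to 0$. The only cosmetic difference is that the paper observes $W_n(u,v)\in\ell^1$ by Cauchy--Schwarz (hence $W_n\to 0$), whereas you use the pointwise decay $u_n,v_n\to 0$ directly; both are equally valid one-line estimates.
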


\begin{proof}
Assume $\sup_n a_n < \infty$. For any $u,v\in \ell^2(\bbN)$, by the Cauchy--Schwarz inequality, the sequence $W_n(u,v)$ is in $\ell^1(\bbN)$ so $\lim_{n\to\infty} W_n(u,v) =0$.
\end{proof}

We will now prove unbounded analogs of certain statements from \cite{PushnitskiStampach}, starting with an unbounded version of \cite[Lemma 3.2]{PushnitskiStampach}:

\begin{lemma}
\label{lem:BorelJJstar}
Let $T$ be a densely defined closed operator. Then, for all bounded Borel functions $f\in \mathcal{B}_b(\bbR)$ and all $\psi\in D(T)$, $\varphi\in D(T^*)$, we have $f(\lvert T \rvert) \psi \in D(T)$, $f(\lvert T^* \rvert) \varphi \in D(T^*)$, and
\begin{align}\label{eq:BorelJJstar}
Tf(|T|)\psi =f(|T^*|)T\psi ,\qquad T^*f(|T^*|)\varphi=f(|T|)T^*\varphi.
\end{align}
\end{lemma}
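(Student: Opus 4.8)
The plan is to establish the intertwining relations \eqref{eq:BorelJJstar} by exploiting the polar decomposition of $T$ together with the fact that the two self-adjoint operators $\lvert T\rvert = \sqrt{T^*T}$ and $\lvert T^*\rvert = \sqrt{TT^*}$ are unitarily equivalent away from their kernels. First I would recall the polar decomposition $T = U\lvert T\rvert$, where $U$ is the partial isometry with initial space $\overline{\ran \lvert T\rvert} = (\ker T)^\perp$ and final space $\overline{\ran T}$. The key algebraic fact is that $U$ intertwines the two absolute values: $U\lvert T\rvert = \lvert T^*\rvert U$ on $D(\lvert T\rvert) = D(T)$, equivalently $T = U\lvert T\rvert = \lvert T^*\rvert U$, and correspondingly $T^* = \lvert T\rvert U^* = U^*\lvert T^*\rvert$. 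From this one extracts the functional-calculus intertwining $U f(\lvert T\rvert) = f(\lvert T^*\rvert) U$ for every bounded Borel $f$; this follows because $U$ conjugates the spectral resolution of $\lvert T\rvert$ into that of $\lvert T^*\rvert$ on the orthogonal complement of the kernels, and both operators annihilate the relevant kernel contributions so that the multiplication by $f$ of a bounded function poses no issue (the value $f(0)$ on the kernels cancels against the factor $U$).

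Granting the intertwining $U f(\lvert T\rvert) = f(\lvert T^*\rvert) U$, the first identity in \eqref{eq:BorelJJstar} should follow by a short computation. For $\psi \in D(T) = D(\lvert T\rvert)$, one checks first that $f(\lvert T\rvert)\psi \in D(T)$: since $f$ is bounded, $f(\lvert T\rvert)$ commutes with the spectral projections of $\lvert T\rvert$ and maps $D(\lvert T\rvert)$ into itself, hence into $D(T)$. Then
\begin{align*}
T f(\lvert T\rvert)\psi = U\lvert T\rvert f(\lvert T\rvert)\psi = U f(\lvert T\rvert)\lvert T\rvert \psi = f(\lvert T^*\rvert) U \lvert T\rvert \psi = f(\lvert T^*\rvert) T\psi,
\end{align*}
where the middle steps use that $\lvert T\rvert$ commutes with $f(\lvert T\rvert)$ in the Borel calculus and the intertwining relation. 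The second identity is obtained by applying the first to the closed densely defined operator $T^*$ in place of $T$, using $\lvert T^*\rvert = \sqrt{(T^*)^* T^*}$ and $\lvert (T^*)^*\rvert = \lvert T\rvert$; symmetry of the hypotheses in $T \leftrightarrow T^*$ makes this automatic.

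I expect the main obstacle to be justifying the functional-calculus intertwining $U f(\lvert T\rvert) = f(\lvert T^*\rvert)U$ rigorously, especially the interaction with the kernels of $\lvert T\rvert$ and $\lvert T^*\rvert$, where $U$ is only a partial isometry rather than unitary. The clean way around this is to observe that on $(\ker T)^\perp = \overline{\ran\lvert T\rvert}$ the restriction of $U$ is an isometry intertwining $\lvert T\rvert$ with $\lvert T^*\rvert$ restricted to $(\ker T^*)^\perp$, so by the uniqueness of the spectral theorem it intertwines the whole Borel calculus there; on $\ker T$ both sides vanish after multiplication by $\lvert T\rvert$ (which is what actually appears in the computation above), so the value of $f$ at $0$ never enters. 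An alternative, perhaps cleaner, route avoids discussing $U$ on the kernel altogether: one approximates $f$ by functions vanishing near $0$, or equivalently writes $T f(\lvert T\rvert) = U\lvert T\rvert f(\lvert T\rvert)$ and notes that $g(s) := s f(s)$ vanishes at $s=0$, so that $U g(\lvert T\rvert) = g(\lvert T^*\rvert) U$ holds unambiguously because $g(\lvert T\rvert)$ already maps into $\overline{\ran\lvert T\rvert}$. This latter formulation sidesteps the kernel subtlety entirely and is the version I would write up.
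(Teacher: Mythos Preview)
Your argument via the polar decomposition $T = U\lvert T\rvert = \lvert T^*\rvert U$ is correct and gives a genuinely different proof from the paper's. The paper instead works with $g(T^*T)$ rather than $f(\lvert T\rvert)$: it verifies the intertwining $T\,R_{T^*T}(z) = R_{TT^*}(z)\,T$ on $D(T)$ directly from the algebra of resolvents, observes that the set of $g\in\mathcal B_b(\bbR)$ for which $Tg(T^*T)\psi = g(TT^*)T\psi$ holds is an algebra closed under uniformly bounded pointwise limits (using that $T$ is closed), and concludes that it must be all of $\mathcal B_b(\bbR)$; the substitution $f(s)=g(s^2)$ finishes. Your route is more structural---it explains the identity as a consequence of the unitary equivalence of $\lvert T\rvert$ and $\lvert T^*\rvert$ on the orthogonal complements of their kernels---whereas the paper's route is a self-contained bootstrapping from resolvents that avoids invoking the polar decomposition altogether. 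Your worry about the kernel is in fact harmless: since $U$ vanishes on $\ker T = \ker\lvert T\rvert$ and $f(\lvert T\rvert)$ preserves that subspace, both sides of $Uf(\lvert T\rvert) = f(\lvert T^*\rvert)U$ vanish there, so the identity holds for every bounded Borel $f$, not just those with $f(0)=0$; your proposed reduction to $g(s)=sf(s)$ is a valid but unnecessary precaution.
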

\begin{proof}
Consider the set $M$ of functions $g\in \mathcal B_b(\bbR)$ such that for all $\psi \in D(T)$, we have $g(T^*T)\psi \in D(T)$ and
\begin{align}\label{eq:1116}
Tg(T^*T)\psi =g(TT^*)T\psi.
\end{align}
This set is a subalgebra of $\cB_b(\bbR)$ and it contains resolvents: if $z\in \bbC\setminus \bbR$ and $\psi \in D(T)$, then $\eta=R_{T^*T}(z)\psi \in D(T^*T) \subset D(T)$, so $\psi = (T^*T-z) \eta$ implies $T^*T\eta \in D(T)$,  $T \psi  = T T^* T \eta - z T \eta$, and finally $T \eta = R_{TT^*}(z) T \psi$.
Moreover, if $g_n\in M$ are uniformly bounded and converge pointwise to $g$, then $g \in M$, since $T$ is closed. Thus, $M = \cB_b(\bbR)$.

Setting $f(s)=g(s^2)$ gives the first half of \eqref{eq:BorelJJstar}, while the second half comes from exchanging $T$ with $T^*$ in the argument.
\end{proof}

With this in hand, we may now prove the unbounded analog of  \cite[Theorem 2.1]{PushnitskiStampach}: % Theorem~\ref{thm: 2.1}.

\begin{theorem} %[Unbounded Analog of Theorem 2.1 of \cite{PushnitskiStampach}]
\label{thm: 2.1}
The set $\delta_0, J^* \delta_0$ is a cyclic set for $\lvert J \rvert$, i.e., 
%We have 
\begin{align}\label{eq:cyclicity}
C_{|J|}(\delta_0,J^*\delta_0):=\overline{ \vspan \{ R_{|J|}(z)\delta_0,R_{|J|}(z)J^*\delta_0:z\in\bbC\setminus \bbR \}}=\ell^2(\mathbb N)
\end{align}
In particular, the multiplicity of the spectrum of $|J|$ is at most $2$. Moreover, $\delta_0$ is a maximal vector for $|J|$, i.e., its spectral measure is a maximal spectral measure for $\lvert J \rvert$.
\end{theorem}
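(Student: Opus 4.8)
The plan is to prove the three claims in sequence, with cyclicity as the central statement from which the multiplicity bound and maximality of $\delta_0$ follow. First I would establish \eqref{eq:cyclicity}. The natural approach is to show that the orthogonal complement of $C_{|J|}(\delta_0,J^*\delta_0)$ is trivial. So suppose $\varphi \in \ell^2(\bbN_0)$ is orthogonal to $R_{|J|}(z)\delta_0$ and $R_{|J|}(z)J^*\delta_0$ for all $z \in \bbC \setminus \bbR$. Since resolvents of $|J|$ generate, via Stone--Weierstrass / pointwise-bounded-limit arguments, the whole bounded Borel functional calculus of $|J|$, orthogonality to these resolvent orbits is equivalent to $\langle \varphi, f(|J|)\delta_0\rangle = 0$ and $\langle \varphi, f(|J|)J^*\delta_0\rangle = 0$ for all $f \in \cB_b(\bbR)$. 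The key structural input is Lemma~\ref{lem:BorelJJstar} applied with $T = J$: using $\mathbf{e}_1 = J^*\delta_0$ and the intertwining relation, I would rewrite $f(|J|)J^*\delta_0 = J^* g(|J^*|)\delta_0$ (via the second identity in \eqref{eq:BorelJJstar} with a suitable $g$), so that the two orbits together encode the action of $J$ and $J^*$ on functional-calculus images of $\delta_0$.

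The combinatorial heart of the argument is that $\delta_0$ is a cyclic vector for the pair $(J, J^*)$ in the purely algebraic sense: repeatedly applying $J$ and $J^*$ to $\delta_0$ produces all of $\ell^2_c(\bbN_0)$. Concretely, $J^*\delta_0 = b_0^*\delta_0 + a_0\delta_1$ already has a nonzero $\delta_1$ component (since $a_0 > 0$), so $\delta_1 \in \vspan\{\delta_0, J^*\delta_0\}$; then $J\delta_1$ and $J^*\delta_1$ reach $\delta_2$ because $a_1 > 0$, and inductively one recovers every $\delta_k$. Thus the span of $\{\delta_0, J^*\delta_0, J\delta_0, \dots\}$ under the algebra generated by $J, J^*$ contains $\ell^2_c(\bbN_0)$. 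To convert this algebraic cyclicity into the spectral statement \eqref{eq:cyclicity}, I would use the functional calculus: the span in \eqref{eq:cyclicity} is invariant under $f(|J|)$ for bounded Borel $f$, and Lemma~\ref{lem:BorelJJstar} shows it is also invariant (after intersecting with the appropriate domains) under $J$ and $J^*$ in the sense needed; pushing the algebraic relations through the bounded functional calculus shows each $\delta_k$ lies in the closed span, so the closure is all of $\ell^2(\bbN_0)$.

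Once cyclicity is established, the multiplicity bound is immediate: a self-adjoint operator (here $|J|$) with a cyclic set of two vectors has spectral multiplicity at most $2$, by the standard spectral-multiplicity theory. For maximality of $\delta_0$, I would argue that the spectral measure of $\delta_0$ dominates that of $J^*\delta_0$, so that the scalar measure $\nu$ from \eqref{eq:nudef} is a maximal spectral measure. The cleanest route is to show $J^*\delta_0 \in \ran(\chi(|J|))$-absolute-continuity-type domination, i.e.\ that the spectral measure $\mu_{J^*\delta_0}$ is absolutely continuous with respect to $\mu_{\delta_0} = \nu$; this follows because, by Lemma~\ref{lem:BorelJJstar}, $f(|J|)J^*\delta_0 = J^*g(|J^*|)\delta_0$ and the relation $J^*\delta_0 = \mathbf{e}_1$ ties the second spectral measure to integration against $\delta_0$'s measure. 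Since $\delta_0$ and $J^*\delta_0$ generate the whole space and $\mu_{J^*\delta_0} \ll \mu_{\delta_0}$, the measure $\nu = \mu_{\delta_0}$ is maximal.

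The main obstacle I anticipate is the domain bookkeeping in the second paragraph: $J$ and $J^*$ are unbounded, so the intertwining relations of Lemma~\ref{lem:BorelJJstar} must be applied only to vectors in the appropriate domains, and the passage from algebraic cyclicity of $\{\delta_k\}$ under $J, J^*$ to cyclicity under the \emph{bounded} functional calculus of $|J|$ requires care that the vectors produced stay in $D(J) \cap D(J^*)$ and that limits are taken in a way compatible with closedness of $J$. This is precisely where the hypothesis that $J$ is proper (equivalently $J^* = J(a,b^*)$, via Corollary~\ref{cor:proper}) is used, ensuring the adjoint acts as the expected tridiagonal operator on $\delta_0$ and that no pathological domain issues obstruct the inductive recovery of the $\delta_k$.
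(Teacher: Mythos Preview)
Your overall strategy matches the paper's, but there is a real gap in the cyclicity step. The claim that Lemma~\ref{lem:BorelJJstar} makes $V := C_{|J|}(\delta_0,J^*\delta_0)$ ``invariant under $J$ and $J^*$'' does not follow: the lemma gives $Jf(|J|)\psi = f(|J^*|)J\psi$ and $J^*f(|J^*|)\varphi = f(|J|)J^*\varphi$, so applying $J$ (or $J^*$) to an element of $V$ produces a vector expressed through the functional calculus of $|J^*|$, not $|J|$, and there is no reason this lands back in $V$. The paper never attempts such an invariance claim. Instead it observes that the \emph{even} powers $|J|^{2n} = (J^*J)^n$ are simultaneously in the Borel functional calculus of $|J|$ (so that $|J|^{2n}\delta_0,\ |J|^{2n}J^*\delta_0 \in V$ via the cutoff approximation $f_c(s)=s^{2n}\chi_{[0,c]}(s)$ and dominated convergence) \emph{and} polynomial in $J,J^*$; the tridiagonal structure with $a_n>0$ then gives every $\delta_k$ as a finite linear combination of $\{(J^*J)^n\delta_0,\ (J^*J)^mJ^*\delta_0\}$, hence $\delta_k\in V$. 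Your algebraic cyclicity observation is the right combinatorial input, but it must be channeled through these even powers rather than through $J$ and $J^*$ separately.

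Your maximality argument via $\mu_{J^*\delta_0}\ll \mu_{\delta_0}$ is correct in spirit and is essentially the paper's argument rephrased, but one step is missing: from $\chi_B(|J|)\delta_0=0$ you must first obtain $\chi_B(|J^*|)\delta_0=0$ before Lemma~\ref{lem:BorelJJstar} gives $\chi_B(|J|)J^*\delta_0 = J^*\chi_B(|J^*|)\delta_0 = 0$. This uses the antiunitary conjugation $C$ on $\ell^2(\bbN_0)$ with $C\delta_0=\delta_0$ and $CJC=J^*$, whence $C|J|C=|J^*|$; the paper invokes exactly this identity.
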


\begin{proof}
Denote the closure of the span by $V$. 
By a density argument, cf. \cite[Lemma 8.44]{LukicBook} for every bounded Borel function $f$, $V$ contains $f(\lvert J \rvert) \delta_0$ and $f(\lvert J \rvert) J^* \delta_0$.

Now assume that $\psi \in D(\lvert J \rvert^{2n})$ and $c  \in \bbN$, and denote $f_c(x) = x^{2n}$ if $\lvert x \rvert \le c$ and $f_c(x) = 0$ otherwise. Then by dominated convergence, $f_c(\lvert J \rvert) \psi \to \lvert J \rvert^{2n} \psi$ as $c\to\infty$. In particular,  $\lvert J \rvert^{2n} \delta_0, \lvert J \rvert^{2n} J^* \delta_0 \in V$.

As noted in the proof of \cite[Lemma 3.1]{PushnitskiStampach}, each $\delta_j$ may be written as a finite linear combination of the elements $|J|^{2n}\delta_0$ and $|J|^{2m}J^*\delta_0$, for $n,m\geq 0$, so $\delta_j \in V$ for all $j$. Since $V$ is closed, $V = \ell^2(\bbN_0)$.

Finally, we show $\delta_0$ is maximal. Let $w$ be a maximal vector for $|J|$, which is guaranteed to exist by the spectral theorem. Let $B\subset [0,+\infty)$ be a Borel set, and suppose $\delta_0\in \ker(\chi_{B}(|J|))=\ran(\chi_B(|J|)^{\perp}$,  then in particular, for $z\in \bbC\setminus \bbR$,
\begin{align*}
0&=\langle \delta_0, \chi_B(|J|) R_{|J|}(z)w\rangle=\langle \delta_0, R_{|J|}(z)\chi_B(|J|) w\rangle \\
&=\langle R_{|J|}(z^*)\delta_0, \chi_B(|J|) w\rangle.
\end{align*}
Similarly, we have 
\begin{align*}
\langle R_{|J|}(z)J^*\delta_0,  \chi_B(|J|)w\rangle&=\langle R_{|J|}(z) \chi_B(|J|)J^*\delta_0, w\rangle\\
&=\langle R_{|J|}(z) J^*\chi_B(|J^*|)\delta_0, w\rangle
\end{align*}
using Lemma~\ref{lem:BorelJJstar}. So that 
\begin{align*}
\langle R_{|J|}(z)J^*\delta_0,  \chi_B(|J|)w\rangle&=\langle R_{|J|}(z) J^*\chi_B(|J^*|)\delta_0, w\rangle
=\langle R_{|J|}(z) J^*\overline{\chi_B(|J|)\delta_0}, w\rangle=0
\end{align*}
where we use that $J^*\delta_0=(J\delta_0)^*$ and the assumption. By \eqref{eq:cyclicity}, $\chi_B(|J|)w=0$, and by maximality, $\chi_{B}(|J|)\equiv 0$, so that $\delta_0$ is a maximal vector.
\end{proof}

Consider $\nu$, defined as above as the spectral measure for $|J|$ for the vector $\delta_0$, we have the following lemma.
\begin{lemma}\label{lem:momentsfinite}
The measure $\nu$ has  infinite support (in the sense of cardinality) and finite moments:
\begin{align}\label{eq:moments}
\int\limits_{[0,+\infty)}x^{k}d\nu(x)<\infty,\;k\in \bbN_0.
\end{align}
\end{lemma}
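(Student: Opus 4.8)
The plan is to prove the two assertions of Lemma~\ref{lem:momentsfinite} separately: first that $\nu$ has infinite support, and then that all moments are finite.

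For the \emph{infinite support} claim, I would argue by contradiction. Recall from \eqref{eq:nudef} that $\nu$ is the spectral measure of $|J|$ for the cyclic-type vector $\delta_0$, which by Theorem~\ref{thm: 2.1} is a maximal vector for $|J|$. If $\nu$ had finite support, say $\operatorname{supp}\nu = \{s_1,\dots,s_m\}$, then by maximality the spectrum of $|J|$ would be exactly this finite set, so $|J|$ (and hence $J^*J$) would be a bounded operator with finite spectrum. This forces $\ell^2(\bbN_0)$ to decompose into finitely many eigenspaces of $|J|$, which by the cyclicity/multiplicity statement of Theorem~\ref{thm: 2.1} (multiplicity at most $2$) would make $\ell^2(\bbN_0)$ finite-dimensional, a contradiction. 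I would want to phrase this cleanly: a maximal spectral measure supported on a finite set means $|J|$ has finite spectrum of multiplicity $\le 2$, hence $\dim \ell^2(\bbN_0) \le 2m < \infty$, which is absurd.

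For the \emph{finite moments} claim, the key observation is that the moments in \eqref{eq:moments} should be expressible via the Jacobi parameters. By \eqref{eq:nudef} with $f(x) = x^{2k}$ (cut off and passed to the limit by monotone/dominated convergence as in the proof of Theorem~\ref{thm: 2.1}), the even moments satisfy
\begin{align*}
\int_{[0,+\infty)} x^{2k}\,d\nu(x) = \langle \delta_0, (J^*J)^k \delta_0\rangle = \|(J^*J)^{k/2}\delta_0\|^2,
\end{align*}
so I must show $\delta_0 \in D((J^*J)^k)$ for every $k$, equivalently $\delta_0 \in D(|J|^{2k})$. This is where the finite-bandwidth (tridiagonal) structure of $J$ is essential: $\delta_0$ is compactly supported, $J$ and $J^*$ each shift support by at most one index, so formally $(J^*J)^k\delta_0$ is again a finitely supported vector and lies in $\ell^2$. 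I would make this rigorous by an induction showing $\ell^2_c(\bbN_0) \subset D(|J|^{2k})$ for all $k$, using that on compactly supported vectors $J$ and $J^*$ act as the finite tridiagonal formulas and produce compactly supported outputs in $\ell^2_c \subset D(J)\cap D(J^*)$. The odd moments are then controlled by Cauchy--Schwarz, $\int x^{2k+1}\,d\nu \le (\int x^{2k}\,d\nu)^{1/2}(\int x^{2k+2}\,d\nu)^{1/2}$, reducing everything to the even case.

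The main obstacle I anticipate is the domain bookkeeping in the moment computation: since $J$ is unbounded and only \emph{formally} tridiagonal on its maximal domain, I need to be careful that the algebraic manipulation $\langle \delta_0,(J^*J)^k\delta_0\rangle = \int x^{2k}\,d\nu$ is justified by the functional calculus and that $\delta_0$ genuinely lies in $D(|J|^{2k})$. The cleanest route is to verify $\ell^2_c(\bbN_0)\subset D((J^*J)^k)$ by induction, checking at each step that the compactly supported output stays inside the natural domain $\{u\in D(J) \mid Ju \in D(J^*)\}$ on which $J^*J$ acts; here properness of $J$ (so that $J = \ol{J_0}$ and $J^* = J(a,b^*)$, by Corollary~\ref{cor:proper}) guarantees that the formal tridiagonal action of $J^*$ on a finitely supported vector really is $J^*$ applied to it. Once this domain inclusion is in place, the truncation argument from the proof of Theorem~\ref{thm: 2.1} applies verbatim to pass from $f_c(|J|)\delta_0$ to $|J|^{2k}\delta_0$ and identify the integral with $\langle\delta_0,(J^*J)^k\delta_0\rangle < \infty$.
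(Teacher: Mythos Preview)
Your proposal is correct. For the finite-moments part you take essentially the same route as the paper: both arguments rest on the observation that $J^*J$ maps $\ell^2_c(\bbN_0)$ to itself (so $\delta_0 \in D(|J|^{2k})$ for all $k$), and then deduce finiteness of moments from this. The paper phrases the last step via a resolvent identity $\delta_0 = (|J|^2 - i)^{-k}\psi$ and compares spectral measures, while you identify the moment directly with $\langle \delta_0,(J^*J)^k\delta_0\rangle$; these are equivalent.

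For the infinite-support claim your argument is genuinely different from the paper's. The paper works directly with the tridiagonal structure: if $\nu$ were finitely supported there would be a nontrivial even polynomial $p(x)=q(x^2)$ vanishing $\nu$-a.e., hence $q(|J|^2)\delta_0 = 0$; but the leading coefficient of $(J^*J)^m\delta_0$ in the $\delta_{2m}$ direction is $\prod_{j<m} a_j^2 \neq 0$, contradiction. You instead invoke Theorem~\ref{thm: 2.1}: maximality of $\delta_0$ forces $\sigma(|J|)=\operatorname{supp}\nu$ to be finite, and the multiplicity bound $\le 2$ then makes $\ell^2(\bbN_0)$ finite-dimensional. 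Your route is slicker and avoids the explicit coefficient computation, at the cost of relying on the (already proven, so no circularity) cyclicity theorem; the paper's argument is more self-contained and reusable in settings where one has not yet established maximality.
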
	

\begin{proof}
If $\nu$ was supported on a finite set of points, there would exist a nontrivial polynomial $p$ such that $p=0$ $\nu$-a.e.. Supposing such a polynomial $p$ exists, and taking this polynomial to be even, we may write $p(x) = q(x^2)$, and we have
\[
\lVert q(\lvert J \rvert^2) \delta_0 \rVert^2 = \int \lvert q(x^2) \rvert^2 \,d\nu(x) = 0.
\]
Owing to the tridiagonal structure of $J$, we have 
\[
\lvert J \rvert^{2k} \delta_0 - \prod_{j=0}^{k-1} a_j^2  \; \delta_{2k} \in \vspan\{ \delta_j \mid 0 \le j \le 2k-1\},
\]
Due to this, if $q$ is a polynomial of degree $m$, then $\langle
\delta_{2m}, q(|J|^2) \delta_0 \rangle \neq 0$. In particular, for any
nontrivial polynomial $q$, $q(|J|^2) \delta_0 \neq 0$, leading to
contradiction.

Since $\lvert J \rvert^{2}$ preserves $\ell^2_c(\bbN_0)$, $\delta_0 \in D( \lvert J \rvert^{2k})$ for every $k\in \bbN_0$. In particular, by the spectral theorem, there exists $\psi \in \ell^2(\bbN)$ such that $\delta_0 = (\lvert J \rvert^2 - i)^{-k} \psi$. By the functional calculus, spectral measures of $\delta_0$ and $\psi$ for $\lvert J \rvert$ are related by
\[
d\nu = \frac 1{ \lvert x^2 - i \rvert^{2k}} \,d\mu_\psi
\]
so $\mu_\psi(\bbR) = \lVert\psi \rVert_2^2 < \infty$ implies $\int x^{4k} \,d\nu(x) < \infty$.
\end{proof}

We can now establish the direct spectral map, generalizing \cite[Theorem 2.2]{PushnitskiStampach}.

\begin{proof}[Proof of Theorem~\ref{thm:2.2}]
The measure $\nu$ is, by \eqref{eq:nudef}, the spectral measure of $\delta_0$ with respect to $\lvert J \rvert$. 
For $f\in \mathcal{B}_b(\bbR)$, we note that $f(|J|)=g(J^*J)$ for $g(s^{1/2})=f(s)$, $s\geq 0$, a bounded Borel function. So, $f(|J|)\delta_0\in D(J^*J)\subseteq D(J)$, and the left hand side of \eqref{eq:phasedef} makes sense for $f\in \calB_b(\bbR)$. We prove the estimate 
\begin{align}\label{eq:functional}
|\langle Jf(|J|)\delta_0,\delta_0\rangle|\leq \int\limits_{[0,+\infty)}s|f(s)|d\nu(s)
\end{align}
for a dense set of $f\in L^1(sd\nu(s))$, from which the result will follow by duality.  Let $f\in C_c(0,\infty)$. Then, factoring $f(s)=|f(s)|^{1/2}f(s)^{1/2}$ for 
\begin{align*}
f(s)^{1/2}:=\begin{cases}
f(s)/|f(s)|^{1/2},&f(s)\ne 0\\
0,&f(s)=0
\end{cases}
\end{align*}
and for $s>0$ set 
\begin{align*}
h(s)=s^{-1/2}|f(s)|^{1/2},\;g(s)=s^{1/2}f(s)^{1/2}
\end{align*}
with $h(0)=g(0)=0$. $h$ and $g$ are bounded and continuous on $[0,+\infty)$ and $f(s)=h(s)g(s)$. By Lemma~\ref{lem:BorelJJstar}, we have 
\begin{align*}
\langle Jf(|J|)\delta_0,\delta_0\rangle&=\langle Jg(|J|)h(|J|)\delta_0,\delta_0\rangle=\langle g(|J^*|)Jh(|J|)\delta_0,\delta_0\rangle
\end{align*}
since $h(|J|)\delta_0\in D(J^*J)\subseteq D(J)$, because $h(s)=\tilde h(s^2)$ for a $\tilde h\in \calB_b(\bbR)$ and all $s\geq 0$. Thus,
\begin{align*}
|\langle Jf(|J|)\delta_0,\delta_0\rangle|&=|\langle Jh(|J|)\delta_0,g(|J^*|)^*\delta_0\rangle|\leq \|Jh(|J|)\delta_0\| \|g(|J^*|)^*\delta_0\|
\end{align*}
 by the Cauchy-Schwarz inequality. We may conclude since 
 \begin{align*}
 \|Jh(|J|)\delta_0\|^2&=\langle h(|J|)\delta_0,J^*Jh(|J|)\delta_0\rangle=\int\limits_{[0,+\infty)}s^2|h(s)|^2d\nu(s)=\int\limits_{[0,+\infty)}s|f(s)|d\nu(s)
\end{align*}
and using the antiunitary equivalence of $|J|$ and $|J^*|$ under complex conjugation,
\begin{align*}
\|g(|J^*|)^*\delta_0\|^2&=\langle g(|J|)\delta_0,g(|J|)\delta_0 \rangle=\int\limits_{[0,+\infty)}|g(s)|^2d\nu(s)=\int\limits_{[0,+\infty)}s|f(s)|d\nu(s)
 \end{align*}
giving \eqref{eq:functional}. Thus, the linear functional $f\mapsto \langle Jf(|J|)\delta_0,\delta_0\rangle$ is bounded by $1$ on a dense subset of $L^1(sd\nu(s))$, and so extends to a linear functional with the same bound on $L^1(sd\nu(s))$, while $\calB_b(\bbR)\subseteq L^1(sd\nu(s))$ by Lemma~\ref{lem:momentsfinite}. In particular, there is a unique $\psi\in L^\infty(sd\nu(s))$ such that \eqref{eq:phasedef} holds. Setting the normalization $\psi(0)=0$ extends $\psi\in L^\infty(\nu)$ uniquely. 
\end{proof}

Finally, we prove the sufficiency condition mentioned in the introduction, making use of the well-known scalar condition of exponential tails, cf. \cite[Corollary 4.11]{schmudgen2017moment}.

\begin{lemma}\label{prop:determinacy}
For a matrix measure $d\mu$,  assume that 
\begin{align}\label{eq:1025}
\int\limits_{\bbR}e^{\epsilon |x|}d\Tr(\mu)(x)<\infty
\end{align} 
for some $\epsilon>0$.
Then, $d\mu$ is a determinate measure. In other words, if 
\begin{align}\label{eq:matrixmoments}
\int\limits_{\bbR}x^kd\mu=\int\limits_{\bbR}x^kd\tilde \mu,\quad \forall k\in \bbN_0
\end{align}
for a matrix measure $d\tilde\mu$ with finite moments, then $d\tilde \mu=d\mu$.
\end{lemma}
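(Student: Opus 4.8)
The plan is to reduce the matrix moment problem to a family of scalar moment problems indexed by vectors $v \in \bbC^2$, exploiting that each scalar measure so obtained inherits the exponential decay assumed of the trace measure. For a fixed $v \in \bbC^2$, set $d\mu_v = v^* \, d\mu \, v$ and $d\tilde\mu_v = v^* \, d\tilde\mu \, v$; these are positive scalar measures since $\mu$ and $\tilde\mu$ are positive matrix measures. First I would record the elementary bound that for any positive semidefinite matrix $A$ and vector $v$, one has $v^* A v \le \lambda_{\max}(A)\,|v|^2 \le \Tr(A)\,|v|^2$, using that all eigenvalues of $A$ are nonnegative. Applying this with $A = \mu(B)$ for every Borel set $B$ yields the domination of measures $d\mu_v \le |v|^2 \, d\Tr(\mu)$.

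Next, combining this domination with the hypothesis \eqref{eq:1025}, I obtain
\[
\int_\bbR e^{\epsilon|x|} \, d\mu_v \le |v|^2 \int_\bbR e^{\epsilon|x|}\, d\Tr(\mu) < \infty,
\]
so the scalar measure $\mu_v$ has exponentially decaying tails. By the standard scalar criterion \cite[Corollary 4.11]{schmudgen2017moment}, $\mu_v$ is therefore determinate.

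The third step links the two sides. Assuming \eqref{eq:matrixmoments}, for every $k \in \bbN_0$ we have
\[
\int_\bbR x^k \, d\tilde\mu_v = v^* \Big(\int_\bbR x^k \, d\tilde\mu\Big) v = v^* \Big( \int_\bbR x^k \, d\mu \Big) v = \int_\bbR x^k \, d\mu_v,
\]
so $\tilde\mu_v$ is a positive measure with finite moments sharing all moments with the determinate measure $\mu_v$; hence $\tilde\mu_v = \mu_v$. Since $v \in \bbC^2$ was arbitrary, $v^* \mu(B) v = v^* \tilde\mu(B) v$ for every Borel set $B$ and every $v$. Finally I would conclude by complex polarization: two Hermitian matrices inducing the same quadratic form on all of $\bbC^2$ coincide, so $\mu(B) = \tilde\mu(B)$ for every Borel set $B$, i.e.\ $d\tilde\mu = d\mu$.

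I do not expect a genuine obstacle here, as the reduction and polarization are routine; the only points requiring care are (i) noting that $\tilde\mu_v$ has finite moments — which is exactly what lets us invoke determinacy of $\mu_v$ against it — and (ii) observing that the exponential-tail hypothesis is needed only for $\mu$, never for the competitor $\tilde\mu$, since determinacy of $\mu_v$ already pins down any measure sharing its moments.
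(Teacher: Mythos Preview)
Your proposal is correct and follows essentially the same approach as the paper: reduce to scalar measures $v^* d\mu\, v$, use the exponential-tail bound on $\Tr(\mu)$ to conclude scalar determinacy, match moments, and finish by polarization. The only cosmetic difference is that the paper first writes $d\mu = W\, d\Tr(\mu)$ via Radon--Nikodym and bounds $v^* W v$ using $\Tr W = 1$, whereas you bypass this and bound $v^* d\mu\, v \le |v|^2\, d\Tr(\mu)$ directly; the two are equivalent.
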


\begin{proof}
The Radon--Nikodym decomposition of a matrix measure with respect to its trace (see, e.g.,\cite[Lemma 6.37]{LukicBook}) gives representations % we may decompose 
\begin{align*}
d\tilde\mu(x)=\widetilde{W}(x)d\Tr(\tilde\mu)(x),\quad d\mu(x)=W(x)d\Tr(\mu)(x)
\end{align*}
for $\tilde W,W\geq 0$ and $\Tr(W)=1$ a.e. $\Tr(\tilde\mu)$, $\Tr(\mu)$ respectively. 
For ${v}\in \bbC^2$, we have by the Cauchy-Schwarz inequality:
\begin{align*}
\int\limits_{\bbR}e^{\epsilon |x|} v^* W(x) {v}d\Tr(\mu)(x)<\infty
\end{align*}
and the scalar measure $v^* W(x){v}d\Tr(\mu)(x)$ is determinate, so that \eqref{eq:matrixmoments} implies 
\begin{align*}
\int\limits_{\bbR}x^k v^* W(x) {v}d\Tr(\mu)(x)=\int\limits_{\bbR}x^k v^*  \widetilde W(x)v d\Tr(\tilde\mu)(x),
\end{align*}
so that for any ${v}\in \bbC^2$, $v^* W v d\Tr(\mu) = v^* \widetilde W v d\Tr(\tilde\mu)$. By the polarization identity, for any $u,v \in \bbC^2$, $u^* W v d\Tr(\mu) = u^* \widetilde W v d\Tr(\tilde\mu)$, so $\mu = \tilde \mu$. 
\end{proof}

\section{Block Jacobi matrices with a symmetry condition} \label{sectionBlockJacobi}

We consider block Jacobi matrices with $2\times 2$ blocks,
\begin{align}\label{eq:J0}
\mathbb J  = \begin{pmatrix}
B_0& A_0&0&&&&\\
A_0^*& B_1&A_1&0&&&\\
0&A_1^*&B_2&A_2&0&&\\
&\ddots&\ddots&\ddots&\ddots&\ddots\\
\end{pmatrix}
\end{align}
It is assumed that $\det A_j \neq 0$ and $B_j^* = B_j$. 
We will also assume that $\bbJ$ is self-adjoint with the maximal domain $D(\bbJ) \subset \ell^2(\bbN_0)$ given by the set of $u$ such that $\bbJ u \in \ell^2(\bbN_0)$.

\begin{definition}\label{def:calR}
The Weyl matrix function for $\bbJ$ is $\calR : \bbC \setminus \bbR \to \bbC^{2\times 2}$ defined by
\begin{align*}
\calR_{i,j}(z):&=\langle \delta_i,(\bbJ-z)^{-1}\delta_j \rangle, \qquad i,j \in \{0,1\}
\end{align*}
\end{definition}

We denote by $S_+$ the right shift operator $S_+ e_k= e_{k+1}$ on $\ell^2(\bbN_0)$ and consider the once stripped block Jacobi matrix $\mathbb J_1=(S_+^*)^2 \mathbb J S_+^2$ and its Weyl matrix $\calR_1$. The Schur complement formula implies the coefficient stripping formula for block Jacobi matrices,  cf. \cite[Proposition 2.14]{DamPushnSim08}:

\begin{lemma}[Matrix coefficient stripping]
The Weyl matrices of $\mathbb J$ and its once stripped matrix $\mathbb J_1$ are related by

\begin{align}\label{eq:oncestripped}
\calR_1(z)=-A_0^{-1}(\calR(z)^{-1}-B_0+z)(A_0^*)^{-1}
\end{align}	
\end{lemma}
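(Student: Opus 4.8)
The plan is to read \eqref{eq:oncestripped} off the Schur complement (Feshbach) formula for the top-left corner of a resolvent, applied to the orthogonal decomposition $\ell^2(\bbN_0) = \cH_0 \oplus \cH_0^\perp$ in which $\cH_0 = \vspan\{\delta_0,\delta_1\}$ is the leading $2\times 2$ block. Let $P$ be the orthogonal projection onto $\cH_0$ and $Q = I - P$. With respect to this splitting, the tridiagonal block structure gives, for $z\in\bbC\setminus\bbR$,
\[
\mathbb{J}-z = \begin{pmatrix} B_0 - z & \Gamma \\ \Gamma^* & \mathbb{J}'-z \end{pmatrix},
\]
where $\mathbb{J}' = Q\mathbb{J}Q$ is the compression of $\mathbb{J}$ to $\cH_0^\perp$ and $\Gamma\colon \cH_0^\perp \to \cH_0$ is the off-diagonal coupling. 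Because the only block of $\mathbb{J}$ connecting $\cH_0$ to $\cH_0^\perp$ is the $(0,1)$ block, the map $\Gamma$ annihilates every block except the second, on which it acts by $A_0$; writing $\iota\colon\bbC^2\to\cH_0^\perp$ for the isometry onto the leading block of $\cH_0^\perp$, this reads $\Gamma = A_0\iota^*$ and $\Gamma^* = \iota A_0^*$.

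First I would record two identifications. By definition, $P(\mathbb{J}-z)^{-1}P$, written as a $2\times2$ matrix in the basis $\delta_0,\delta_1$, is exactly $\calR(z)$. Second, conjugation by $S_+^2$ is a unitary from $\ell^2(\bbN_0)$ onto $\cH_0^\perp$ carrying $\mathbb{J}_1 = (S_+^*)^2\mathbb{J}S_+^2$ to $\mathbb{J}'$ and the leading block of $\ell^2(\bbN_0)$ to the leading block of $\cH_0^\perp$; hence $\iota^*(\mathbb{J}'-z)^{-1}\iota = \calR_1(z)$.

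The core step is the Schur complement identity, which one can verify directly by solving $(\mathbb{J}-z)u = \delta_j$ for $j\in\{0,1\}$: decomposing $u = (u^{(0)},u')$ along $\cH_0\oplus\cH_0^\perp$, the lower block equation gives $u' = -(\mathbb{J}'-z)^{-1}\Gamma^* u^{(0)}$, and substituting into the upper block equation yields $P(\mathbb{J}-z)^{-1}P = \left[(B_0-z) - \Gamma(\mathbb{J}'-z)^{-1}\Gamma^*\right]^{-1}$ (cf.\ \cite[Proposition 2.14]{DamPushnSim08}). Inserting $\Gamma = A_0\iota^*$ and the two identifications above collapses this to
\[
\calR(z) = \left[(B_0-z) - A_0\,\calR_1(z)\,A_0^*\right]^{-1}.
\]
Inverting, rearranging to $\calR(z)^{-1} = B_0 - z - A_0\calR_1(z)A_0^*$, and using $\det A_0\neq 0$ to multiply by $A_0^{-1}$ on the left and $(A_0^*)^{-1}$ on the right then gives exactly \eqref{eq:oncestripped}.

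I expect the only genuinely delicate point to be the justification of the block resolvent manipulation in the unbounded setting: one must know that $\mathbb{J}'$ (equivalently $\mathbb{J}_1$) is itself self-adjoint on $\cH_0^\perp$, so that $(\mathbb{J}'-z)^{-1}$ exists for $z\in\bbC\setminus\bbR$ and the Schur complement is well-defined. This should follow because deleting the leading block is a finite-rank alteration that leaves the deficiency indices at infinity unchanged, so self-adjointness is inherited by $\mathbb{J}_1$; with that in hand the formal block inversion above is valid on the relevant domains. Everything else is routine linear algebra over $\bbC^2$.
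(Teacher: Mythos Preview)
Your proposal is correct and follows exactly the approach the paper takes: the paper simply invokes the Schur complement formula and cites \cite[Proposition 2.14]{DamPushnSim08} without spelling out the details you supply. Your added remarks on domains and self-adjointness of $\mathbb J_1$ are reasonable caveats, but the paper treats this as a known identity and does not elaborate further.
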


Our goal is to relate the following particular form of the $2\times 2$ blocks,

\begin{align}\label{eq:blockform}
A_j=\begin{pmatrix}
0&a_j\\a_j&0
\end{pmatrix},\quad B_j=\begin{pmatrix}
0&b_j\\b_j^*&0
\end{pmatrix}
\end{align}
to a particular symmetry of the Weyl matrix.

By general principles, $\cR$ is a matrix Herglotz function, i.e., it obeys
\[
\frac 1{2i} ( \cR(z) -  \cR(z)^* ) > 0, \qquad z\in \bbC \setminus \bbR.
\]
Thus, $\cR$ has an integral representation involving a $2\times 2$ matrix measure on $\bbR$; we are interested in the case when this matrix measure has a particular symmetry:

\begin{definition}\label{def:Herglotz}
We say a matrix valued Herglotz function $F$ is symmetric if it may be written as a Stieltjes transform of the following form:
\begin{align*}
F(z)=\int\limits_{\bbR}\begin{pmatrix}
1&\psi_o(x)\\\psi_o^*(x)&1
\end{pmatrix}\frac{d\nu_e(x)}{x-z}
\end{align*}
for $\nu_e$ an even probability measure on $\bbR$, and $\psi_o$ an odd function satisfying $|\psi |\leq 1$ $\nu_e$ almost everywhere.
\end{definition}

\begin{lemma}\label{lem:blocks}
Let the Weyl-Matrix $\calR$ corresponding to a block Jacobi matrix be symmetric in the sense of Definition~\ref{def:Herglotz}. Then it has the nontangential asymptotics in $\bbC_+:=\{z\in\bbC:\Im(z)>0\}$:
\begin{align}\label{eq:Rexpansion}
\calR(z)&= - \frac{\mathbb I}z -\frac{B_0}{z^2}-\frac{A_0A_0^*+B_0^2}{z^3}+\mathcal{O}(z^{-4}), \quad z \to \infty,\; \arg z \in [\alpha,\pi- \alpha]
\end{align}
for any $\alpha > 0$, and the first diagonal/off-diagonal entries have the form:
\begin{align*}
B_0=\begin{pmatrix}
0&b_0\\ b_0^*&0
\end{pmatrix}, \quad A_0=\begin{pmatrix}
0&a_0\\a_0&0
\end{pmatrix}U_1
\end{align*}
for $U_1$ a unitary matrix and 
\begin{align}\label{eq:b0a0moments}
b_0=\int\limits_{\bbR} x\psi_{o}(x)d\nu_{e}(x), \quad a_0=\sqrt{\int\limits_{\bbR} x^2d\nu_{e}(x)-|b_0|^2}>0.
\end{align}
\end{lemma}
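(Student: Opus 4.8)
The plan is to extract everything from the large-$z$ asymptotic expansion of the Stieltjes transform, reading off the blocks $B_0$ and $A_0A_0^*$ from the low-order moments, and then using the explicit symmetric form of $\nu_e$ and $\psi_o$ together with the parity (even/odd) symmetries to pin down the shapes of $B_0$ and $A_0$.

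First I would establish the asymptotic expansion \eqref{eq:Rexpansion}. Since $\cR$ is the Stieltjes transform of a matrix measure $d\mu = \begin{pmatrix} 1 & \psi_o \\ \psi_o^* & 1 \end{pmatrix} d\nu_e$ with all moments finite (the diagonal is $\nu_e$, a probability measure, and $|\psi_o| \le 1$ bounds the off-diagonal entries by the same measure, so finiteness of moments of $\nu_e$ — guaranteed by Lemma~\ref{lem:momentsfinite} via the correspondence to $|J|$ — gives finiteness of all matrix moments), I can expand $\frac{1}{x-z} = -\frac{1}{z}\sum_{k\ge 0} (x/z)^k$ for $|z|$ large in the stated sector, where $|x/z|$ is controlled nontangentially. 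This yields $\cR(z) = -\sum_{k\ge 0} z^{-(k+1)} \int x^k \, d\mu(x)$. The zeroth moment is $\mathbb{I}$ (the $\begin{pmatrix}1&0\\0&1\end{pmatrix}$ block since $\nu_e$ is a probability measure and $\int \psi_o \, d\nu_e = 0$ by oddness of $\psi_o$ against even $\nu_e$), giving the leading $-\mathbb{I}/z$. The justification of uniformity of the remainder in the sector $\arg z \in [\alpha, \pi - \alpha]$ is the standard nontangential estimate, controlling $|x - z| \gtrsim |z|$ away from the real axis.

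Next I would identify the coefficients with the abstract block-Jacobi expansion. On one hand, the general coefficient-stripping/resolvent expansion for a self-adjoint block Jacobi matrix gives $\cR(z) = -\mathbb{I}/z - B_0/z^2 - (A_0 A_0^* + B_0^2)/z^3 + \mathcal{O}(z^{-4})$ purely from $\cR_{ij}(z) = \langle \delta_i, (\bbJ - z)^{-1}\delta_j\rangle$ by expanding $(\bbJ - z)^{-1} = -z^{-1}\sum_k (\bbJ/z)^k$ and reading off $\langle \delta_i, \bbJ^k \delta_j\rangle$ for $k = 0, 1, 2$ using the tridiagonal block structure \eqref{eq:J0}; this is where the second moment produces $A_0 A_0^* + B_0^2$. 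Matching this against the Stieltjes expansion gives $B_0 = \int x\, d\mu(x) = \begin{pmatrix} 0 & \int x\psi_o \, d\nu_e \\ \int x \psi_o^* \, d\nu_e & 0 \end{pmatrix}$, where the diagonal entries vanish because $\int x\, d\nu_e = 0$ (odd integrand against even measure). This gives the claimed form $B_0 = \begin{pmatrix} 0 & b_0 \\ b_0^* & 0 \end{pmatrix}$ with $b_0 = \int x \psi_o \, d\nu_e$, and since $\psi_o$ is odd and $\nu_e$ even, this equals $\int_{\bbR} x \psi_o \, d\nu_e$ as stated. For $A_0$, matching the $z^{-3}$ coefficient gives $A_0 A_0^* = \int x^2 \, d\mu - B_0^2$; computing $\int x^2 d\mu$ (diagonal $= \int x^2 d\nu_e$, off-diagonal $= \int x^2 \psi_o \, d\nu_e = 0$ by oddness) and $B_0^2 = |b_0|^2 \mathbb{I}$ yields $A_0 A_0^* = (\int x^2 d\nu_e - |b_0|^2)\mathbb{I} = a_0^2 \mathbb{I}$ with $a_0$ as in \eqref{eq:b0a0moments}. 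Positivity $a_0 > 0$ follows because $\nu_e$ has infinite support, so the Cauchy–Schwarz-type inequality $\int x^2 d\nu_e > |\int x \psi_o d\nu_e|^2$ is strict (using $|\psi_o| \le 1$).

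Finally, from $A_0 A_0^* = a_0^2 \mathbb{I}$ I conclude $A_0 = a_0 V$ for some unitary $V$, and writing $a_0 V = \begin{pmatrix} 0 & a_0 \\ a_0 & 0 \end{pmatrix} U_1$ with $U_1 = \begin{pmatrix} 0 & 1 \\ 1 & 0 \end{pmatrix}^{-1} a_0^{-1} A_0$ unitary gives the stated factorization. \textbf{The main obstacle} I expect is not any single identity but the careful justification that the Stieltjes expansion and the abstract block-Jacobi resolvent expansion agree \emph{coefficient by coefficient} with uniform-in-sector remainders; in particular, one must ensure that the finiteness of moments genuinely transfers to the matrix measure (via $|\psi_o| \le 1$ dominating the off-diagonal by the diagonal $\nu_e$) so that the matrix moments $\int x^k \, d\mu$ are well-defined through $k = 2$, and that the nontangential control $|x - z| \ge c|z|$ in the sector $\arg z \in [\alpha, \pi-\alpha]$ makes the $\mathcal{O}(z^{-4})$ error uniform. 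The algebraic extraction of $B_0$ and $A_0 A_0^*$ is then routine, and the parity arguments killing the diagonal of $B_0$ and the off-diagonal of the second moment are the clean structural heart of the proof.
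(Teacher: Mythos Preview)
Your overall strategy---match the moment expansion of the Stieltjes transform against the block-Jacobi data, then use the even/odd parities of $\nu_e$ and $\psi_o$ to read off the shapes of $B_0$ and $A_0A_0^*$---is the same as the paper's. The algebraic extraction of $b_0$, $a_0$, and the polar decomposition of $A_0$ are identical.

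There is one genuine gap. You obtain the identification of the $z^{-2}$ and $z^{-3}$ coefficients with $B_0$ and $A_0A_0^*+B_0^2$ by writing $(\bbJ-z)^{-1}=-z^{-1}\sum_k(\bbJ/z)^k$. In this paper $\bbJ$ is \emph{unbounded}, so the Neumann series does not converge and this step is not valid as written. The paper sidesteps this by using the coefficient-stripping identity $\cR(z)=(B_0-z-A_0\cR_1(z)A_0^*)^{-1}$ together with the first-order asymptotic $\cR_1(z)=-\mathbb I/z+\mathcal O(z^{-2})$ for the once-stripped Weyl matrix; expanding this inverse yields \eqref{eq:Rexpansion} without any operator power series. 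Your route can be repaired: since $\delta_0,\delta_1\in D(\bbJ^k)$ for all $k$ (tridiagonality preserves $\ell^2_c$), the spectral theorem gives $\int x^k\,d\mu_{ij}=\langle\delta_i,\bbJ^k\delta_j\rangle$ directly, and the tridiagonal computation of $\langle\delta_i,\bbJ^k\delta_j\rangle$ for $k\le 2$ then yields $\mathbb I$, $B_0$, $A_0A_0^*+B_0^2$. But you should state it that way rather than invoke a Neumann expansion.

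A smaller point: your argument for $a_0>0$ appeals to infinite support of $\nu_e$, which is not among the hypotheses of the lemma (Definition~\ref{def:Herglotz} does not require it). The paper instead uses the standing hypothesis $\det A_0\neq 0$ on block Jacobi matrices, which immediately forces $a_0^2=\det(A_0A_0^*)^{1/2}\neq 0$. Also, invoking Lemma~\ref{lem:momentsfinite} for finiteness of moments is out of place here, since the lemma is stated for an abstract block Jacobi matrix, not one arising from a scalar $J$; finiteness of the matrix moments follows from $\bbJ^k\delta_j\in\ell^2_c$ as above.
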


\begin{proof}
In a sector $\arg z \in [\alpha,\pi -\alpha]$,  $\alpha > 0$, symmetry of $\cR$ leads to nontangential asymptotics as $z\to \infty$, 
\begin{align}\label{eq:Rfullexpand}
\cR(z) = - \sum_{j=0}^{n-1} z^{-j-1} \int\limits_{\bbR} x^j \begin{pmatrix}
1&\psi_{o}(x)\\\psi_{o}(x)^*& 1
\end{pmatrix}d\nu_e(x) + O(z^{-n-1})
\end{align}
due to  the geometric expansion
\[
\frac {-1}{x-z} = \sum_{j=0}^{n-1} \frac{x^j}{z^{j+1}} + \frac{x^n}{z^n(x-z)}
\]
and the estimate $\lvert x-z \rvert \ge \Im z \ge  \sin \alpha \lvert z \rvert$.

In particular, since $\cR_1$ is of this form, $\calR_1(z)=-\frac1z \mathbb I+\calO(1/z^2),\;z\to\infty$ for $\arg(z)\in [\alpha,\pi- \alpha]$ for some $\alpha>0$. Using the coefficient stripping formula \eqref{eq:oncestripped} in the form $\calR(z)=(B_0-z-A_0\calR_1(z)A_0^*)^{-1}$ gives
\begin{align*}
%\calR(z)&=(B_0-z-A_0\calR_1(z)A_0^*)^{-1}\\
\calR(z) & = -\frac1z\left(\mathbb I -\frac1z (B_0-A_0\calR_1(z)A_0^*)\right)^{-1}\\
&=-\frac1z\left(\mathbb I+\frac{B_0}{z}+\frac{A_0A_0^*+B_0^2}{z^2}+\mathcal{O}(1/z^3)\right).
\end{align*}
Equating coefficients, we see that 
\begin{align*}
B_0=\int\limits_{\bbR} x \begin{pmatrix}
1&\psi_{o}(x)\\\psi_{o}(x)^*& 1
\end{pmatrix}d\nu_e(x)=\begin{pmatrix}
0&b_0\\b_0^*&0
\end{pmatrix}
\end{align*}
where $b_0=\int\limits_{\bbR} x\psi_{o}(x)d\nu_{e}(x)$, since the measure is even. Similarly,  
\begin{align*}
A_0A_0^*&=\int\limits_{\bbR}x^2\begin{pmatrix}
1&\psi_{o}(x)\\\psi_{o}(x)^*& 1
\end{pmatrix}d\nu_e(x)-B_0^2=\begin{pmatrix}
a_0^2&0\\0&a_0^2
\end{pmatrix}
\end{align*}
for 
\begin{align*}
a_0^2:=\int\limits_{\bbR} x^2d\nu_e(x)-\left| \int\limits_{\bbR} x\psi_{o}(x)d\nu_{e}(x) \right|^2 \ge 0
\end{align*}
by the Cauchy-Schwarz inequality and since $\|\psi_{o}\|_\infty\leq 1$; we set $a_0$ to be the nonnegative square root of this quantity. 

Moreover, $\det A_0 \neq 0$ implies $a_0^2 \neq 0$. Thus, by the polar decomposition of $A_0$, there exists a unitary $U$ such that
\begin{align*}
A_0=\begin{pmatrix}
a_0&0\\0&a_0
\end{pmatrix}U=
\begin{pmatrix}
0&a_0\\a_0&0
\end{pmatrix}\begin{pmatrix} 0&1\\1&0\end{pmatrix}U
\end{align*}
and setting $U_1=( \begin{smallmatrix} 0&1\\1&0\end{smallmatrix}) U$ completes the proof.
\end{proof}

We record the following general facts about Stieltjes transforms of a complex measure. 
\begin{lemma}\label{lem:Cstieltjes} 
Let $\rho$ be a complex measure on $\bbR$ and let
\begin{align*}
m(z)=\int\limits_\mathbb R\frac{d\rho(x)}{x-z}, \quad z \in \bbC \setminus \bbR.
\end{align*}
Then, 

\begin{align}\label{eq:symmetry}
m(z)=\pm m(-z)\iff d\rho(x)=\mp d\rho(-x),
\end{align}
and 
\begin{align}\label{eq:reality}
m(z)^*=m(z^*)\iff d\rho(x)^*=d\rho(x).
\end{align}
%\end{align}
where $d\rho(-x)$ denotes the pushforward of $\rho$ under the map $x\mapsto -x$. 
\end{lemma}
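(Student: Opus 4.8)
The plan is to reduce both equivalences to two ingredients: an elementary transformation rule describing how $m$ responds when the measure $\rho$ is reflected (for \eqref{eq:symmetry}) or complex-conjugated (for \eqref{eq:reality}), together with the uniqueness of the Stieltjes transform, i.e.\ the fact that a finite complex measure on $\bbR$ is determined by the values $m(z)$, $z\in\bbC\setminus\bbR$. The uniqueness is the only substantive input; I would obtain it from the Stieltjes--Perron inversion formula applied to a complex measure, or equivalently from the statement that $m\equiv 0$ forces $\rho=0$.

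For \eqref{eq:symmetry}, I would first record the effect of reflection on $m$. Writing $\rho_-$ for the pushforward of $\rho$ under $x\mapsto -x$ (so that $d\rho_-(x)=d\rho(-x)$ and $\int f\,d\rho_- = \int f(-x)\,d\rho$), a direct substitution gives
\[
\int_{\bbR}\frac{d\rho_-(x)}{x-z}=\int_{\bbR}\frac{d\rho(x)}{-x-z}=-m(-z).
\]
Hence the Stieltjes transform of $\mp\rho_-$ equals $\pm m(-z)$, while that of $\rho$ equals $m(z)$. By linearity and uniqueness, the identity of measures $\rho=\mp\rho_-$, which is exactly $d\rho(x)=\mp d\rho(-x)$, holds if and only if $m(z)=\pm m(-z)$ for all $z\in\bbC\setminus\bbR$, proving \eqref{eq:symmetry}.

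For \eqref{eq:reality}, since $x$ is real I would conjugate under the integral sign, using $\overline{1/(x-z)}=1/(x-z^*)$, to obtain
\[
m(z)^*=\int_{\bbR}\frac{d\rho(x)^*}{x-z^*},
\]
which is the Stieltjes transform of the complex-conjugate measure $\rho^*$ evaluated at $z^*$. As $z$ ranges over $\bbC\setminus\bbR$ so does $z^*$, so the assertion $m(z)^*=m(z^*)$ for all such $z$ says precisely that $\rho^*$ and $\rho$ have the same Stieltjes transform; by uniqueness this is equivalent to $\rho^*=\rho$, i.e.\ $d\rho(x)^*=d\rho(x)$.

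The one place requiring care is the uniqueness of the Stieltjes transform for complex (rather than positive) measures; I expect this, rather than the algebraic manipulations above, to be the only real obstacle. It is classical and can be quoted from standard references; alternatively it follows from the Stieltjes inversion formula, which recovers $\rho$ from the boundary jump $\tfrac{1}{2\pi i}\bigl(m(x+i\varepsilon)-m(x-i\varepsilon)\bigr)$ as $\varepsilon\downarrow 0$.
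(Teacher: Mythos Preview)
Your proposal is correct and follows essentially the same approach as the paper: both arguments reduce each equivalence to an elementary change-of-variable/conjugation identity for the Stieltjes transform, combined with uniqueness via Stieltjes inversion for complex measures. The paper's proof is simply a terser version of what you wrote.
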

\begin{proof}
By Stieltjes inversion (see, e.g., \cite[Lemma 7.65]{LukicBook}), the Stieltjes transform uniquely determines the complex measure, so \eqref{eq:symmetry} follows from % the substitution
\begin{align*}
m(-z) = \int\limits_\bbR \frac{d\rho(x)}{x+z} = \int\limits_\bbR \frac{d\rho(-x)}{-x+z} = \int\limits_\bbR \frac{-d\rho(-x)}{x-z}. 
\end{align*}
Similarly, 
\begin{align*}
m(z)^* = \int\limits_\bbR \frac{d\rho(x)^*}{x-z^*} 
\end{align*}
so that by Stieltjes inversion again, we have \eqref{eq:reality}.
\end{proof}

We prove the following lemma, which, along with an inductive argument, will allow for the proof of Theorem~\ref{thm:LocalBorgMarchenko}:
\begin{proposition}\label{prop:stripping}
Suppose that the Weyl matrix $\calR$ corresponding to a block Jacobi matrix $\mathbb J$ is symmetric in the sense of Definition~\ref{def:Herglotz}. Suppose further that the first off-diagonal term $A_0$ is of the form \eqref{eq:blockform} for $a_0>0$.
Then the Weyl matrix $\calR_1$ corresponding to the once stripped block Jacobi matrix $\mathbb J_1=(S_+^*)^2 \mathbb J S_+^2$ satisfies 
\begin{align}\label{eq:strippedformula}
\calR_1(z)&=\frac{-1}{|a_0|^2}\begin{pmatrix}
\det(\calR(z))^{-1} \calR_{0,0}(z)+z&-\det(\calR(z))^{-1}\calR_{1,0}(z)-b_0^*\\-\det(\calR(z))^{-1}\calR_{0,1}(z)-b_0&\det(\calR(z))^{-1}\calR_{1,1}(z)+z
\end{pmatrix}.
\end{align}
In particular, $\calR_1$ is also symmetric.
\end{proposition}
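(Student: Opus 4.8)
My plan is to prove the proposition in two stages: first derive the explicit formula \eqref{eq:strippedformula} by feeding the special form of the coefficients into the coefficient stripping formula \eqref{eq:oncestripped}, and then verify that $\calR_1$ inherits symmetry by tracking how the defining symmetries of $\calR$ propagate through that formula.

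For the formula, I first note that since $\calR$ is symmetric, Lemma~\ref{lem:blocks} gives $B_0 = (\begin{smallmatrix}0&b_0\\b_0^*&0\end{smallmatrix})$, while by hypothesis $A_0 = (\begin{smallmatrix}0&a_0\\a_0&0\end{smallmatrix}) = a_0\sigma_1$ with $a_0>0$ and $\sigma_1 = (\begin{smallmatrix}0&1\\1&0\end{smallmatrix})$. Thus $A_0 = A_0^*$ and $A_0^{-1} = (A_0^*)^{-1} = a_0^{-1}\sigma_1$, so \eqref{eq:oncestripped} becomes $\calR_1(z) = -a_0^{-2}\sigma_1(\calR(z)^{-1} - B_0 + z)\sigma_1$. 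Conjugation by $\sigma_1$ simply reflects a $2\times 2$ matrix across its anti-diagonal (interchanging the two diagonal entries and interchanging the two off-diagonal entries), so expanding $\calR(z)^{-1}$ by the $2\times 2$ cofactor formula, subtracting $B_0$, adding $zI$, and applying this reflection yields \eqref{eq:strippedformula} after routine bookkeeping.

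For the symmetry, I will use the characterization that a matrix Herglotz function $F$ which is the Stieltjes transform of a matrix probability measure is symmetric in the sense of Definition~\ref{def:Herglotz} if and only if (i) $F_{0,0} = F_{1,1}$ and (ii) $F(-z) = -\sigma_3 F(z)\sigma_3$ with $\sigma_3 = \diag(1,-1)$. Applying Lemma~\ref{lem:Cstieltjes} entrywise, condition (ii) is exactly the statement that the diagonal measures are even and the off-diagonal measures odd; together with (i) this identifies the two diagonal measures with a common even probability measure $\nu_e$, and the Radon--Nikodym decomposition with respect to the trace (as in the proof of Lemma~\ref{prop:determinacy}) then writes the off-diagonal as $\psi_o\,d\nu_e$ with $\psi_o$ odd and, by positivity of the matrix measure, $\lvert\psi_o\rvert \le 1$. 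I then check (i) and (ii) for $\calR_1$: equality of the diagonal entries in \eqref{eq:strippedformula} is immediate from $\calR_{0,0} = \calR_{1,1}$; and since the symmetry of $\calR$ gives $\calR(-z) = -\sigma_3\calR(z)\sigma_3$---whence the diagonal entries of $\calR$ are odd in $z$, the off-diagonal entries even, and $\det\calR$ even---substituting these parities into \eqref{eq:strippedformula} and tracking signs yields $\calR_1(-z) = -\sigma_3\calR_1(z)\sigma_3$. As $\calR_1$ is the Weyl matrix of the (self-adjoint) stripped operator $\mathbb J_1$, it is automatically the Stieltjes transform of a matrix probability measure (probability normalization coming from the leading $-\mathbb I/z$ term of \eqref{eq:Rexpansion} applied to $\mathbb J_1$), so the characterization applies and $\calR_1$ is symmetric.

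The genuinely delicate step is the measure-theoretic half of the characterization---converting the two analytic symmetries into the structural form $(\begin{smallmatrix}1&\psi_o\\\psi_o^*&1\end{smallmatrix})\,d\nu_e$ with $\psi_o$ odd and $\lvert\psi_o\rvert\le 1$. Here the interplay between the even measure $\nu_e$ and the odd off-diagonal measure (forcing $\psi_o$ to be odd) and the use of positivity of the matrix measure (forcing $\lvert\psi_o\rvert\le 1$) require care, whereas the formula derivation and the sign-tracking are routine. I would also make explicit, or take as given from the self-adjointness of $\mathbb J$, that $\mathbb J_1$ is self-adjoint, so that the positivity and probability normalization of its matrix measure are available.
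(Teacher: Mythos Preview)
Your proposal is correct and follows essentially the same route as the paper's proof. Both derive \eqref{eq:strippedformula} by inserting the special forms of $A_0,B_0$ into \eqref{eq:oncestripped} and conjugating by $\sigma_1$; both then use Lemma~\ref{lem:Cstieltjes} to propagate the even/odd parities of the entries of $\calR$ through the formula to $\calR_1$, and both finish by invoking the Radon--Nikodym decomposition of the matrix measure for $\calR_1$ with respect to its trace to extract the $(\begin{smallmatrix}1&\psi_o\\\psi_o^*&1\end{smallmatrix})\,d\nu_e$ structure with $\lvert\psi_o\rvert\le 1$. The only difference is cosmetic: you package the parity conditions as the single identity $\calR_1(-z)=-\sigma_3\calR_1(z)\sigma_3$, while the paper works entry by entry; and you state the converse direction as a characterization, whereas the paper argues it directly.
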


\begin{proof}
Since $\mathcal R_1$ is a Weyl matrix for a block Jacobi matrix, it is the Stieltjes transform of a matrix measure $\mu$ satisfying $\mu(\bbR)_{i,j} = \langle \delta_j, \delta_i \rangle$ so $\mu(\bbR) = \mathbb I$. 

Decomposing $\mu$ with respect to the positive measure $\Tr(\mu)$, we may write $d\mu=W\, d\Tr(\mu)$ for $W:\bbR\to \bbC^2$, with $\Tr(W)=1$ and $W\geq 0$ $\Tr(\mu)$-a.e., so that 
\begin{align}\label{eq:StieltjesR1}
\mathcal R_1(z)=\int\limits_{\bbR}\frac{W(x)d\Tr(\mu)(x)}{x-z}
\end{align}
Using \eqref{eq:oncestripped} and denoting the entries of $\calR$ by $\calR_{i,j}$, we may use the form of $A_0$ and $B_0$ found in Lemma~\ref{lem:blocks} and denoting $\sigma_1=(\begin{smallmatrix}
0&1\\1&0
\end{smallmatrix})$, we may compute 
\begin{align*}
\calR_1(z)&=\frac{-\sigma_1}{a_0}\left(\det(\calR(z))^{-1}\begin{pmatrix}
\calR_{1,1}(z)&-\calR_{0,1}(z)\\-\calR_{1,0}(z)&\calR_{0,0}(z)
\end{pmatrix}-\begin{pmatrix}
0&b_0\\b_0^*&0
\end{pmatrix}+z\mathbb I \right) \frac{\sigma_1}{a_0^*}\\
&=\frac{-\sigma_1}{|a_0|^2}\begin{pmatrix}
\det(\calR(z))^{-1}\calR_{1,1}(z)+z&-\det(\calR(z))^{-1}\calR_{0,1}(z)-b_0\\-\det(\calR(z))^{-1}\calR_{1,0}(z)-b_0^*&\det(\calR(z))^{-1} \calR_{0,0}(z)+z
\end{pmatrix} \sigma_1\\
&=\frac{-1}{|a_0|^2}\begin{pmatrix}
\det(\calR(z))^{-1} \calR_{0,0}(z)+z&-\det(\calR(z))^{-1}\calR_{1,0}(z)-b_0^*\\-\det(\calR(z))^{-1}\calR_{0,1}(z)-b_0&\det(\calR(z))^{-1}\calR_{1,1}(z)+z
\end{pmatrix}
\end{align*}
yielding \eqref{eq:strippedformula}.

By \eqref{eq:symmetry} of Lemma~\ref{lem:Cstieltjes} and our assumption that $\calR$ is symmetric, the entries $\calR_{0,0}$ and $\calR_{1,1}$ are odd, and equal, while $\calR_{0,1}$ and $\calR_{1,0}$ are even.  Thus, the determinant of $\calR$ is even, and by our computation above, the diagonal entries of $\calR_{1}$ are odd and equal, while the off-diagonal entries are even. Thus, we may conclude by Lemma~\ref{lem:Cstieltjes} that the diagonal entries are Stieltjes transforms of an even complex measure $\nu_e$, while the off-diagonal entries are Stieltjes transforms of an odd complex measure. Comparing to \eqref{eq:StieltjesR1}, and Stieltjes inverting, we have
\begin{align*}
d\nu_e =W_{0,0} \, d\Tr(\mu) =W_{1,1} \, d\Tr(\mu),
\end{align*}
so that by the normalization $\Tr(W)=1$, we have $W_{0,0}=W_{1,1}=1/2$ $\Tr(\mu)$-a.e., and 
\begin{align*}
\nu_e= \tfrac 12 \Tr(\mu)
\end{align*} 
a probability measure since $\mu(\bbR) = \mathbb I$. Meanwhile, writing $W_{0,1}=W_{1,0}^*=\psi_o$, we see $|\psi_o |\leq 1$  $\nu_e$-a.e.\  since $W$ is positive definite $\Tr(\mu)$-a.e.
\end{proof}

\section{Coefficient stripping}\label{section:injectivity}

The central idea of \cite{PushnitskiStampach} is to embed the non-self-adjoint $J$ into a block matrix
\begin{equation}\label{eq:domain}
\bfJ= \begin{pmatrix}
0&J(a,b) \\
J(a,b^*)&0
\end{pmatrix}, \qquad D(\bfJ)=D(J(a,b^*))\oplus D(J(a,b)).  
\end{equation}

By the previous section, if $J = J(a,b)$ is proper, $\bfJ$ is self-adjoint on $\ell^2(\bbN_0) \oplus \ell^2(\bbN_0)$, and $\bfJ$ is the closure of the operator
\[
\mathbf J_0=\bfJ_0(a,b)=\begin{pmatrix}
0&J_0(a,b)\\ J_0(a,b^*)&0
\end{pmatrix}, \qquad D(\bfJ_0) = \ell^2_c(\bbN_0) \oplus \ell^2_c(\bbN_0).
\]
Moreover,  conjugation by the unitary map
\[
V:\ell^2(\bbN_0)\to \ell^2(\bbN_0)\oplus\ell^2(\bbN_0), \qquad 
V \delta_{2j}=\delta_j\oplus 0,\quad V \delta_{2j+1}=0\oplus \delta_j,
\]
turns this into $\mathbb J = V^* \bfJ V$ which has the form of a block Jacobi matrix \eqref{eq:J0}
with $2\times 2$ blocks of the form \eqref{eq:blockform}.
Denoting by  $\mathbb J_0$ the restriction of $\mathbb J$ to $\ell^2_c(\bbN_0)$, if $J$ is proper, then $\mathbb J$ is the closure of $\mathbb J_0$.

We derive the following relationships between the Weyl matrix $\calR$ for $\bbJ$, the quantities $M_{i,j}$ defined by \eqref{eq:distortedM}, and the spectral data $(\nu,\psi)$ of $J$:

\begin{proposition}\label{prop:RMrelationship}
For $z\in\bbC\setminus \bbR$, 
\begin{align}\label{eq:RM} 
\mathcal R(z)=\begin{pmatrix}
%\frac{1}{z}\left(-1+M_{1,1}(z^2)\right)
 z M_{0,0}(z^2) & M_{1,0}(z^2)\\ M_{0,1}(z^2)& z M_{0,0}(z^2)
\end{pmatrix}
\end{align}
and 
\begin{align}\label{eq:Rstructure}
\mathcal R(z)=\int\limits_{\bbR}\begin{pmatrix}
1&\psi_{o}(x)\\\psi_{o}(x)^*& 1
\end{pmatrix}\frac{d\nu_e(x)}{x-z}.
\end{align}

In particular, $(\nu,\psi)$ uniquely determine the matrix-valued Herglotz function $\mathcal{R}$. Moreover, the diagonal entries of $M$ are related by
\begin{equation}\label{eq:Mdiagonalentryidentity}
zM_{0,0}(z^2)=\frac1z(-1+M_{1,1}(z^2)).
\end{equation}
\end{proposition}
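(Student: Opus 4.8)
The plan is to compute the resolvent of the self-adjoint block operator $\bfJ$ in closed form and then read off the entries of $\cR$ through the unitary $V$. Since $\bfJ$ is self-adjoint with $\bfJ^2=\left(\begin{smallmatrix}JJ^* & 0\\ 0 & J^*J\end{smallmatrix}\right)$ block diagonal (using $J^*=J(a,b^*)$, which holds because $J$ is proper), and $JJ^*$, $J^*J$ are self-adjoint by von Neumann's theorem, the resolvent $(\bfJ^2-z^2)^{-1}$ is $\diag\!\big(R_{JJ^*}(z^2),R_{J^*J}(z^2)\big)$ for every $z\in\bbC\setminus\bbR$ (then $z^2\notin[0,+\infty)\supseteq\sigma(\bfJ^2)$). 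The functional-calculus identity $(\bfJ-z)^{-1}=(\bfJ+z)(\bfJ^2-z^2)^{-1}$ then gives
\begin{align*}
(\bfJ-z)^{-1}=\begin{pmatrix} z R_{JJ^*}(z^2) & J R_{J^*J}(z^2)\\ J^* R_{JJ^*}(z^2) & z R_{J^*J}(z^2)\end{pmatrix}.
\end{align*}
Because $V\delta_0=\delta_0\oplus 0$ and $V\delta_1=0\oplus\delta_0$, evaluating $\cR_{i,j}(z)=\langle V\delta_i,(\bfJ-z)^{-1}V\delta_j\rangle$ reduces each entry to an inner product against $\delta_0$. The bottom-right entry is immediately $z\langle\delta_0,R_{J^*J}(z^2)\delta_0\rangle=zM_{0,0}(z^2)$. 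For the top-left entry $z\langle\delta_0,R_{JJ^*}(z^2)\delta_0\rangle$, which is phrased through $JJ^*=\abs{J^*}^2$ rather than $J^*J=\abs{J}^2$, I will invoke the antiunitary equivalence implemented by complex conjugation $C\colon u\mapsto u^*$: since $CJC=J(a,b^*)=J^*$ (properness) we get $C\abs{J}^2C=\abs{J^*}^2$, and together with $C\delta_0=\delta_0$ this forces $\langle\delta_0,R_{JJ^*}(z^2)\delta_0\rangle=\langle\delta_0,R_{J^*J}(z^2)\delta_0\rangle=M_{0,0}(z^2)$, so both diagonal entries coincide. The off-diagonal entries are identified with the off-diagonal entries of $M(z^2)$ by moving $J$, $J^*$ across the resolvents via the commutation relations $JR_{J^*J}(w)=R_{JJ^*}(w)J$ and $J^*R_{JJ^*}(w)=R_{J^*J}(w)J^*$, which are the two halves of Lemma~\ref{lem:BorelJJstar} applied to $T=J$ with the bounded Borel function $f(s)=(s^2-w)^{-1}$; this yields \eqref{eq:RM}.

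For the Stieltjes representation \eqref{eq:Rstructure}, I will feed $f(s)=(s^2-z^2)^{-1}$ into the defining relations \eqref{eq:nudef} and \eqref{eq:phasedef}. This gives $M_{0,0}(z^2)=\int_{[0,+\infty)}(s^2-z^2)^{-1}\,d\nu(s)$, while $\langle\delta_0,JR_{J^*J}(z^2)\delta_0\rangle=\langle\delta_0,Jf(\abs{J})\delta_0\rangle=\int_{[0,+\infty)}\tfrac{s\psi(s)}{s^2-z^2}\,d\nu(s)$, and the transposed off-diagonal entry equals the same integral with $\psi$ replaced by $\psi^*$ (seen from $M_{1,0}(w)=\overline{M_{0,1}(\bar w)}$). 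Folding each half-line integral onto $\bbR$ using $d\nu_e=\tfrac12\,d\nu$ on $(0,+\infty)$, the odd extension $\psi_o$, and the elementary identities $\tfrac1{x-z}+\tfrac1{-x-z}=\tfrac{2z}{x^2-z^2}$ and $\tfrac1{x-z}+\tfrac1{x+z}=\tfrac{2x}{x^2-z^2}$, recovers exactly the matrix Stieltjes transform in \eqref{eq:Rstructure}.

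Finally, \eqref{eq:Mdiagonalentryidentity} reduces to the single algebraic identity $M_{1,1}(z^2)=1+z^2M_{0,0}(z^2)$. I will write $M_{1,1}(z^2)=\langle J^*\delta_0,R_{J^*J}(z^2)J^*\delta_0\rangle=\langle\delta_0,JR_{J^*J}(z^2)J^*\delta_0\rangle$, and then use $R_{J^*J}(z^2)J^*=J^*R_{JJ^*}(z^2)$ and $JJ^*R_{JJ^*}(z^2)=\mathbb{I}+z^2R_{JJ^*}(z^2)$ together with the diagonal identification $\langle\delta_0,R_{JJ^*}(z^2)\delta_0\rangle=M_{0,0}(z^2)$ from the first paragraph; rearranging gives \eqref{eq:Mdiagonalentryidentity}. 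The main obstacle I anticipate is not conceptual but the domain bookkeeping for the unbounded operators: justifying the block-diagonal form of $(\bfJ^2-z^2)^{-1}$, the passage of $J$ and $J^*$ across resolvents, and the adjoint moves $\langle\delta_0,J(\cdots)\rangle=\langle J^*\delta_0,(\cdots)\rangle$, each of which requires checking that the relevant vectors lie in $D(J)$, $D(J^*)$, or $D(J^*J)$. These checks are controlled by properness (so that $J^*=J(a,b^*)$ and $\bfJ$ is genuinely self-adjoint) together with Lemma~\ref{lem:BorelJJstar} and $R_{J^*J}(w)\delta_0\in D(J^*J)\subseteq D(J)$.
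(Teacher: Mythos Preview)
Your approach is correct and shares the paper's overall architecture: compute the block resolvent of $\bfJ$, read off the four entries of $\cR$ in terms of $M$, and unfold the half-line integrals via partial fractions into the full-line Stieltjes form \eqref{eq:Rstructure}. The technical routes diverge in a few places. You obtain $(\bfJ-z)^{-1}=(\bfJ+z)(\bfJ^2-z^2)^{-1}$ from the functional calculus of the self-adjoint $\bfJ$, using that $\bfJ^2=\diag(JJ^*,J^*J)$ on $D(JJ^*)\oplus D(J^*J)$; the paper instead writes out a Schur complement block inversion. Your formula makes the domain issues transparent (the resolvent lands in $D(\bfJ^2)$ automatically). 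You then obtain \emph{both} diagonal entries as $zM_{0,0}(z^2)$ directly, invoking the antiunitary $C$ to pass from $R_{JJ^*}$ to $R_{J^*J}$ in the $(0,0)$ slot; the paper first computes $\cR_{0,0}(z)=\tfrac1z(-1+M_{1,1}(z^2))$ and only afterwards appeals to \eqref{eq:Mdiagonalentryidentity}. Correspondingly, your proof of \eqref{eq:Mdiagonalentryidentity} is purely operator-theoretic, via $M_{1,1}(w)=\langle\delta_0,JJ^*R_{JJ^*}(w)\delta_0\rangle=1+w\langle\delta_0,R_{JJ^*}(w)\delta_0\rangle$, whereas the paper reads it off the integral representations together with $\nu(\bbR)=1$. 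Both proofs use the antiunitary conjugation $CJC=J^*$ and Lemma~\ref{lem:BorelJJstar}, just at different junctures. One small remark: your computation (and the paper's own proof) actually yields $\cR_{0,1}=M_{0,1}$ and $\cR_{1,0}=M_{1,0}$; the displayed matrix \eqref{eq:RM} has these indices transposed, which is a harmless typo since the off-diagonal entries land in the correct positions in \eqref{eq:Rstructure}.
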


\begin{proof}
From \eqref{eq:nudef}, \eqref{eq:phasedef} we obtain the following integral representations for the entries of $M$:
\begin{align*}
M_{0,0}(z^2)&=\langle \mathbf e_0,(J^*J-z^2)^{-1}\mathbf e_0\rangle=\langle \delta_0,(J^*J-z^2)^{-1}\delta_0\rangle\\
& =\int\limits_{[0,+\infty)} \frac{d \nu(x)}{x^2-z^2},
\end{align*}
and
\begin{align*}
M_{0,1}(z^2)&=\langle \mathbf e_1,(J^*J-z^2)^{-1}\mathbf e_0\rangle=\langle \delta_0,J(J^*J-z^2)^{-1}\delta_0\rangle \\
&=\int\limits_{[0,+\infty)} \frac{x\psi(x)d \nu(x)}{x^2-z^2}.
\end{align*}
and
\begin{align*}
M_{1,0}(z^2)&=\langle \mathbf e_0,(J^*J-z^2)^{-1}\mathbf e_1\rangle =\langle \delta_0,(J^*J-z^2)^{-1}J^*\delta_0\rangle \\
&=\langle \delta_0,J(J^*J-(z^2)^*)^{-1}\delta_0 \rangle^*=\int\limits_{[0,+\infty)} \frac{x \psi^*(x) d \nu(x)}{x^2-z^2}, 
\end{align*}

For the bottom right entry, we compute
\begin{align*}
M_{1,1}(z^2)&=\langle \mathbf e_1,(J^*J-z^2)^{-1}\mathbf e_1\rangle=\langle \mathbf \delta_0,J(J^*J-z^2)^{-1}J^*\delta_0\rangle\\
&=\langle \delta_0,(JJ^*-z^2)^{-1}JJ^*\delta_0\rangle 
\end{align*}
by Lemma~\ref{lem:BorelJJstar}. Denoting the anti-unitary operator $C:\ell^2(\mathbb N_0)\to \ell^2(\mathbb N_0)$, $C\psi=\psi^*$, we have $CJC=J^*$ as operators, so that since $C\delta_0=\delta_0$,
\begin{align*}
\langle \delta_0,(JJ^*-z^2)^{-1}JJ^*\delta_0\rangle &=\langle C\delta_0,(JJ^*-z^2)^{-1}JJ^*C\delta_0\rangle=\langle \delta_0,C(JJ^*-z^2)^{-1}CCJJ^*C\delta_0\rangle^*\\
&=\langle \delta_0,(J^*J-(z^*)^2)^{-1}J^*J\delta_0\rangle^*=\langle \delta_0,(J^*J-z^2)^{-1}J^*J\delta_0\rangle
\end{align*}
so that by \eqref{eq:nudef}, we have
\begin{align*}
&M_{1,1}(z^2)=\int\limits_{[0,+\infty)} \frac{x^2  d \nu(x)}{x^2-z^2}.
\end{align*}

By the above integral representations for $M_{0,0}$ and $M_{1,1}$, a direct verification using $\nu(\bbR) = 1$ gives \eqref{eq:Mdiagonalentryidentity}. 

We express the relevant entries of $(\mathbf J-z)^{-1}$ in terms of the $M_{i,j}$. By block matrix inversion, we have 
\begin{align*}
(\mathbf J-z)^{-1}\psi= \begin{pmatrix} -\frac1z+\frac1{z^2}J(1/z J^*J-z)^{-1}J^*& \frac1z J(1/zJ^*J-z)^{-1}\\
\frac1z(1/zJ^*J-z)^{-1}J^*& (1/zJ^*J-z)^{-1}
\end{pmatrix}\psi
\end{align*}
for $\psi\in D(\bfJ)$. Thus, with the notation $\delta_0^0:=\delta_0\oplus 0$, $\delta_0^1:=0\oplus \delta_0$, we compute
\begin{align*}
\calR_{0,0}(z) & = \langle\delta_0^0, (\mathbf J-z)^{-1}\delta_0^0 \rangle = -1/z+\langle J^*\delta_0,1/z(J^*J-z^2)^{-1}J^*\delta_0\rangle \\&=\frac1z(-1+\langle \mathbf{e}_1,(J^*J-z^2)^{-1}\mathbf{e}_1\rangle)
=\frac{1}{z}(-1+M_{1,1}(z^2)),
\end{align*}
and similarly, 
\begin{align*}
\calR_{0,1}(z)&=\langle \delta_0^0,(\mathbf J-z)^{-1}\delta_0^1 \rangle=\langle J^*\delta_0,(J^*J-z^2)^{-1}\delta_0\rangle\\
&=\langle \mathbf e_1,(J^*J-z^2)\mathbf e_0 \rangle =M_{0,1}(z^2).
\end{align*}
and 
\begin{align*}
\calR_{1,0}(z)&=\langle \delta_0^1,(\mathbf J-z)^{-1}\delta_0^0 \rangle=\langle \delta_0,1/z(1/zJ^*J-z^2)^{-1}J^*\delta_0\rangle\\
&=\langle\mathbf e_0,1/z(1/zJ^*J-z^2)^{-1}\mathbf e_1\rangle=M_{1,0}(z^2),
\end{align*}
and finally
\begin{align*}
\calR_{1,1}(z)&=\langle \delta_0^1,(\mathbf J-z)^{-1}\delta_0^1 \rangle=\langle \delta_0,(1/zJ^*J-z)^{-1}\delta_0\rangle\\
&=zM_{0,0}(z^2).
\end{align*}
After setting $\frac{1}{z}(-1+M_{1,1}(z^2))=zM_{0,0}(z^2)$, this gives \eqref{eq:RM}. Using a partial fraction decomposition on the integral representations for $M_{i,j}$ gives \eqref{eq:Rstructure}. By Stieltjes inversion, the data $(\nu,\psi)$ uniquely determines $\mathcal R$.

\end{proof}

By the previous lemma, nontagential asymptotics for $M$ and $\cR$ may be related. We note that we have $M(w)=O(1/\sqrt{w})$ nontangentially in $\mathbb C\setminus \mathbb R$, as may be seen either through the relationship to $\calR$ just proven, or through the integral identities for $M_{i,j}$ derived above.

\begin{lemma}\label{lem:RvMsim}
Fix sequences $z_j$ with $\arg(z_j)\in [\epsilon,\pi-\epsilon]$ for an $\epsilon>0$, and $w_j\in \bbC\setminus[0,+\infty)$ with $w_j^{1/2}=z_j$. Let $\bbJ$ and $\tilde\bbJ$ be two block Jacobi matrices whose coefficients have the form \eqref{eq:blockform}, and denote by $\cR, \tilde \cR$ be their Weyl matrix functions, and let $M,\tilde M$ be defined by \eqref{eq:distortedM} corresponding to $V\mathbb J V^*=\mathbf J,V\tilde{\mathbb{J}} V^*=\tilde{\mathbf{J}}$ respectively. Then, 
\begin{align}\label{eq:Msim}
\left \|w_j^{\frac{\sigma_3}{4}}\left(M(w_j)-\tilde M(w_j)\right)w_j^{\frac{\sigma_3}{4}}\right\|=\mathcal{O}(w_j^{-N}), \quad j\to\infty
\end{align}
for an $N\in \bbN$ if and only if 
\begin{align}\label{eq:Rsim}
\|\cR(z_j)-\tilde \cR(z_j)\|=\mathcal{O}(z_j^{-2N}), \quad j\to\infty.
\end{align}
\end{lemma}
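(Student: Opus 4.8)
The plan is to reduce the claimed equivalence to an entrywise comparison using the explicit relationship \eqref{eq:RM} between $\cR$ and $M$, together with the conjugation by the diagonal matrix $w^{\sigma_3/4}$. The key observation is that the substitution $w_j = z_j^2$ (equivalently $z_j = w_j^{1/2}$) couples the rate $\calO(w_j^{-N})$ on the $M$ side with $\calO(z_j^{-2N})$ on the $\cR$ side, since $w_j^{-N} = z_j^{-2N}$. So the content of the lemma is really that the conjugation by $w_j^{\sigma_3/4}$ on the $M$-difference exactly compensates for the various powers of $z$ appearing in \eqref{eq:RM}, turning the weighted $M$-difference into the unweighted $\cR$-difference up to the change of variables.

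First I would write out $\cR(z) - \tilde\cR(z)$ entrywise using \eqref{eq:RM}. The diagonal entries are $z(M_{0,0}(z^2) - \tilde M_{0,0}(z^2))$, while the off-diagonal entries are $M_{1,0}(z^2) - \tilde M_{1,0}(z^2)$ and $M_{0,1}(z^2) - \tilde M_{0,1}(z^2)$. Next I would compute the conjugated difference on the $M$ side explicitly. Writing $D := M(w) - \tilde M(w)$ and using $w^{\sigma_3/4} = \diag(w^{1/4}, w^{-1/4})$, conjugation multiplies the $(i,j)$ entry of $D$ by $w^{(1-2i)/4} \cdot w^{(1-2j)/4}$; concretely, the $(0,0)$ entry is scaled by $w^{1/2} = z$, the $(1,1)$ entry by $w^{-1/2} = z^{-1}$, and the off-diagonal entries $(0,1)$ and $(1,0)$ are left unscaled by $w^{1/4} \cdot w^{-1/4} = 1$. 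Comparing with the entrywise formula for $\cR - \tilde\cR$, the off-diagonal entries match directly, and the $(0,0)$ entry of the conjugated $M$-difference is exactly the diagonal entry $z(M_{0,0}(z^2) - \tilde M_{0,0}(z^2))$ of $\cR - \tilde\cR$.

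The one step requiring slight care is the $(1,1)$ entry. On the $\cR$ side the lower-right entry is again $zM_{0,0}(z^2)$ by \eqref{eq:RM}, but the conjugated $M$-difference has $(1,1)$ entry equal to $z^{-1}(M_{1,1}(w) - \tilde M_{1,1}(w))$. Here I would invoke the diagonal identity \eqref{eq:Mdiagonalentryidentity}, namely $zM_{0,0}(z^2) = z^{-1}(-1 + M_{1,1}(z^2))$, which holds for both $M$ and $\tilde M$; subtracting, the constant $-1$ cancels and one gets $z(M_{0,0}(z^2) - \tilde M_{0,0}(z^2)) = z^{-1}(M_{1,1}(z^2) - \tilde M_{1,1}(z^2))$. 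This shows the two expressions for the $(1,1)$ (and $(0,0)$) diagonal entries agree, so in fact
\begin{align*}
\cR(z) - \tilde\cR(z) = w^{\frac{\sigma_3}{4}}\left(M(w) - \tilde M(w)\right)w^{\frac{\sigma_3}{4}}, \qquad w = z^2,
\end{align*}
as an exact identity of matrices, not merely an asymptotic one.

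With this identity in hand the equivalence is immediate: taking norms and using $w_j^{-N} = z_j^{-2N}$, the bound \eqref{eq:Msim} holds if and only if \eqref{eq:Rsim} holds, with the same implied constant. The only remaining point to confirm is the compatibility of the sector conditions: $\arg(z_j) \in [\epsilon, \pi - \epsilon]$ corresponds under squaring to $\arg(w_j) \in [2\epsilon, 2\pi - 2\epsilon]$, which keeps $w_j$ in $\bbC \setminus [0,+\infty)$ and matches the branch convention for $w^{\sigma_3/4}$ (so that $w^{1/2} = z$ with the stated argument ranges), and the norm equivalence of matrices makes the entrywise estimates and the operator-norm estimates interchangeable. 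I do not anticipate a genuine obstacle here; the main point is simply to carry out the conjugation bookkeeping correctly and to use \eqref{eq:Mdiagonalentryidentity} to reconcile the two diagonal entries, after which the equivalence follows from an exact matrix identity.
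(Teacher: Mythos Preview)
Your proposal is correct and follows essentially the same route as the paper's proof: both compute the conjugated difference $w^{\sigma_3/4}(M-\tilde M)w^{\sigma_3/4}$ entrywise, invoke the identity \eqref{eq:Mdiagonalentryidentity} to reconcile the $(1,1)$ diagonal entry, and conclude that this matrix agrees with $\cR(z)-\tilde\cR(z)$, so the two norms are literally equal and the asymptotic equivalence follows from $w_j^{-N}=z_j^{-2N}$.
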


\begin{proof}
By \eqref{eq:RM} and the relation \eqref{eq:Mdiagonalentryidentity}, 
\begin{align*}
\|\cR(z_j)-\tilde \cR(z_j)\|&=\left\|\begin{pmatrix}
z_j(M_{0,0}(z_j^2)-\tilde M_{0,0}(z_j^2))&M_{0,1}(z_j^2)-\tilde M_{0,1}(z_j^2)\\
M_{1,0}(z_j^2)-\tilde M_{1,0}(z_j^2)&\frac{1}{z_j}(M_{1,1}(z_j^2)-\tilde M_{1,1}(z_j^2))
\end{pmatrix}  \right\|\\
&=\left\|\begin{pmatrix}
w_j^{1/2}(M_{0,0}(w_j)-\tilde M_{0,0}(w_j))&M_{0,1}(w_j)-\tilde M_{0,1}(w_j)\\
M_{1,0}(w_j)-\tilde M_{1,0}(w_j)&\frac{1}{w_j^{1/2}}(M_{1,1}(w_j)-\tilde M_{1,1}(w_j))
\end{pmatrix}  \right\|\\
&=\left \|w_j^{\frac{\sigma_3}{4}}\left(M(w_j)-\tilde M(w_j)\right)w_j^{\frac{\sigma_3}{4}}\right\|.
\end{align*}
Thus, \eqref{eq:Msim} holds if and only if \eqref{eq:Rsim} is satisfied.  
\end{proof}

The following lemma allows for the computation of $A_n$ and $B_n$ through inductive coefficient stripping, and is the final lemma needed before our proof of Theorem~\ref{thm:LocalBorgMarchenko}.

\begin{lemma}\label{lem:inductivecrank}
Let $\bbJ$ and $\tilde\bbJ$ be two block Jacobi matrices whose coefficients have the form \eqref{eq:blockform}, and let $\cR, \tilde \cR$ be their Weyl matrix functions. Fix a sequence $z_j\to\infty$ as $j\to\infty$ nontangentially in $\bbC_+$. Then, for $k \ge 2$ we have
\begin{align}\label{eq:0107simbase}
\lVert \cR(z_j) - \tilde\cR(z_j) \rVert = \mathcal{O}(z_j^{-k-2}),\quad j\to\infty 
\end{align} 
if and only if $A_0 = \tilde A_0$, $B_0 = \tilde B_0$, and 
\begin{align}\label{eq:0107strippedbase}
\lVert \cR_1(z_j) - \tilde\cR_1(z_j) \rVert = \mathcal{O}(z_j^{-k}),\quad j\to\infty.
\end{align}

\end{lemma}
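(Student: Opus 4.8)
The plan is to reduce both implications to the coefficient stripping formula \eqref{eq:oncestripped} together with the nontangential expansion \eqref{eq:Rexpansion}. First I would record two quantitative facts used repeatedly along the sequence $z_j\to\infty$: from \eqref{eq:Rexpansion} one has $\calR(z_j)=-\mathbb{I}/z_j+\mathcal{O}(z_j^{-2})$, so $\lVert \calR(z_j)\rVert=\mathcal{O}(z_j^{-1})$, and since $z_j\calR(z_j)\to-\mathbb{I}$ the matrix $\calR(z_j)$ is invertible for all large $j$ with $\lVert \calR(z_j)^{-1}\rVert=\mathcal{O}(z_j)$; identical bounds hold for $\tilde\calR$. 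These are the only inputs the estimates require.

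For the direction assuming $A_0=\tilde A_0$, $B_0=\tilde B_0$ and \eqref{eq:0107strippedbase}, I would write \eqref{eq:oncestripped} in the inverted form $\calR(z)=(B_0-z-A_0\calR_1(z)A_0^*)^{-1}$. Since the coefficients agree, the two matrices being inverted differ only by $-A_0(\calR_1(z)-\tilde\calR_1(z))A_0^*$, and the second resolvent identity turns this into
\[
\calR(z)-\tilde\calR(z)=\calR(z)\,A_0\,(\calR_1(z)-\tilde\calR_1(z))\,A_0^*\,\tilde\calR(z).
\]
Inserting the bound $\mathcal{O}(z_j^{-1})$ for each of the two outer factors and the hypothesis $\mathcal{O}(z_j^{-k})$ for the middle factor produces \eqref{eq:0107simbase}, since $1+k+1=k+2$.

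For the converse I would start from \eqref{eq:0107simbase}, which because $k\ge 2$ is $\mathcal{O}(z_j^{-4})$, and subtract the two expansions \eqref{eq:Rexpansion}:
\[
\calR(z)-\tilde\calR(z)=-\frac{B_0-\tilde B_0}{z^2}-\frac{(A_0A_0^*+B_0^2)-(\tilde A_0\tilde A_0^*+\tilde B_0^2)}{z^3}+\mathcal{O}(z^{-4}).
\]
Multiplying by $z_j^2$ and letting $j\to\infty$ forces $B_0=\tilde B_0$; multiplying instead by $z_j^3$ then forces $A_0A_0^*=\tilde A_0\tilde A_0^*$, and because $A_0,\tilde A_0$ have the form \eqref{eq:blockform} with positive entry $a_0$, this upgrades to $A_0=\tilde A_0$. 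With the coefficients now matched, I would apply \eqref{eq:oncestripped} directly to both matrices and subtract, so the constant terms cancel:
\[
\calR_1(z)-\tilde\calR_1(z)=A_0^{-1}\,\calR(z)^{-1}\big(\calR(z)-\tilde\calR(z)\big)\tilde\calR(z)^{-1}\,(A_0^*)^{-1},
\]
where I used the second resolvent identity on $\calR(z)^{-1}-\tilde\calR(z)^{-1}$. Bounding the two inverse factors by $\mathcal{O}(z_j)$ and the central difference by \eqref{eq:0107simbase} gives $\mathcal{O}(z_j)\cdot\mathcal{O}(z_j^{-k-2})\cdot\mathcal{O}(z_j)=\mathcal{O}(z_j^{-k})$, which is \eqref{eq:0107strippedbase}.

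The computations are essentially mechanical once invertibility and the paired bounds $\lVert \calR(z_j)\rVert=\mathcal{O}(z_j^{-1})$, $\lVert \calR(z_j)^{-1}\rVert=\mathcal{O}(z_j)$ are in place; the step deserving care is the extraction of $A_0=\tilde A_0$ and $B_0=\tilde B_0$ in the converse. This is precisely where the hypothesis $k\ge 2$ enters: the threshold exponent $k+2\ge 4$ is exactly what is needed to annihilate both the $z^{-2}$ and $z^{-3}$ coefficients of the expansion and thereby pin down $B_0$ and then $A_0A_0^*$. I expect this to be the only genuinely delicate point; the rest is bookkeeping with the resolvent identity and the two norm bounds.
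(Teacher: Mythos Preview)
Your argument is correct and follows essentially the same route as the paper: both directions rest on the coefficient stripping identity \eqref{eq:oncestripped}, the second resolvent identity, and the paired bounds $\lVert \calR(z_j)\rVert=\mathcal{O}(z_j^{-1})$, $\lVert \calR(z_j)^{-1}\rVert=\mathcal{O}(z_j)$ coming from \eqref{eq:Rexpansion}. The only cosmetic difference is that you extract $A_0=\tilde A_0$ directly from $A_0A_0^*=\tilde A_0\tilde A_0^*$ via the explicit form \eqref{eq:blockform} with $a_0>0$, whereas the paper phrases this through a polar decomposition $A_0=\tilde A_0 U$ and then argues $U=\mathbb{I}$; these are equivalent.
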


\begin{proof}
Suppose \eqref{eq:0107simbase} holds for a $k\geq 2$. Then, we have
\begin{align*}
\lim_{j\to\infty}z_j^\ell (\calR(z_j)- \tilde \calR (z_j))=0, 
\end{align*}
for $\ell \leq 3$. Thus, by \eqref{eq:Rexpansion} of Lemma~\ref{lem:blocks}, $\tilde B_0= B_0$, and
\begin{align*}
A_0=\tilde A_0U,
\end{align*}
for $U$ a unitary matrix. Since $A_0,\tilde A_0$ have the form \eqref{eq:blockform} for $a_0,\tilde a_0>0$, this implies $U=I$ and $A_0=\tilde A_0$, after multiplying by $\tilde A_0^{-1}$. Thus, since 
\begin{align*}
\|\calR_1(z)-\tilde\calR_1(z)\|&\leq \| A_0\|\|A_0^{-1}\|\| \calR(z)^{-1}-\tilde\calR(z)^{-1}\|\\
&\leq \| A_0\|\|A_0^{-1}\|\| \calR(z)^{-1}\|\|\tilde\calR(z)^{-1}\| \|\calR(z)-\tilde\calR(z)\|
\end{align*}
by the second resolvent identity, and
\begin{align*}
\calR(z_j)^{-1}, \tilde\calR(z_j)^{-1}=-z_j \mathbb{I} +\mathcal O(1),\quad j\to \infty,
\end{align*}
by \eqref{eq:Rexpansion}, we may conclude \eqref{eq:0107strippedbase}.

Similarly, for the converse, we have after rearranging \eqref{eq:oncestripped} and using the assumption that $A_0 = \tilde A_0$, $B_0 = \tilde B_0$,
\begin{align*}
\|\calR(z_j)-\tilde\calR(z_j)\|&= \|(A_0\calR_1(z_j)A_0^*+B_0-z_j)^{-1}-(A_0\tilde \calR_1(z_j)A_0^*+B_0-z_j)^{-1}\|\\
&=\mathcal{O}(z_j^{-2})\|\calR_1(z_j)-\tilde \calR_1(z_j)\|
\end{align*}
by the second resolvent identity, and since 
\begin{align*}
(A_0\calR_1(z_j)A_0^*+B_0-z_j)^{-1},(A_0\tilde \calR_1(z_j)A_0^*+B_0-z_j)^{-1}=\mathcal{O}(1/z_j), j\to\infty.
\end{align*}
Thus, by \eqref{eq:0107strippedbase}, we have that \eqref{eq:0107simbase} holds along the same sequence.

\end{proof}

We may now prove Theorem~\ref{thm:LocalBorgMarchenko}. Both statements make use of the relation given by Proposition~\ref{prop:RMrelationship}, and induction using Lemma~\ref{lem:inductivecrank}.

\begin{proof}[Proof of Theorem~\ref{thm:LocalBorgMarchenko}]
Let $J,\tilde J$ be proper and denote the corresponding functions by $M, \tilde M$, and denote by $\calR$ and $ \tilde \calR$ the Weyl-matrices for 
\begin{align}\label{eq:1230}
V^*\bfJ V=\bbJ, \quad V^*\tilde \bfJ V= \tilde \bbJ,
\end{align}
respectively. We will repeatedly use that by Lemma~\ref{lem:RvMsim}, the equation \eqref{eq:asymptotic} is equivalent to
\begin{align}\label{eq:basecase}
\|\calR(z_j)-\tilde \calR(z_j)\|=\mathcal{O}(z_j^{-2N})
\end{align}
with $z_j=w_j^{1/2}$ and $\arg(z_j)\in [\epsilon,\pi-\epsilon]$ for some $\epsilon>0$. 

\eqref{item:fixedsim}$\implies$\eqref{item:coeffeq}. We proceed by induction on $N$. For the base case $N=2$, by the equivalence of \eqref{eq:asymptotic} and \eqref{eq:basecase}, and Lemma~\ref{lem:inductivecrank}, we have
\begin{equation}\label{eq:ab0}
A_0=\tilde A_0, B_0=\tilde B_0
\end{equation}
so that $a_0=\tilde a_0$, $b_0=\tilde b_0$.
For the inductive step, we assume the claim holds at $N\geq 2$, in other words \eqref{eq:asymptotic} holding along a fixed sequence with suitable argument implies equality of the first $N-1$ parameters. Now suppose
\begin{align}
\left\| w_j^{\frac{\sigma_3}{4}}  \left( M(w_j)- \tilde M(w_j)  \right)w_j^{\frac{\sigma_3}{4}} \right\|=\mathcal{O}(w_j^{-N-1}),\; j\to \infty.
\end{align}
along a fixed sequence with $w_j\to \infty$ and $\arg(w_j)\in [\epsilon,2\pi-\epsilon]$ for some $\epsilon>0$. Then, by Lemma~\ref{lem:RvMsim}, 
\begin{align*}
\|\calR(z_j)-\tilde \calR(z_j)\|=\mathcal{O}(z_j^{-2N-2})
\end{align*}
for $z_j\in \mathbb C_+$, $z_j^2=w_j$. Thus, by Lemma~\ref{lem:inductivecrank}, we have \eqref{eq:ab0}, so that $a_0=\tilde a_0$ and $b_0=\tilde b_0$, and 
\begin{align*}
\|\calR_1(z_j)-\tilde \calR_1(z_j)\|=\mathcal{O}(z_j^{-2N}).
\end{align*}
Now, using the equivalence given by Lemma~\ref{lem:RvMsim}, we have \eqref{eq:asymptotic} for $\mathbf J$ and $\tilde {\mathbf{J}}$ formed through the once stripped matrices $ J_1=S_+^* JS_+$ and $\tilde {J}_1=S_+^*\tilde{ J}S_+$, so that by the inductive hypothesis, $a_n=\tilde a_n$ and $b_n=\tilde b_n$ for $1\leq n\leq N-1$, completing the proof.

%and
%\begin{align}\label{eq:step121}
%\|\calR_1(z_j)-\tilde \calR_1(z_j)\|=\mathcal{O}(z_j^{-2N+2}).
%\end{align}
%By applying the inductive assumption to the once-stripped matrices, \eqref{eq:step121} implies \begin{align}\label{eq:0107}
%A_n=\tilde A_n,\;B_n=\tilde B_n,  \quad 1\leq n\leq N-2.
%\end{align}
%Now \eqref{eq:ab0},\eqref{eq:0107} imply $a_n=\tilde a_n$, $b_n=\tilde b_n$ for $0 \le n\leq N-2$.

\eqref{item:coeffeq}$\implies$\eqref{item:simall} is also proved by induction on $N$. For $N=2$, we note that $a_0=\tilde a_0$ and $b_0=\tilde b_0$ implies \eqref{eq:ab0}, so that by \eqref{eq:Rexpansion} 
\begin{align*}
\|\calR(z)-\tilde{\calR}(z)\|=\mathcal O(z^{-4}),
\end{align*}
and the equivalence of Lemma~\ref{lem:RvMsim} proves the base case. We now assume desired implication holds at $N$: equality of the first $N-1$ coefficients in a proper Jacobi matrix implies \eqref{eq:asymptotic1} for the corresponding $M$. 

Let $J$ and $\tilde J$ be proper and suppose $a_n=\tilde{a}_n$, $b_n=\tilde{b}_n$ for $0\leq n\leq N-1$. We denote by $J_1$ and $\tilde J_1$ the once stripped Jacobi matrices
\begin{align*}
J_1=S_+^*JS_+,\;\tilde J_1=S_+^*\tilde{J}S_+
\end{align*}
and we define 
\begin{align*}
\mathbf J_1=\begin{pmatrix}
0&J_1\\J_1^*&0
\end{pmatrix},\;\tilde {\mathbf J_1}=\begin{pmatrix}
0&\tilde J_1\\ \tilde J_1^*&0
\end{pmatrix}
\end{align*} 

Then, applying the inductive hypothesis to $\tilde{\mathbf{J}}_1$ and $\mathbf J_1$, along with the equivalence given by Lemma~\ref{lem:RvMsim} yields
\begin{align}\label{eq:step121b}
\|\calR_1(z)-\tilde \calR_1(z)\|=\mathcal{O}(z^{-2N}),
\end{align}
for $z\to\infty$ non tangentially in $\mathbb C_+$. Together with \eqref{eq:ab0} and Lemma~\ref{lem:inductivecrank}, we have
\begin{align}\label{eq:basesim}
\|\cR(z)-\tilde \cR(z)\|=\mathcal{O}(z^{-2N-2})
\end{align}
as $z\to \infty$ nontangentially in $\bbC_+$. So, by Lemma~\ref{lem:RvMsim} again, we see that 
\begin{align*}
\left\|w^{\frac{\sigma_3}{4}}\left(M(w)-\tilde M(w)\right)w^{\frac{\sigma_3}{4}}\right\|=\mathcal{O}(w^{-N-1}), \quad w \to\infty
\end{align*}
nontangentially in $\bbC\setminus[0,+\infty)$, as required.

\eqref{item:simall}$\implies$\eqref{item:fixedsim} is trivial.
\end{proof}

With this theorem in hand, the injectivity part of the proof of Theorem~\ref{thm:inverse} is quick. Using the work of \cite{PushnitskiStampach}, non-degeneracy of the measure $\mu$ follows. The proof of surjectivity proceeds in two steps: first, given a completely determinate measure, we we use right orthogonal polynomials to construct a block Jacobi matrix, which by our hypothesis, corresponds to a unique operator that is the unique self-adjoint extension guaranteed by complete determinacy. Since this operator is only defined up to equivalence class, where the equivalence is given by conjugation by a block diagonal unitary matrix, the second step ensures this equivalence class contains a matrix of the correct form. 

% making use of the following classical result of Krein: a matrix measure is determinate if and only if the associated block Jacobi matrices have deficiency indices $(0,0)$, we may prove Theorem~\ref{thm:inverse}. The proof makes use of right orthogonal polynomials, cf. \cite{DamPushnSim08}.

\begin{proof}[Proof of Theorem~\ref{thm:inverse}] %(\ref{thm:inverseunique})]
If $J$ is a proper Jacobi matrix, $\nu$ has infinite support and finite moments by Lemma~\ref{lem:momentsfinite}. By \cite[Lemma 2.1]{DamPushnSim08}, the definition of non-degeneracy \eqref{eq:nondeg} is equivalent to the definition used in \cite{PushnitskiStampach}. Thus, by \cite[Lemma 6.1]{PushnitskiStampach}, $\mu$ is non-degenerate. Finally, $\mu$  is the spectral measure for a block Jacobi matrix with deficiency indices $(0,0)$, so that $\mu$ is completely determinate.

Let $J,\tilde J$ be proper and assume they correspond to the same matrix measure. 
Then, $M(z)=\tilde M(z)$, so  \eqref{eq:asymptotic} holds for all $N$ and $J$ and $\tilde J$ have the same Jacobi parameters, so that $J =\tilde  J$ by Theorem~\ref{thm:LocalBorgMarchenko}.  

It remains to prove surjectivity, i.e., that every $2\times 2$ completely determinate matrix measure with the right symmetries corresponds to a proper Jacobi matrix $J$. Let $(\nu,\psi)$ be such that the corresponding $\mu$ is completely determinate. Using non-degeneracy, we may consider a sequence of right orthonormal matrix polynomials $p_n$ with respect to the measure 
\begin{align*}
d\mu(x):=\begin{pmatrix}
1&\psi_o(x)\\\psi_{o}(x)^*&1
\end{pmatrix}d\nu_{e}(x).
\end{align*}
Using the recurrence relation satisfied by the $p_n$, 
\begin{align}\label{eq:pnrecursion}
xp_n(x)=p_{n-1}(x)A_{n-1}^*+p_{n}(x)B_n+p_{n+1}(x)A_n, n\in \bbN_0
\end{align}
with the convention that $p_{-1}(x)=0$, we may assemble a block Jacobi matrix $\bbJ_0(A,B)$ defined on $\ell^2_c(\bbN_0)$ corresponding these coefficients. We note that $\mathbb J$ is only uniquely determined up to equivalence class, where the equivalence is given by conjugation by a block diagonal unitary matrix. By complete determinacy, $\bbJ_0(A,B)$ has deficiency indices $(0,0$ and is thus essentially self-adjoint, and we denote its self-adjoint extension by $\bbJ=\ol{\bbJ_0(A,B)}$.

We first claim that $d\mu$ is the spectral measure for $\bbJ$:  Let $d\tilde\mu$ be the spectral measure for $\bbJ$: 
\begin{align*}
\langle \delta_{j},(\bbJ-z)^{-1}\delta_k\rangle =\int\limits_{\bbR}\frac{1}{x-z}d\tilde \mu_{j,k}(x)
\end{align*}
for $ 0\leq j,k\leq 1$ and $\;z\in\bbC\setminus \bbR$. Then, by the spectral theorem, there is a unitary map

\begin{align*}
\mathcal{U}: \ell^2(\bbN_0)\to L^2(\bbR^2;\bbC^{2}, d \tilde \mu),
\end{align*}
with $\mathcal{U}(\delta_j)=e_j,\;0\leq j\leq 1$ and
\begin{align*}
(\calU\mathbb J \psi)(x)=x(\calU\psi)(x)
\end{align*}
for any $\psi\in D(\bbJ)$. In particular, since 
\begin{align*}
\bbJ\ell^2_c(\bbN_0)\subseteq \ell^2_c(\bbN_0)\subseteq D(\bbJ ),
\end{align*}
we have
\begin{align}\label{eq:powers1}
(\calU\mathbb J^n \psi)(x)=x^n(\calU\psi)(x)
\end{align} 
for any $\psi\in \ell^2_c(\bbN_0)$ and $n\in \bbN_0$.

In what follows, it will be convenient to identify $\ell^2(\bbN_0)$ and $\ell^2(\bbN_0;\bbC^2)$ with the basis
\begin{align*}
\hat\delta_{j,k}=\delta_j e_{k},\;j\in \bbN_0, k\in \{0,1\}.
\end{align*}
We may construct a sequence of orthonormal polynomials by examining $\calU(\hat{\delta}_{n,k})$ for $k\in \{0,1\}$. We note that by the expression for the $\bbJ$:
\begin{align*}
(\bbJ f)_n=A_{n-1}^*f_{n-1}+B_nf_n+A_{n+1}f_{n+1},\;f\in \ell^2(\bbN_0;\bbC^2), A_{-1}:=0,
\end{align*}
and since the $A_n$ are invertible, we have
\begin{align*}
\linspan\{\hat\delta_{j,k}: 0\leq j\leq n,0\leq k\leq 1 \}=\linspan\{ \bbJ^{j}\hat{\delta}_{0,k}:0\leq k\leq 1,0\leq j\leq n\},
\end{align*}
so that equation \eqref{eq:powers1} implies $\calU(\hat{\delta}_{n,k})$ are polynomials of degree at most $n$. Furthermore, by unitarity of the $\calU$, we have
\begin{align}\label{eq:orthonormality1}
\langle \calU(\hat\delta_{n,k}), \calU(\hat\delta_{m,j})\rangle=\begin{cases} 1,&n=m,j=k\\
0,&\text{otherwise}
\end{cases}
\end{align}
so that the matrix polynomial $q_{n}$ made up of the columns $\calU(\hat\delta_{n,0}),\calU(\hat\delta_{n,1})$,
\begin{align*}
q_{n}(x)_{0\leq j,k\leq 1}:=e_{j}^{*}\calU(\hat\delta_{n,k})
\end{align*}
are orthonormal. We relate them to the entries of $\bbJ_{nm}$ with respect to the $\hat\delta$ basis:
\begin{align*}
(\bbJ_{nm})_{0\leq j,k\leq 1}&=\langle \hat{\delta}_{n,j},\bbJ \hat{\delta}_{m,k}\rangle
=\langle q_{n}(x)e_j,xq_{m}(x)e_k\rangle,
\end{align*}
so that 
\begin{align*}
\bbJ_{n,m}=\int\limits_{\bbR} q_{n}(x)^{*}d\tilde \mu(x)xq_{m}(x).
\end{align*}
Thus, the $q_n$ satisfy the same recursive equation \eqref{eq:pnrecursion} as the $p_n$, and since  $q_0=p_0=\mathbb I$, we have $q_n\equiv p_n$.

Since the $\{p_k:0\leq k\leq n\}$ form a basis for the polynomials of degree at most $n$, we may write for each $n\in \bbN_0$,
\begin{align*}
x^n\mathbb I=\sum_{j=0}^nC_{j,n}p_j(x)
\end{align*}
with a unique choice of $C_{j,n}$. Then, 
\begin{align*}
\int\limits_{\bbR}x^nd\tilde \mu(x)=\int\limits_{\bbR}x^nd\mu(x)=C_{0,n}.
\end{align*}
Thus, by determinacy of the measure $d\mu$, we have $d\mu=d\tilde \mu$.

We now show that the representative $\mathbb J$ may be chosen to have blocks of the form \eqref{eq:blockform} by inductively selecting blocks
\begin{align*}
W_j\in \mathbb C^{2\times 2}, W_jW_j^*=\mathbb I,
\end{align*}
in the block diagonal unitary matrix 
\begin{align*}
W=\begin{pmatrix}
W_0&0&0&\dots\\
0&W_1&0&\dots\\
0&0&W_2&\dots\\
\vdots&\vdots&\vdots&\ddots
\end{pmatrix},
\end{align*}
so that 
\begin{align*}
W^*\mathbb J(A_n,B_n)W=\mathbb J(A_n',B_n')=:\bbJ'
\end{align*}
is a matrix of the correct form.

For the base case, we note that by Lemma~\ref{lem:blocks}, the entries $A_0$ and $B_0$ have the form 
\begin{align*}
A_0=\begin{pmatrix}
0&a_0\\
a_0&0
\end{pmatrix}U_1, \;B_0=\begin{pmatrix}
0&b_0\\
b_0^*&0
\end{pmatrix}, a_0>0,
\end{align*}
for $U_1$ a unitary matrix in $\mathbb C^{2\times 2}$, so that selecting $W_0=\mathbb I$ and $W_1=U_1^*$ ensures $A_0'$ and $B_0'$ are of the correct form \eqref{eq:blockform}. 

We note that $W_0,\dots, W_n$ only effect the first $n$ block coefficients: $A_j,B_j$ for $0\leq j\leq n$. We suppose now that $W_0,\dots,W_{n+1}$ are selected so that $A_j',B_j'$ are of the form \eqref{eq:blockform} for $0\leq j\leq n$. We show that we may select $W_{n+2}$ so that $A_{n+1}'$ and $B_{n+1}'$ are of the correct form. By Proposition~\ref{prop:stripping} and the inductive hypothesis, the Weyl-matrix $\calR_n$ corresponding to the $n$-times stripped matrix $\mathbb J_n':=(S_+^*)^{2n}\mathbb J'S_+^{2n}$ is also symmetric. Applying Lemma~\ref{lem:blocks} to $\mathbb J_n'$, we see 
\begin{align*}
A_{n+1}'=\begin{pmatrix}
0&a_{n+1}'\\
a_{n+1}'&0
\end{pmatrix}U_{n+2}, \;B_{n+1}'=\begin{pmatrix}
0&b_{n+1}'\\
(b_{n+1}')^*&0
\end{pmatrix}, a_{n+1}'>0,
\end{align*}
for $U_{n+2}$ a unitary matrix in $\mathbb C^{2\times 2}$, so that we may select $W_{n+2}=U_{n+2}^*$. So, by induction, we may select $W$ so that the equivalence class of $\mathbb J$ contains a matrix of the correct form.

%By Lemma~\ref{lem:blocks} we may select $U_1=\mathbb I$ and take $A_0,B_0$ to have the form
%\begin{align*}
%A_0=\begin{pmatrix}
%0&a_0\\
%a_0&0
%\end{pmatrix}, \;B_0=\begin{pmatrix}
%0&b_0\\
%b_0^*&0
%\end{pmatrix}.
%\end{align*}
%for $a_0>0$. By Proposition~\ref{prop:stripping}, and induction we may choose the remaining blocks to be of the form \eqref{eq:blockform} for $j\geq 0$ and with $a_j>0$. 

Then, denoting the above sequences $a=(a_n')_{n\in\bbN_0}, b=(b_n')_{\bbN_0}$, we have 
\begin{align*}
V\bbJ' V^*=\bfJ=\begin{pmatrix}0&J(a,b)\\J(a,b^*)&0 \end{pmatrix},
\end{align*}
so that $(\nu,\psi)$ is the spectral data of $J(a,b)$ by Proposition~\ref{prop:RMrelationship}.
\end{proof}

For the proof of Theorem~\ref{thm:continuity}, we use a well-known result allowing for the conclusion of strong resolvent convergence for self-adjoint operators from convergence on a common core, along with a characterization of pointwise convergence of Herglotz functions in terms of the weak convergence of their measures. On the inverse side, the conditions \eqref{eq:weak} give pointwise convergence of the relevant Herglotz functions, which only corresponds to convergence of certain matrix elements of the relevant resolvent. This falls well short of strong resolvent convergence, making statements about convergence of the coefficients $a_n,b_n$ out of reach.

\begin{proof}[Proof of Theorem~\ref{thm:continuity}]
Suppose $J^N:=J(a^{N},b^{N})$ and $J:=J(a^\infty,b^\infty)$ are proper and $J^N\to J$ in the sense of \eqref{eq:strong}. As before, the operators 
\begin{align*}
\bfJ^N=\begin{pmatrix}
0&J^N\\
(J^N)^*&0
\end{pmatrix},  \quad 
\bfJ=\begin{pmatrix}
0&J\\
J^* &0
\end{pmatrix}
\end{align*}
are self-adjoint with domains given by \eqref{eq:domain}, with a common core $\ell^2_c(\bbN_0)\oplus \ell^2_c(\bbN_0)$. Then, by the assumption \eqref{eq:strong}, for $\psi\in \ell^2_c(\bbN_0)\oplus \ell^2_c(\bbN_0)$,
\begin{align*}
\bfJ^N\psi\to \bfJ\psi,
\end{align*} 
so that $\bfJ^N\to \bfJ$ in strong-resolvent sense. Thus, we have convergence of the Herglotz functions $\calR,\calR^N$ corresponding to $\bbJ=V^*\bfJ V,\bbJ^N=V^*\bfJ^N V$:
\begin{align*}
\calR^N(z)\to \calR(z),\quad z\in\bbC\setminus \bbR
\end{align*}
where by \eqref{eq:Rstructure},
\begin{align}\label{eq:1021one}
\mathcal R^N(z)=\int\limits_{\bbR}\begin{pmatrix}
1&\psi_{o}^N(x)\\\psi_{o}^N(x)^*& 1
\end{pmatrix}\frac{d\nu_e^N(x)}{x-z}=:\int\limits_{\bbR}W_N(x)\frac{d\nu_e^N(x)}{x-z}
\end{align}
and
\begin{align}\label{eq:1021two}
\mathcal R(z)=\int\limits_{\bbR}\begin{pmatrix}
1&\psi_{o}(x)\\\psi_{o}(x)^*& 1
\end{pmatrix}\frac{d\nu_e(x)}{x-z}=:\int\limits_{\bbR}W(x)\frac{d\nu_e(x)}{x-z}.
\end{align}

This implies convergence of the scalar Herglotz functions
\begin{align*}
v^{\,*}\calR^N(z)v\to v^{\,*}\calR(z)v,
\end{align*}
for any $v\in \bbC^2$ and $z\in \bbC\setminus \bbR$.  Thus, by \cite[Proposition 7.28]{LukicBook} we have the weak convergence,
\begin{align}\label{eq:weakconvergence}
&\lim_{N\to\infty}\int\limits_\bbR h(x)v^{\,*}W_N(x)v d\nu^N(x)= \int\limits_{\bbR}h(x)v^{\,*}W(x)v d\nu(x)
\end{align} 
for any $h\in C_0(\bbR)$. Specializing to $v =e_1$ in \eqref{eq:weakconvergence} and using \eqref{eq:1021one}, \eqref{eq:1021two} then gives
\begin{align*}
&\lim_{N\to\infty}\int\limits_\bbR h(x)d\nu^N(x)= \int\limits_{\bbR}h(x)d\nu(x).
\end{align*} 

Meanwhile, applying the polarization identity to the sesquilinear forms $u^{\,*} W_Nv$, $u^{\,*} Wv $ gives the convergence 
\begin{align*}
&\lim_{N\to\infty}\int\limits_\bbR h(x)e_1^{\,*}W_N(x)e_2d\nu^N(x)= \int\limits_{\bbR}h(x)e_1^{\,*}W(x)e_2d\nu(x)\\
&\iff \lim_{N\to\infty}\int\limits_\bbR h(x)\psi_o^N(x)d\nu^N(x)= \int\limits_{\bbR}h(x)\psi_o(x)d\nu(x)
\end{align*}
for any $h\in C_0(\bbR)$, establishing convergence in the sense of \eqref{eq:weak}.
\end{proof}

Finally, we prove Theorem~\ref{thm:specialclasses}. The proofs of \eqref{itm:self-adjointness} and \eqref{itm:freecriterion} are similar, each relying on \eqref{eq:phasedef} and Stieltjes inversion for the forward direction, and an inductive argument using coefficient stripping for the converse. 

\begin{proof}[Proof of Theorem~\ref{thm:specialclasses}]

\eqref{itm:self-adjointness}: Suppose $J=J^*$. Then, $f(|J|)$ and $J$ commute for $f\in \calB(\bbR)$, so that for $z\in \bbC\setminus \bbR$, 
\begin{align*}
\langle \delta_0,J R_{|J|}(z)\delta_0\rangle^*=\langle \delta_0,JR_{|J|}(z^*)\delta_0\rangle  
\end{align*}
so that by \eqref{eq:phasedef}, we have
\begin{align*}
\left(\int\limits_{[0,\infty)}\frac{s\psi(s)}{s-z}d\nu(s) \right)^*=\int\limits_{[0,\infty)}\frac{s\psi(s)}{s-z^*}d\nu(s).
\end{align*}
Then, by \eqref{eq:reality} of Lemma~\ref{lem:Cstieltjes}, we have 
\begin{align*}
s\psi(s)^*d\nu(s)=s\psi(s)d\nu(s)
\end{align*}
and $\psi(s)=\psi(s)^*$ for $\nu$ almost every $s>0$. Finally, $\psi(0)=0$ by our normalization, yielding the claim.

Suppose now $\Im(\psi(s))=0$ for $\nu$-a.e. $s\geq 0$. It suffices to show $b_n\in \bbR$ for all $n\geq 0$. We proceed by induction on $n$.

Let $\bbJ=V^*\bfJ V$ for $\bfJ$ defined as in \eqref{eq:domain} and let $\calR$ be the associated Weyl-matrix.  The base case follows immediately from the formula for $b_0$ given by \eqref{eq:b0a0moments} of Lemma~\ref{lem:blocks} applied to $\bbJ$. Suppose now $b_j\in \bbR$ for $0\leq j\leq n$. For $j\geq 1$, denote by $\bfJ_{j}$ the matrix 
\begin{align*}
\bfJ_{j}=\begin{pmatrix}
0&J_j\\ J_j^*&0
\end{pmatrix}
\end{align*}

for $J_j=(S_+^*)^j JS_{+}^j$, and denote by 
\begin{align*}
\bbJ_{j}=(S_+^*)^{2j}\bbJ S_+^{2j}=V^*\bfJ_{j}V.
\end{align*}
Finally, denote by $\calR_{j}$ the Weyl matrix corresponding to $\bbJ_{j}$. By Proposition~\ref{prop:RMrelationship}, we may write 
\begin{align}\label{eq:strippedRj}
\mathcal R_{n+1}(z)=\int\limits_{\bbR}\begin{pmatrix}
1&\psi_{o}^{n+1}(x)\\\psi_{o}^{n+1}(x)^*& 1
\end{pmatrix}\frac{d\nu_e^{n+1}(x)}{x-z}
\end{align}
for $(\nu^{n+1},\psi^{n+1})$ corresponding to $(S_+^*)^{n+1} JS_{+}^{n+1}$. Since $b_n\in \bbR$ for $0\leq j\leq n$, by induction and the formula \eqref{eq:strippedformula}, the entries of $\calR_{n+1}$ satisfy
\begin{align*}
(\calR_{n+1}(z))_{i,j}^*=(\calR_{n+1}(z^*))_{i,j}.
\end{align*}
In particular, applying \eqref{eq:reality} to $(\calR_{n+1}(z))_{0,1}$ and $(\calR_{n+1}(z))_{1,0}$, we see $\Im (\psi^{n+1}_{o}(s))=0$ for $\nu_e^{n+1}$ almost every $s\in \bbR$. Applying Lemma~\ref{lem:blocks}, \eqref{eq:b0a0moments} to $\bbJ_{n+1}$ then gives $b_{n+1}\in \bbR$.

\eqref{itm:freecriterion}: Suppose $b_n=0$ for all $n\geq 0$. Let $\Omega$ be the unitary diagonal operator with coefficients 
\begin{align*}
\langle \delta_j,\Omega \delta_k\rangle=(-1)^k\delta_{j,k},\quad j,k\geq 0.
\end{align*}

Since $b_n\equiv 0$, we have $\Omega J\Omega=-J$, as well as $J=J^*$. Thus, for $z\in \bbC\setminus \bbR$, 
\begin{align*}
\Omega (J^*J-z)\Omega=\Omega J^2\Omega-z=J^2-z,
\end{align*}
so that $\Omega R_{J^*J}(z)\Omega=R_{J^*J}(z)$. Then, since $\Omega$ is unitary and satisfies $\Omega\delta_0=\delta_0$, we have 
\begin{align*}
\langle \delta_0,J R_{J^*J}(z)\delta_0\rangle=\langle \Omega \delta_0,\Omega JR_{J^*J}(z)\Omega\delta_0\rangle  =-\langle \delta_0,JR_{J^*J}(z)\delta_0\rangle 
\end{align*}
so that by \eqref{eq:phasedef}, 
\begin{align*}
\int\limits_{[0,+\infty)} \frac{s\psi(s)}{s^2-z}d\nu(s)=0,\quad \forall z\in \bbC\setminus \bbR.
\end{align*}
Making the change of variables $\phi(s)= \sqrt{s}$, we have 
\begin{align*}
\int\limits_{[0,+\infty)} \frac{s\psi(s)}{s^2-z}d\nu(s)=\int\limits_{[0,+\infty)} \frac{\sqrt{s}\psi(\sqrt{s})}{s-z}d\phi^{-1}_*\nu(s)=0 
\end{align*}
for all $z\in \bbC\setminus \bbR$. By Stieltjes inversion, we have $\sqrt{s}\psi(\sqrt{s})d\phi_*^{-1}\nu=0$, so that 
\begin{align*}
0=\phi^{-1}_*\nu(\{s>0:\psi(\sqrt{s})\ne 0\})=\nu(\{t>0:\psi(t)=0\}
\end{align*}
and $\psi(s)= 0$ for $\nu$-a.e. $s>0$. Since $\psi(0)=0$ by normalization, the claim is proved.

The converse follows similarly to the backwards direction of \eqref{itm:self-adjointness} immediately above, and we borrow notation from there. Suppose $\psi(s)=0$ for a.e. $s\geq 0$. We have $b_0=0$ by \eqref{eq:b0a0moments} of Lemma~\ref{lem:blocks} applied to $\bbJ$. Suppose now $b_j=0$ for $0\leq j\leq n$. Since $\psi(s)=0$ for $\nu$-a.e. $s\geq 0$, we see by Proposition \ref{prop:RMrelationship},
\begin{align*}
\calR_{0,1}(z)=\calR_{1,0}(z)=0,\quad z\in \bbC\setminus \bbR.
\end{align*}
Since $b_n=0$ for $0\leq j\leq n$, by induction and the formula \eqref{eq:strippedformula}, the entries of $\calR_{n+1}$ satisfy
\begin{align*}
(\calR_{n+1}(z))_{0,1}=(\calR_{n+1}(z))_{1,0}=0.
\end{align*}
Stieltjes inversion and \eqref{eq:strippedRj} then yields $\psi_o^{n+1}(s)=0$ for $\nu_e^{n+1}$-a.e. $s>0$. Applying Lemma~\ref{lem:blocks}, \eqref{eq:b0a0moments} to $\bbJ_{n+1}$ then gives $b_{n+1}=0$, and by induction, the claim is proven.

\end{proof}

\bibliographystyle{amsplain}
\bibliography{lit}

\end{document}